\theoremstyle{plain}
\newtheorem*{cor*}{Corollary}
\newtheorem*{prop*}{Proposition}
\newtheorem*{thm*}{Theorem}
\newtheorem*{notation*}{Notation}
\newtheorem{thm}{Theorem}[section]
\newtheorem{cor}[thm]{Corollary}
\newtheorem{prop}[thm]{Proposition}
\newtheorem{fact}[thm]{Fact}
\newtheorem{claim}[thm]{Claim}
\theoremstyle{definition}
\newtheorem{dfn}[thm]{Definition}
\newtheorem{rem}[thm]{Remark}
\numberwithin{equation}{subsection}
\newcommand{\forkindep}[1][]{%
  \mathrel{
    \mathop{
      \vcenter{
        \hbox{\oalign{\noalign{\kern-.3ex}\hfil$\vert$\hfil\cr
              \noalign{\kern-.7ex}
              $\smile$\cr\noalign{\kern-.3ex}}}
      }
    }\displaylimits_{#1}
  }
}  
\providecommand{\customgenericname}{}
\newcommand{\newcustomtheorem}[2]{%
  \newenvironment{#1}[1]
  {%
   \renewcommand\customgenericname{#2}%
   \renewcommand\theinnercustomgeneric{##1}%
   \innercustomgeneric
  }
  {\endinnercustomgeneric}
}
\title{Definably compact groups definable in real closed fields.{II}}
\author{Eliana Barriga}
\address{ Eliana Barriga\\ Universidad de los Andes, Colombia\\
University of Haifa, Israel}
\email{el.barriga44@uniandes.edu.co}
\keywords{O-minimality, semialgebraic groups, real closed fields, algebraic groups, locally definable groups, o-minimal universal cover}
\subjclass[2010]{03C64; 20G20; 22E15; 03C68; 22B99}
\begin{document}

\begin{abstract}

We continue the analysis of definably compact groups definable in a real closed field $\mathcal{R}$. In \cite{BADefComp-I}, we proved that for every definably compact definably connected semialgebraic group $G$ over $\mathcal{R}$ there are a connected $R$-algebraic group $H$, a definable injective map $\phi$ from a generic definable neighborhood of the identity of $G$ into the group $H\left(R\right)$ of $R$-points of $H$ such that $\phi$ acts as a group homomorphism inside its domain. The above result and our study of locally definable covering homomorphisms for locally definable groups combine to prove that if such group $G$
is in addition abelian, then its o-minimal universal covering group
$\widetilde{G}$ is definably isomorphic, as a locally definable group,
to a connected open locally definable subgroup of the o-minimal
universal covering group $\widetilde{H\left(R\right)^{0}}$ of the
group $H\left(R\right)^{0}$ for some connected $R$-algebraic group
$H$.
\end{abstract}

\maketitle

\section{Introduction}

This is the second paper of two papers studying definably compact groups definable in real closed fields.

This paper offers a description of the semialgebraically connected
semialgebraic groups over a sufficiently saturated real closed field $R$ through the study
of their o-minimal universal covering groups (see Def. \ref{D:DefUnivCov})
and of their relation with the $R$-points of some connected $R$-algebraic
group.

We establish a connection between the o-minimal universal covering
groups of an abelian definably compact definably connected group definable
in $\mathcal{R}$ and of the semialgebraically connected component
$H\left(R\right)^{0}$ of the group $H\left(R\right)$ of $R$-points
of some connected $R$-algebraic group $H$. More precisely we show the following.

\begin{customthm}{\ref{T:Goal1}}
\textit{Let $R$ be a sufficiently saturated real closed field. Then, the o-minimal universal covering group of an abelian definably compact
definably connected group definable in $\mathcal{R}$ is an open
locally definable subgroup of the o-minimal universal covering group
of the semialgebraically connected component $H\left(R\right)^{0}$
of the group $H\left(R\right)$ of $R$-points of some connected $R$-algebraic
group $H$.}
\end{customthm}

To prove this result we apply Theorem 5.1 of the first paper (\cite{BADefComp-I}) and some results on locally definable covering homomorphisms of locally definable groups proved in this paper. Theorem 5.1 in \cite{BADefComp-I} asserts that for every definably compact definably connected semialgebraic group $G$ over $\mathcal{R}$ there are a connected $R$-algebraic group $H$ and a definable local homomorphism $\phi$ from a generic definable neighborhood of the identity of $G$ into the group $H\left(R\right)$ of $R$-points of $H$, where by a local homomorphism we mean the following.

\begin{dfn}\label{D:homomoloc}
Let $G_{1}$ and $G_{2}$ be two topological groups, $X\subseteq G_{1}$ a neighborhood of the identity of $G_{1}$, and $\phi:X\rightarrow G_{2}$ a map. $\phi$ is called a \textit{local homomorphism} if $x,y,xy\in X$ implies $\phi\left(xy\right)=\phi \left(x\right)\phi\left(y\right)$. We say that an injective map $\phi:X\subseteq G_{1}\rightarrow G_{2}$ is a \textit{local homomorphism in both directions} if $\phi:X\rightarrow G_{2}$ and $\phi^{-1}:\phi\left(X\right)\rightarrow X$ are local homomorphisms.
\end{dfn}

The strategy to prove Theorem \ref{T:Goal1} is to use the $R$-algebraic group $H$ and the local homomorphism given by \cite[Theorem 5.1]{BADefComp-I} to define a locally definable map $\overline{\theta}$ from some open locally definable subgroup of the o-minimal universal covering group of $H\left(R\right)$ to $G$ such that $\overline{\theta}$ works as the o-minimal universal covering group of $G$.

This research is part of my PhD thesis at the Universidad de los Andes, Colombia and University of Haifa, Israel.

\subsection{The structure of the paper}

In Section \ref{S:2} we regard the locally definable spaces, ld-spaces, and their ld-covering maps as are defined in \cite{BarOter}. The $\bigvee$-definable groups in $\mathcal{M}$ are examples of such
spaces. We show that any ld-covering map between ld-spaces is closed for definable subspaces (Prop. \ref{P:covisclosed}). The o-minimal universal covering homomorphism of a connected locally definable group
is introduced and studied in Section \ref{S:3}. Sections \ref{S:4} and \ref{S:5} investigate the connection between abelian definably generated groups, existence of generic definable sets, convex sets,
and covers of definable groups. We prove in Proposition \ref{P:ImpProp} the existence of a convex set inside a definable generic subset of an abelian $\bigvee$-definable group $\mathcal{U}$ with $\mathcal{U}^{00}$.
This is a crucial fact in the construction of a well defined covering map in Theorem \ref{T:Z_t-f}.

In the last two sections we develop some results on local homomorphisms and their extensions to locally definable homomorphisms. Finally, in Section \ref{S:7}, by means of Theorems \ref{T:UnivCovAbLD} and  \cite[Theorem 5.1]{BADefComp-I}, we prove the main result of this paper: Theorem \ref{T:Goal1}.

\begin{notation*}\label{N:}
Our notation and any undefined term that we use from model theory, topology, or algebraic geometry are generally standard. For a group $G$ whose group operation is written multiplicatively, we use the following notation $\prod_{n}X=\underset{n\textrm{-times}}{\underbrace{X\cdot\ldots\cdot X}}$,
and $X^{n}=\left\{ x^{n}:x\in X\right\} $ for any $n\in\mathbb{N}$.
\end{notation*}

\section{Ld-spaces and ld-covering maps}\label{S:2}

\textit{From now until the end of this paper, unless stated otherwise, we work over a sufficiently saturated o-minimal expansion of a real closed field $R$, where by a \textit{sufficiently saturated structure} we mean a $\kappa$-saturated structure for some sufficiently large cardinal $\kappa$.}

In \cite{BarOter} Baro and Otero introduced the locally definable category, which extends the locally semialgebraic one introduced by Delfs and Knebusch in \cite{DK} and is more flexible than the $\bigvee$-definable group category.  $\bigvee$-definable groups are examples of locally definable spaces and their locally definable covering homomorphisms are locally definable covering maps of locally definable spaces. Following, we will introduce some definitions of the locally definable category from \cite{BarOter}, and then we will prove some results on locally definable covering maps that will be applied later in the study of the locally definable covering homomorphisms of locally definable groups.

\subsection{Ld-spaces and ld-maps}

\begin{dfn}\label{D:}

Let $M$ be a set. A \textit{locally definable space} is a triple
$\left(M,\left(M_{i},\phi_{i}\right)_{i\in I}\right)$ where

\begin{enumerate}[(i)]

\item $M_{i}\subseteq M$, $M=\bigcup_{i\in I}M_{i}$, and $\phi_{i}:M_{i}\rightarrow Z_{i}$
is a bijection between $M_{i}$ and a definable set $Z_{i}\subseteq R^{n\left(i\right)}$
for every $i\in I$,

\item $\phi_{i}\left(M_{i}\cap M_{j}\right)$ is a definable relative
open subset of $Z_{i}$ and the transition maps $\phi_{ij}=\phi_{j}\circ\phi_{i}^{-1}:\phi_{i}\left(M_{i}\cap M_{j}\right)\rightarrow M_{i}\cap M_{j}\rightarrow\phi_{j}\left(M_{i}\cap M_{j}\right)$
are definable for every $i,j\in I$.

\end{enumerate}

The \textit{dimension} of $M$ is $\dim\left(M\right)\coloneqq\sup\left\{ \dim\left(Z_{i}\right):i\in I\right\}$.
If $Z_{i}$ and $\phi_{ij}$ are definable over $A\subseteq R$ for
all $i,j\in I$, we say that $M$ is a locally definable space over
$A$.

\end{dfn}

Note that every definable space (\cite[Chapter 10]{LVD}) is a locally
definable space with $\left|I\right|<\aleph_{0}$.

Every locally definable space $\left(M,\left(M_{i},\phi_{i}\right)_{i\in I}\right)$
has a unique topology on $M$ such that each $M_{i}$ is open and $\phi_{i}$
is a homeomorphism for all $i\in I$; more precisely, $\mathcal{O}\subseteq M$
is open if and only if $\phi_{i}\left(\mathcal{O}\cap M_{i}\right)$
is relatively open in $Z_{i}$ for every $i\in I$. Throughout this
subsection any topological property of locally definable spaces refers
to this topology.

\begin{dfn}\label{D:}

Let $\left(M,\left(M_{i},\phi_{i}\right)_{i\in I}\right)$ be an ld-space.

\begin{enumerate}[(i)]

\item An \textit{ld-space} is a Hausdorff locally definable space.

\item A subset $X\subseteq M$ is called a \textit{definable subspace}
of $M$ if there is a finite $J\subseteq I$ such that $X\subseteq\bigcup_{j\in J}M_{j}$
and $\phi_{j}\left(X\cap M_{j}\right)$ is definable for all $j\in J$.

\item A subset $Y\subseteq M$ is called an \textit{compatible subspace}
of $M$ if $\phi_{i}\left(Y\cap M_{i}\right)$ is definable for every
$i\in I$, or equivalently, $Y\cap X$ is a definable subspace of
$M$ for every definable subspace $X$ of $M$.

\end{enumerate}

\end{dfn}

By Theorem 3.9 of \cite{BarOter}, every $\bigvee$-definable group
$\mathcal{U}$ with its $\tau$-topology (see \cite[Lemma 7.5]{HPePiNIP})
is an ld-space of finite dimension, and any definable subset of $\mathcal{U}$ is a definable
subspace of $\mathcal{U}$.

We recall that any compatible subspace $Y$ of an ld-space $M$ inherits
a natural structure of ld-space \cite[Remark 2.3]{BarOter} given
by $\left(Y,Y_{i}=Y\cap M_{i},\phi_{i}\mid_{Y_{i}}\right)$. And if
$Y$ is a definable subspace then it inherits the structure of a definable
space. Note that the only compatible subspaces of a definable space
are the definable ones.

Now, we will introduce the maps between ld-spaces as in \cite{BarOter}.
For this we note that given two ld-spaces $\left(M,\left(M_{i},\phi_{i}\right)_{i\in I}\right)$
and $\left(N,\left(N_{j},\psi_{j}\right)_{j\in J}\right)$ we can
endow $M\times N$ with the structure $\left(M\times N,\left(M_{i}\times N_{j},\left(\phi_{i},\psi_{j}\right)\right)_{i\in I}\right)$
that makes it into an ld-space, and as it is defined in \cite{LVD},
a map $f:M\rightarrow N$ between definable spaces $M,N$ is a \textit{definable map}
if its graph is a definable subspace of $M\times N$ .

\begin{dfn}\label{D:}

A map $\theta:M\rightarrow N$ between ld-spaces (locally definable spaces) $\left(M,\left(M_{i},\phi_{i}\right)_{i\in I}\right)$
and $\left(N,\left(N_{j},\psi_{j}\right)_{j\in J}\right)$ is called
an \textit{ld-map} (\textit{locally definable map}) if $\theta\left(M_{i}\right)$ is a definable
subspace of $N$ and $\theta\mid_{M_{i}}:M_{i}\rightarrow \theta\left(M_{i}\right)$
is definable for every $i\in I$.

\end{dfn}

\subsection{Some topological notions in ld-spaces}

\begin{dfn}\label{D:ConnLdSpa}

Let $M$ be an ld-space.

\begin{enumerate}[(i)]

\item $M$ is \textit{connected} if $M$ has no compatible nonempty
proper clopen subspace.

\item An \textit{ld-path} in $M$ is a continuous ld-map $\alpha:\left[0,1\right]\rightarrow M$.

\item\inputencoding{latin1}{ }\inputencoding{latin9}$M$ is \textit{path connected}
if for every $x_{1},x_{2}\in M$ there is an ld-path $\alpha:\left[0,1\right]\rightarrow M$
such that $\alpha\left(0\right)=x_{1}$ and $\alpha\left(1\right)=x_{2}$.

\item The \textit{path connected component} of a point $x\in M$
is the set of all $y\in M$ such that there is an ld-path from $x$
to $y$.

\end{enumerate}

\end{dfn}

By Remarks 4.1 and 4.3, and Fact 4.2 of \cite{BarOter}, (i) an ld-space $M$ is connected if and only if $M$ is path connected if and only if every ld-map from $M$ to a discrete ld-space is constant, and (ii) every path connected component of an ld-space is a clopen compatible subspace.

\begin{claim}\label{C:UnionConn}
Let $M=\bigcup_{i\in I}X_{i}$ be an ld-space such that $\left\{ X_{i}:i\in I\right\} $
is a collection of connected compatible subspaces of $M$ and $\bigcap_{i\in I}X_{i}\neq\emptyset$.
Then $M$ is connected.
\end{claim}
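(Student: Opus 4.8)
The plan is to use the characterization recalled immediately before the claim: an ld-space is connected if and only if every ld-map from it to a discrete ld-space is constant. Accordingly, let $D$ be a discrete ld-space and let $f\colon M\to D$ be an ld-map; I will show that $f$ is constant. Since $\bigcap_{i\in I}X_i\neq\emptyset$ by hypothesis, I fix once and for all a base point $p\in\bigcap_{i\in I}X_i$.

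The first step is to observe that for each $i\in I$ the restriction $f\mid_{X_i}\colon X_i\to D$ is again an ld-map, where $X_i$ carries its inherited ld-space structure with charts $X_i\cap M_j$, $j\in I$ (as recalled above for compatible subspaces). Indeed, $f\mid_{M_j}$ is definable because $f$ is an ld-map, and $X_i\cap M_j$ is, under $\phi_j$, a definable subset of $M_j$ because $X_i$ is a compatible subspace; hence the further restriction $f\mid_{X_i\cap M_j}$ is the restriction of a definable map to a definable set, so it is definable, and its image is a definable subset of the definable subspace $f(M_j)\subseteq D$. This is exactly what it means for $f\mid_{X_i}$ to be an ld-map.

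The second step finishes the argument. Each $X_i$ is connected by hypothesis, so $f\mid_{X_i}$ is constant by the same characterization applied to $X_i$; since $p\in X_i$, this constant value must equal $f(p)$. As this holds for every $i\in I$ and $M=\bigcup_{i\in I}X_i$, we conclude $f(x)=f(p)$ for all $x\in M$, i.e.\ $f$ is constant. Therefore $M$ is connected.

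The argument is essentially bookkeeping, and I do not expect a serious obstacle; the one point deserving care is the verification in the first step that restricting an ld-map to a compatible subspace yields an ld-map for the inherited structure, together with the role of the common point $p$, which is what glues the locally constant values of $f$ on the various $X_i$ into a single global constant (the claim genuinely fails without the nonempty intersection hypothesis). If one prefers to argue directly with clopen subspaces, the same base point works: a nonempty proper clopen compatible subspace $U$ of $M$ meets each $X_i$ in a clopen compatible subspace of $X_i$, hence in $\emptyset$ or $X_i$ by connectedness of $X_i$; placing $p$ in $U$ or in $M\setminus U$ then forces $U=M$ or $U=\emptyset$, a contradiction.
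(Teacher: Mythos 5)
Your proof is correct, but your main argument takes a genuinely different route from the paper's. The paper argues directly from the definition of connectedness (no nonempty proper clopen compatible subspace): given a nonempty clopen compatible $Y\subseteq M$, it picks some $X_{k}$ meeting $Y$, observes that $Y\cap X_{k}$ is a clopen compatible subspace of $X_{k}$, hence equals $X_{k}$ by connectedness, and then uses the common point of the $X_{i}$ to see that every $X_{i}$ meets $Y$, whence $Y\cap X_{i}=X_{i}$ for all $i$ and $Y=M$. You instead route everything through the characterization ``connected if and only if every ld-map to a discrete ld-space is constant,'' so the substance of your proof becomes the verification that an ld-map restricts to an ld-map on each compatible subspace, with the base point $p$ gluing the constant values. (Your closing ``alternative'' sketch with clopen subspaces is, in fact, essentially the paper's own proof.) The paper's route buys self-containedness: it needs nothing beyond the definition and closure of compatible subspaces under intersection. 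Your route delegates the topology to the cited equivalence, at the price of a caveat you should make explicit: that equivalence is only true for \emph{continuous} ld-maps. As literally recalled in the paper (without continuity) it fails already for $M=\left[0,1\right]$ and $D=\left\{0,1\right\}$, since splitting $\left[0,1\right]$ into two definable pieces gives a non-constant ld-map to a discrete space. So you should take $f$ continuous throughout and add the (easy) remark that $f\mid_{X_{i}}$ stays continuous for the inherited ld-structure; this holds because the inherited ld-topology on a compatible subspace refines the subspace topology induced from $M$. With that word inserted, both applications of the characterization --- to $M$ and to each $X_{i}$ --- are legitimate and your argument goes through; this is a wording fix, not a gap in the idea.
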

\begin{proof}
Let $Y\subseteq M$ be a compatible nonempty clopen in $M$. Since
$Y\neq\emptyset$, there is $k\in I$ such that $Y\cap X_{k}\neq\emptyset$.
Since $X_{k}$ and $Y$ are compatible in $M$, so is $Y\cap X_{k}$,
and in particular $Y\cap X_{k}\subseteq X_{k}$ is a clopen compatible
set in $X_{k}$. By the connectedness of $X_{k}$, $Y\cap X_{k}=X_{k}$.

As $\bigcap_{i\in I}X_{i}\neq\emptyset$, $X_{i}\cap X_{k}\neq\emptyset$
for every $i\in I$, then $X_{i}\cap Y\neq\emptyset$, and as above
we conclude that $Y\cap X_{i}=X_{i}$ for every $i\in I$. Therefore,
$M=\bigcup_{i\in I}X_{i}=\bigcup_{i\in I}Y\cap X_{i}=Y$. Then $M$
has no clopen proper nonempty compatible subset.

\end{proof}

\begin{cor}\label{C:CartProdConn}
Let $M,N$ be two connected ld-spaces. Then the product ld-space $M\times N$
is connected.
\end{cor}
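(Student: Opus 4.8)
The plan is to realize $M\times N$ as a union of connected compatible subspaces sharing a common point and then invoke Claim \ref{C:UnionConn}. Fix a base point $n_{0}\in N$. First I would establish that, for every $m\in M$, the slice $\{m\}\times N$ and the slice $M\times\{n_{0}\}$ are connected compatible subspaces of $M\times N$. For compatibility, given charts $M_{i},N_{j}$ the intersection $(\{m\}\times N)\cap(M_{i}\times N_{j})$ is either empty (when $m\notin M_{i}$) or equals $\{m\}\times N_{j}$, whose image under $(\phi_{i},\psi_{j})$ is $\{\phi_{i}(m)\}\times Z_{j}$, a definable set; the verification for $M\times\{n_{0}\}$ is symmetric. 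For connectedness, the map $\iota_{m}\colon N\rightarrow M\times N$, $n\mapsto(m,n)$, is an ld-isomorphism onto $\{m\}\times N$: both $\iota_{m}$ and its inverse, the restriction of the projection, are ld-maps by the same chart computation, so $\{m\}\times N$ is connected because $N$ is, and analogously $M\times\{n_{0}\}\cong M$ is connected.

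Next, for each $m\in M$ I set $T_{m}=(\{m\}\times N)\cup(M\times\{n_{0}\})$. The two pieces are connected compatible subspaces of $T_{m}$ (they are compatible in $M\times N$ and contained in the compatible subspace $T_{m}$) and they meet at the point $(m,n_{0})$, so Claim \ref{C:UnionConn} applied to this two-element family shows $T_{m}$ is connected; being a finite union of compatible subspaces, $T_{m}$ is itself compatible. Finally, $M\times N=\bigcup_{m\in M}T_{m}$, and every $T_{m}$ contains $M\times\{n_{0}\}$, so $\bigcap_{m\in M}T_{m}\supseteq M\times\{n_{0}\}\neq\emptyset$. A second application of Claim \ref{C:UnionConn}, now to the family $\{T_{m}:m\in M\}$, yields that $M\times N$ is connected. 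That $M\times N$ is Hausdorff, hence genuinely an ld-space, is inherited from $M$ and $N$ in the standard way.

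The substantive content lies entirely in the chart-level definability bookkeeping: one must check that the canonical bijections $\iota_{m}$ and the relevant projections are ld-maps \emph{in both directions}, so that connectedness transfers along them, and that compatibility is preserved both under finite unions and under passage to a compatible subspace. I expect these to be the only steps requiring care, and none of them presents a genuine obstacle. (Alternatively, the conclusion can be obtained by the path-connectedness criterion recorded after Definition \ref{D:ConnLdSpa}, concatenating an ld-path in $M\times\{n_{0}\}$ with one in $\{m\}\times N$, but the approach above keeps the argument internal to Claim \ref{C:UnionConn}.)
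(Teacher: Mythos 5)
Your proof is correct and takes essentially the same route as the paper's: both fix a base point, form the cross-shaped sets $T_{m}=\left(\left\{ m\right\} \times N\right)\cup\left(M\times\left\{ n_{0}\right\} \right)$, and apply Claim \ref{C:UnionConn} twice, first to each $T_{m}$ and then to the family $\left\{ T_{m}:m\in M\right\} $ whose intersection contains $M\times\left\{ n_{0}\right\} $. The only difference is that you spell out the chart-level verification that the slices are connected compatible subspaces (via the ld-isomorphisms with $M$ and $N$), a point the paper leaves implicit.
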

\begin{proof}
Fix $y\in N$. For $x\in M$, let $T_{x}=\left(\left\{ x\right\} \times N\right)\cup\left(M\times\left\{ y\right\} \right)$.
Since $\left(x,y\right)\in\left(\left\{ x\right\} \times N\right)\cap\left(M\times\left\{ y\right\} \right)$,
Claim \ref{C:UnionConn} implies that $T_{x}$ is connected. Finally,
as $\bigcap_{x\in M}T_{x}=M\times\left\{ y\right\} $, again Claim
\ref{C:UnionConn} implies that $\bigcup_{x\in M}T_{x}=M\times N$
is connected.
\end{proof}

\begin{prop}\label{P:DefGenIsConn}
Let $\mathcal{U}$ be a locally definable group and $X\subseteq\mathcal{U}$
a connected definable set such that the identity element
$e_{\mathcal{U}}\in X$. Then the definable generated group $\left\langle X\right\rangle $
is a connected locally definable group.
\end{prop}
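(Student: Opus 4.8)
The plan is to realize $\langle X\rangle$ as an increasing union of connected definable subspaces and then invoke Claim \ref{C:UnionConn}. Writing $Y=X\cup X^{-1}$, the generated group is $\langle X\rangle=\bigcup_{n\geq 1}\prod_n Y$, where $\prod_n Y$ denotes the $n$-fold product taken inside $\mathcal{U}$. Since $\mathcal{U}$ is a locally definable group, its multiplication and inversion are continuous ld-maps whose restrictions to definable subspaces are definable with definable image; in particular each $\prod_n Y$ is a definable subspace of $\mathcal{U}$, hence a compatible subspace of $\mathcal{U}$.

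First I would check that $Y$ is connected. Inversion is an ld-isomorphism of $\mathcal{U}$, so $X^{-1}$ is the continuous ld-image of the connected definable set $X$; here I use the characterization recorded after Definition \ref{D:ConnLdSpa}, namely that an ld-map from a connected space into a discrete space is constant, which gives that the continuous ld-image of a connected compatible subspace is connected. As $e_{\mathcal{U}}\in X$ and $e_{\mathcal{U}}=e_{\mathcal{U}}^{-1}\in X^{-1}$, the sets $X$ and $X^{-1}$ are connected compatible subspaces with $e_{\mathcal{U}}\in X\cap X^{-1}$, so Claim \ref{C:UnionConn} yields that $Y=X\cup X^{-1}$ is connected.

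Next I would prove by induction on $n$ that each $\prod_n Y$ is connected, the base case $n=1$ being the previous paragraph. For the inductive step, observe that $\prod_{n+1}Y$ is the image of $\left(\prod_n Y\right)\times Y$ under the multiplication map $\mu$ of $\mathcal{U}$. By the inductive hypothesis and Corollary \ref{C:CartProdConn}, the domain $\left(\prod_n Y\right)\times Y$ is a connected ld-space, and $\mu$ restricts to a continuous definable map on this definable subspace; hence $\prod_{n+1}Y$, being the continuous ld-image of a connected space, is connected. Finally, because $e_{\mathcal{U}}\in Y$ we have $e_{\mathcal{U}}\in\prod_n Y$ and $\prod_n Y\subseteq\prod_{n+1}Y$ for all $n$, so $\left\{\prod_n Y:n\geq 1\right\}$ is a family of connected compatible subspaces with $e_{\mathcal{U}}\in\bigcap_n\prod_n Y$; Claim \ref{C:UnionConn} then gives that $\langle X\rangle=\bigcup_n\prod_n Y$ is connected. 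That $\langle X\rangle$ is a locally definable group is the standard fact that a subgroup of a locally definable group generated by a definable set is locally definable, its atlas being supplied by the $\prod_n Y$.

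I expect the only point needing real care to be the claim that the continuous ld-image of a connected ld-space is again connected and is a compatible subspace to which Claim \ref{C:UnionConn} applies; this is where I rely on the discrete-map characterization of connectedness together with the closure of definable subspaces under the group operations. Once that is in hand, the argument is a clean combination of Claim \ref{C:UnionConn} and Corollary \ref{C:CartProdConn}.
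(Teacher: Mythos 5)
Your proposal is correct and follows essentially the same route as the paper: both realize $\left\langle X\right\rangle$ as $\bigcup_{n}\prod_{n}\left(X\cup X^{-1}\right)$, establish connectedness of each finite product via Corollary \ref{C:CartProdConn} and the continuity of multiplication, and conclude with Claim \ref{C:UnionConn}; your one-factor-at-a-time induction is just a cosmetic variant of the paper's direct use of the $i$-fold multiplication map. If anything, you are slightly more careful than the paper, which passes from the connectedness of $\prod_{i}X$ to that of $\prod_{i}\left(X\cup X^{-1}\right)$ without comment, whereas you explicitly verify that $X\cup X^{-1}$ is connected using inversion and Claim \ref{C:UnionConn}.
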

\begin{proof}
By Corollary \ref{C:CartProdConn}, $\underbrace{X\times\cdots\times X}_{i\textrm{-times}}$
is a connected definable space for every $i\in\mathbb{N}$. Since the ld-map
\begin{align*}
p_{i}:\left\langle X\right\rangle \times\cdots\times\left\langle X\right\rangle  & \rightarrow  \left\langle X\right\rangle \\
\left(x_{1},\ldots,x_{i}\right) & \mapsto  \prod_{1\leq j\leq i}x_{j}
\end{align*} is continuous (with respect to their topologies of locally definable
groups) and the image of a connected ld-space by a continuous ld-map
is connected, then $p_{i}\left(\underbrace{X\times\cdots\times X}_{i\textrm{-times}}\right)=\prod_{i}X$
is connected.

Finally, as $\bigcap_{i\in\mathbb{N}}\prod_{i}\left(X\cup X^{-1}\right)\supseteq\left(X\cup X^{-1}\right)\neq\emptyset$,
then $\bigcup_{i\in\mathbb{N}}\prod_{i}\left(X\cup X^{-1}\right)=\left\langle X\right\rangle $
is connected by Claim \ref{C:UnionConn}.
\end{proof}

\begin{dfn}\label{D:FundGp}

Let $M$ be an ld-space and $x_{0}\in M$. Let $\alpha,\gamma:\left[0,1\right]\rightarrow M$
be two ld-paths. A continuous ld-map $H\left(t,s\right):\left[0,1\right]\times\left[0,1\right]\rightarrow M$
is a \textit{homotopy} between $\alpha$ and $\gamma$ if $\alpha=H\left(\cdot,0\right)$
and $\gamma=H\left(\cdot,1\right)$. In this case, $\alpha$ and $\gamma$
are called \textit{homotopic}, denoted $\alpha\sim\gamma$.

Let $\mathbb{L}\left(M,x_{0}\right)$ be the set of all ld-paths that
start and end at the element $x_{0}\in M$. Note that being homotopic
$\sim$ is an equivalence relation on $\mathbb{L}\left(M,x_{0}\right)$.
We define \textit{the o-minimal fundamental group} $\pi_{1}\left(M,x_{0}\right)\coloneqq\mathbb{L}\left(M,x_{0}\right)/\sim$.
Observe that $\pi_{1}\left(M,x_{0}\right)$ is a group with the operation
given by the class of the concatenation of its representatives; i.e.,
$\left[\alpha\right]\cdot\left[\gamma\right]=\left[\alpha\cdot\gamma\right]$.
In case $M$ is a connected locally definable group, $\pi_{1}\left(M,x_{0}\right)$
is an abelian group (\cite[Prop. 4.1]{ED05}).

$M$ is called \textit{simply connected} if $M$ is path connected and $\pi_{1}\left(M,x_{0}\right)$
is the trivial group.

\end{dfn}

\subsection{Covering maps for ld-spaces}

The next definition of covering map for ld-spaces is taken from \cite{BarOter}.

\begin{dfn}\label{D:ld-coveringMap}

Let $\left(M,\left(M_{i},\phi_{i}\right)_{i\in I}\right)$ and $\left(N,\left(N_{j},\psi_{j}\right)_{j\in J}\right)$
be ld-spaces. A surjective continuous ld-map $\theta:M\rightarrow N$
is called an \textit{ld-covering map} if there is a family $\left\{ \mathcal{O}_{l}:l\in L\right\} $
of open definable subspaces of $N$ such that

\begin{enumerate}[(i)]

\item $N=\bigcup_{l\in L}\mathcal{O}_{l}$,

\item the cover $\left\{ \mathcal{O}_{l}\cap N_{j}:l\in L\right\} $
of every $N_{j}$ admits a finite subcover, and

\item for every $l\in L$ and each connected component $C$ of $\theta^{-1}\left(\mathcal{O}_{l}\right)$,
the restriction $\theta\mid_{C}:C\rightarrow\mathcal{O}_{l}$ is a
definable homeomorphism (so in particular both $C$ and $\theta\mid_{C}$
are definable).

\end{enumerate}

We call $\left\{ \mathcal{O}_{l}:l\in L\right\} $ a $\theta$-\textit{admissible family}
of definable neighborhoods.

\end{dfn}

\begin{rem}\label{R:}
Let $\left(M,\left(M_{i},\phi_{i}\right)_{i\in I}\right)$ and $\left(N,\left(N_{j},\psi_{j}\right)_{j\in J}\right)$
be ld-spaces, and let $\theta:M\rightarrow N$ be a surjective continuous
ld-map. Then it is easy to prove that $\theta:M\rightarrow N$ is an ld-covering map if and
only if there is a family $\left\{ \mathcal{O}_{l}:l\in L\right\} $
of open definable subspaces of $N$  such that
\begin{enumerate}[(i)]
\item $N=\bigcup_{l\in L}\mathcal{O}_{l}$,

\item the cover $\left\{ \mathcal{O}_{l}\cap N_{j}:l\in L\right\} $
of every $N_{j}$ admits a finite subcover, and

\item for every $l\in L$, $\theta^{-1}\left(\mathcal{O}_{l}\right)$
is a disjoint union $\dot{\bigcup}_{i\in L_{l}}\mathcal{O}_{l,i}^{\prime}$
of open definable subspaces of $M$ such that for every $i\in L_{l}$
the restriction $\theta\mid_{\mathcal{O}_{l,i}^{\prime}}:\mathcal{O}_{l,i}^{\prime}\rightarrow\mathcal{O}_{l}$
is a definable homeomorphism  (so in particular both $\mathcal{O}_{l,i}^{\prime}$
and $\theta\mid_{\mathcal{O}_{l,i}^{\prime}}$ are definable).
\end{enumerate}
\end{rem}

Now, we will prove that any ld-covering map between ld-spaces is closed for definable subspaces; notice that such a map is always open.

\begin{rem}\label{R:ClLim}
Let $\left(M,\left(M_{i},\phi_{i}\right)_{i\in I}\right)$ be an ld-space
and $X\subseteq M$ a definable space. If $y\in\textrm{Cl}\left(X\right)$,
then there is a definable map $g:\left(0,\epsilon\right)\rightarrow X$,
for some $\epsilon>0$, such that $\lim_{t\rightarrow0}g\left(t\right)=y$.
\end{rem}
\begin{proof}
Since $X$ is a definable subspace of $M$, then there is a finite
$J\subseteq I$ such that $X\subseteq\bigcup_{j\in J}M_{j}$. As $y\in\textrm{Cl}\left(X\right)$,
then $y\in\textrm{Cl}\left(X\cap M_{j}\right)$ for some $j\in J$.
Also, since $y\in M$, there is $k\in I$ such that $y\in M_{k}$.
Since $M_{k}$ is open in $M$ and $y\in\textrm{Cl}\left(X\cap M_{j}\right)$,
then $M_{k}\cap X\cap M_{j}\neq\emptyset$.

Because the definable spaces of an ld-space are closed under finite
intersections, then $\phi_{k}\left(M_{k}\cap X\cap M_{j}\right)$
is a definable set in $Z_{k}$. As $y\in\textrm{Cl}\left(M_{k}\cap X\cap M_{j}\right)$,
then $\phi_{k}\left(y\right)\in\textrm{Cl}\left(\phi_{k}\left(M_{k}\cap X\cap M_{j}\right)\right)$,
then, by \cite[Thm. 4.8]{P}, there is a definable map $\gamma:\left(0,\epsilon\right)\rightarrow\phi_{k}\left(M_{k}\cap X\cap M_{j}\right)$
such that $\lim_{t\rightarrow0}\gamma\left(t\right)=\phi_{k}\left(y\right)$.
Because $\phi_{k}$ is a homeomorphism between $M_{k}$ and $Z_{k}$,
$g=\phi_{k}^{-1}\circ\gamma:\left(0,\epsilon\right)\rightarrow M_{k}\cap X\cap M_{j}$
is a definable map such that $\lim_{t\rightarrow0}g\left(t\right)=y$.
\end{proof}

\begin{prop}\label{P:covisclosed}
Let $\left(M,\left(M_{i},\phi_{i}\right)_{i\in I}\right)$, $\left(N,\left(N_{j},\psi_{j}\right)_{j\in J}\right)$ be ld-spaces, a definable subspace $C\subseteq M$ closed in $M$, and $\theta:M\rightarrow N$ an ld-covering map. Then $\theta\left(C\right)\subseteq N$ is a definable subspace closed in $N$.
\end{prop}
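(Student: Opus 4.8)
The plan is to establish the two assertions separately: first that $\theta(C)$ is a definable subspace of $N$, and then that it is closed. For the former, since $C$ is a definable subspace of $M$, I fix a finite $K\subseteq I$ with $C\subseteq\bigcup_{k\in K}M_k$ and each $\phi_k(C\cap M_k)$ definable, so that each $C\cap M_k$ is a definable subset of the definable space $M_k$. Because $\theta$ is an ld-map, $\theta(M_k)$ is a definable subspace of $N$ and $\theta\mid_{M_k}$ is definable; hence $\theta(C\cap M_k)$, being the image of a definable set under a definable map, is a definable subspace of $N$. Then $\theta(C)=\bigcup_{k\in K}\theta(C\cap M_k)$ is a finite union of definable subspaces and so is a definable subspace of $N$.

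For closedness, I would take $y\in\mathrm{Cl}(\theta(C))$ and apply Remark \ref{R:ClLim} to the definable subspace $\theta(C)$: there are $\epsilon>0$ and a definable map $g:(0,\epsilon)\to\theta(C)$ with $\lim_{t\to0}g(t)=y$. Using the $\theta$-admissible family from Definition \ref{D:ld-coveringMap}, I pick an open definable subspace $\mathcal{O}\subseteq N$ with $y\in\mathcal{O}$; since $g(t)\to y$ and $\mathcal{O}$ is open, after shrinking $\epsilon$ I may assume $g(t)\in\mathcal{O}$ for all $t\in(0,\epsilon)$. By the covering property, each connected component of $\theta^{-1}(\mathcal{O})$ is mapped definably homeomorphically onto $\mathcal{O}$.

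The heart of the argument, and the step I expect to be the main obstacle, is showing that $C$ meets only \emph{finitely many} connected components of $\theta^{-1}(\mathcal{O})$. For each chart $M_k$ ($k\in K$), the set $M_k\cap\theta^{-1}(\mathcal{O})=(\theta\mid_{M_k})^{-1}(\mathcal{O}\cap\theta(M_k))$ is a definable subset of $M_k$, being the preimage of the definable set $\mathcal{O}\cap\theta(M_k)$ under the definable map $\theta\mid_{M_k}$; read in the chart $Z_k$, o-minimality gives it finitely many connected components. Each component $V$ of $\theta^{-1}(\mathcal{O})$ meets $M_k$ in a relatively clopen subset $V\cap M_k$ of $M_k\cap\theta^{-1}(\mathcal{O})$ (its complement there is the union of the other such pieces, which is open), hence a union of connected components; so only finitely many are nonempty, and $M_k$, and therefore $C$, meets only finitely many components. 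Call these $V_1,\dots,V_r$ and set $g_s=(\theta\mid_{V_s})^{-1}\circ g:(0,\epsilon)\to V_s$, the definable lifts of $g$. The fiber $\theta^{-1}(g(t))$ consists of exactly one point in each component of $\theta^{-1}(\mathcal{O})$, and since $g(t)\in\theta(C)$ it contains a point of $C$, which must lie in some $V_s$ and equal $g_s(t)$. Thus the definable sets $A_s=g_s^{-1}(C\cap V_s)=\{t:g_s(t)\in C\}$ (definable since $C\cap V_s$ is a definable subspace) satisfy $\bigcup_{s=1}^{r}A_s=(0,\epsilon)$, so by o-minimality some $A_{s_0}$ contains an interval $(0,\delta)$.

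Finally, on $(0,\delta)$ the continuous lift $g_{s_0}$ takes values in $C$ and satisfies $\theta\circ g_{s_0}=g$. Since $\theta\mid_{V_{s_0}}$ is a homeomorphism and $g(t)\to y\in\mathcal{O}$, continuity of its inverse yields $g_{s_0}(t)\to c:=(\theta\mid_{V_{s_0}})^{-1}(y)$ with $\theta(c)=y$. As $g_{s_0}(t)\in C$ for $t\in(0,\delta)$, we get $c\in\mathrm{Cl}(C)=C$ because $C$ is closed in $M$, whence $y=\theta(c)\in\theta(C)$. This proves $\mathrm{Cl}(\theta(C))\subseteq\theta(C)$, so $\theta(C)$ is closed. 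Everything outside the finiteness step is curve selection together with the local-homeomorphism property of the cover; the finiteness in the third paragraph is where o-minimality together with the definability of $C$ does the essential work.
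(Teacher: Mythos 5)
Your proof is correct, and its overall skeleton matches the paper's: curve selection via Remark \ref{R:ClLim} applied to $\theta\left(C\right)$, restriction of the curve into one admissible neighborhood $\mathcal{O}$, lifting through the sheets of $\theta^{-1}\left(\mathcal{O}\right)$, o-minimality on $\left(0,\epsilon\right)$ to trap an initial interval inside a single lift landing in $C$, and finally closedness of $C$. Where you genuinely diverge is the finiteness step, which you correctly identify as the crux. The paper handles it with \emph{saturation}: it sets $C^{\prime}=\left(\theta\mid_{C}\right)^{-1}\left(g\left(\left(0,\delta\right)\right)\right)$, notes that $C^{\prime}$ is definable and covered by the disjoint open definable sheets, and extracts a finite subcover by compactness. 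You instead prove a stronger, purely o-minimal statement: each chart $M_{k}$ (hence $C$, which lies in finitely many charts) meets only finitely many sheets, because the trace of each sheet on the definable set $M_{k}\cap\theta^{-1}\left(\mathcal{O}\right)$ is clopen and definable, hence a union of its finitely many definably connected components. This buys independence from the saturation hypothesis --- the compactness argument implicitly needs the family of sheets to be small relative to the saturation, while yours needs no cardinality assumption at all --- at the cost of a somewhat longer argument. Two presentational points to tighten: in the charts you should say \emph{definably} connected components (over a saturated real closed field, topological connected components are the wrong notion --- even $\left(0,1\right)$ is totally disconnected), and the inference ``clopen, hence a union of components'' requires the definability of $V\cap M_{k}$, which you use implicitly; it does hold, since by Definition \ref{D:ld-coveringMap}(iii) the sheets themselves are definable subspaces. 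Finally, you prove explicitly that $\theta\left(C\right)$ is a definable subspace, a point the paper simply asserts as the image of a definable subspace under an ld-map; your chart-by-chart verification is a reasonable way to justify it.
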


\begin{proof}

We will show that if $y\in\textrm{Cl}\left(\theta\left(C\right)\right)$,
then $y\in\theta\left(C\right)$. As the image of a definable space
by an ld-map is a definable space, $\theta\left(C\right)$ is a definable
space. Because $y\in\textrm{Cl}\left(\theta\left(C\right)\right)$,
Remark \ref{R:ClLim} yields the existence of a definable map $g:\left(0,\epsilon\right)\rightarrow\theta\left(C\right)$
such that $\lim_{t\rightarrow0}g\left(t\right)=y$.

Now, since $\theta:M\rightarrow\mathcal{N}$ is an ld-covering map,
there is an open definable subspace $\mathcal{O}\subseteq N$ such that
$y\in\mathcal{O}$, $\theta^{-1}\left(\mathcal{O}\right)=\dot{\bigcup}_{j\in S}\mathcal{O}_{j}^{\prime}$, and each $\mathcal{O}_{j}^{\prime}$ is an open definable subspace
in $M$ homeomorphic to $\mathcal{O}$ by $\theta$.

Since $\lim_{t\rightarrow0}g\left(t\right)=y$ and $\mathcal{O}$
is an open neighborhood of $y$, there is $\delta>0$ such that $\delta\leq\epsilon$
and $g\left(\left(0,\delta\right)\right)\subseteq\mathcal{O}$. Without
loss of generality, we can assume that $\textrm{dom}\left(g\right)=\left(0,\delta\right)$.

Let $C^{\prime}=\left(\theta\mid_{C}\right)^{-1}\left(g\left(\left(0,\delta\right)\right)\right)$.
Since $C^{\prime}\subseteq\theta^{-1}\left(\mathcal{O}\right)=\dot{\bigcup}_{j\in S}\mathcal{O}_{j}^{\prime}$
and $C^{\prime}$ is definable, then saturation implies that $C^{\prime}\subseteq\mathcal{O}_{j_{1}}^{\prime}\cup\ldots\cup\mathcal{O}_{j_{k}}^{\prime}$
for some $k\in\mathbb{N}$. As $\left(0,\delta\right)=g^{-1}\left(\theta\left(C^{\prime}\right)\right)$,
then $\left(0,\delta\right)=g^{-1}\left(\theta\left(C^{\prime}\right)\right)=g^{-1}\left(\dot{\bigcup}_{1\leq i\leq k}\theta\left(\mathcal{O}_{j_{i}}^{\prime}\cap C^{\prime}\right)\right)=\dot{\bigcup}_{1\leq i\leq k}g^{-1}\circ\theta\left(\mathcal{O}_{j_{i}}^{\prime}\cap C^{\prime}\right)$.
Since $\theta\mid_{\mathcal{O}_{j}^{\prime}}:\mathcal{O}_{j}^{\prime}\rightarrow\mathcal{O}$
is a definable homeomorphism for every $j\in S$, any path $f$ in
$\mathcal{O}$ can be lifted through $\theta\mid_{\mathcal{O}_{j}^{\prime}}$
to a path $\widetilde{f}$ in $\mathcal{O}_{j}^{\prime}$. Therefore,
in particular, for each $i\in\left\{ 1,\ldots,k\right\} $, $\widetilde{g}_{j_{i}}=\theta\mid_{\mathcal{O}_{j_{i}}^{\prime}}^{-1}\circ g$
is a definable path in $\mathcal{O}_{j_{i}}^{\prime}\cap C^{\prime}$.
Hence, $\left(0,\delta\right)=\dot{\bigcup}_{1\leq i\leq k}\widetilde{g_{j_{i}}}^{-1}\left(\mathcal{O}_{j_{i}}^{\prime}\cap C^{\prime}\right)$.

By o-minimality, there are $j\in\left\{ 1,\ldots,k\right\} $ and
a positive $\epsilon^{\star}<\delta$ such that $\left(0,\epsilon^{\star}\right)\subseteq\widetilde{g}_{j_{i}}^{-1}\left(\mathcal{O}_{j_{i}}^{\prime}\cap C^{\prime}\right)$.
Let $x=\theta\mid_{\mathcal{O}_{j_{i}}^{\prime}}^{-1}\left(y\right)$.
Since $g\mid_{\left(0,\epsilon^{\star}\right)}\left(t\right)\underset{t\rightarrow0}{\longrightarrow}y$,
then the lifting $\widetilde{g}_{j_{i}}\mid_{\left(0,\epsilon^{\star}\right)}\left(t\right)\underset{t\rightarrow0}{\longrightarrow}x$.
So, $x\in\textrm{Cl}\left(C\right)$. But $C$ is closed in $\mathcal{U}$,
so $x\in C$; namely, $y\in\theta\left(C\right)$. Then the image
by $\theta$ of any definable closed subspace of $M$ is closed in
$N$. This ends the proof of Proposition \ref{P:covisclosed}.
\end{proof}

With the previous proposition we can prove the existence of a homeomorphism between simply connected definable spaces as a restriction of a given ld-covering map as we see in the following proposition.

\begin{prop}\label{P:CovPties1}

Let $M$, $N$ be ld-spaces and $\theta:M\rightarrow N$ an ld-covering
map. Let $Y\subseteq N$ be a compatible subspace in $N$, then $\theta\mid_{\theta^{-1}\left(Y\right)}:\theta^{-1}\left(Y\right)\rightarrow Y$
is an ld-covering map of ld-spaces. If, moreover, $Y$ is definable, $n_0\in Y$, and $m_0\in\theta^{-1}\left(n_0\right)$,
then there is a definable subspace $W\subseteq M$ open in $\theta^{-1}\left(Y\right)$
such that $m_0\in W$ and the following hold.

\begin{enumerate}[(i)]

\item $\theta\mid_{W}:W\rightarrow Y$ is a definable covering map
of ld-spaces.

\item If in addition $Y$ is simply connected, then $\theta\mid_{W_{m_0}}:W_{m_0}\rightarrow Y$
is a homeomorphism of definable spaces where $W_{m_0}$ is the connected component
of $m_0$ in $W$.

\end{enumerate}

\end{prop}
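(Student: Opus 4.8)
The plan is to establish the two claims in sequence, building on Proposition \ref{P:covisclosed} and the covering-map machinery already developed. First I would verify the opening assertion that $\theta\mid_{\theta^{-1}(Y)}:\theta^{-1}(Y)\rightarrow Y$ is an ld-covering map of ld-spaces. Since $Y$ is a compatible subspace of $N$, it inherits a natural ld-space structure, and $\theta^{-1}(Y)$ is a compatible subspace of $M$ (as the preimage of a compatible set under a continuous ld-map). To produce a $(\theta\mid_{\theta^{-1}(Y)})$-admissible family, I would take a $\theta$-admissible family $\{\mathcal{O}_l:l\in L\}$ for $\theta$ and intersect each $\mathcal{O}_l$ with $Y$. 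The sets $\mathcal{O}_l\cap Y$ are open definable subspaces of $Y$ covering $Y$, the finite-subcover condition is inherited, and over each $\mathcal{O}_l\cap Y$ the preimage decomposes as the intersection of the old sheets $\mathcal{O}_{l,i}'$ with $\theta^{-1}(Y)$, each of which maps homeomorphically onto $\mathcal{O}_l\cap Y$ by restricting the old definable homeomorphism. This verifies the three conditions of Definition \ref{D:ld-coveringMap} (via Remark \ref{R:}).

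Next, assuming $Y$ is definable with $n_0\in Y$ and $m_0\in\theta^{-1}(n_0)$, I would construct the definable subspace $W$ for part (i). Because $Y$ is definable and $\theta\mid_{\theta^{-1}(Y)}$ is an ld-covering map, the admissible cover $\{\mathcal{O}_l\cap Y\}$ of the definable set $Y$ admits, by saturation and the finite-subcover hypothesis in Definition \ref{D:ld-coveringMap}(ii), a finite subcover $\mathcal{O}_{l_1}\cap Y,\ldots,\mathcal{O}_{l_r}\cap Y$. The union of all the sheets lying over these finitely many sets that can be reached from $m_0$ gives a definable subspace; more carefully, I would let $W$ be the union of those sheets $\mathcal{O}_{l_s,i}'\cap\theta^{-1}(Y)$ needed to cover a single chosen sheet over each $\mathcal{O}_{l_s}\cap Y$ in a compatible way, ensuring $m_0\in W$, $W$ is open in $\theta^{-1}(Y)$, $W$ is a finite union of definable sheets (hence a definable subspace), and $\theta\mid_W:W\rightarrow Y$ remains an ld-covering map with the finite admissible family $\{\mathcal{O}_{l_s}\cap Y\}$. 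The key point is that restricting to finitely many base sets and the sheets above them keeps everything definable.

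For part (ii), assuming $Y$ is simply connected, I would show $\theta\mid_{W_{m_0}}:W_{m_0}\rightarrow Y$ is a homeomorphism, where $W_{m_0}$ is the connected component of $m_0$ in $W$. Since $\theta\mid_W:W\rightarrow Y$ is a definable covering map onto a simply connected base, the restriction to any connected component of $W$ is a covering map onto $Y$ with simply connected base; by the standard lifting criterion (a connected covering of a simply connected space is a homeomorphism, via the monodromy/path-lifting argument available for ld-paths in this setting), $\theta\mid_{W_{m_0}}$ is injective and, being a local homeomorphism that is also surjective onto connected $Y$, is a homeomorphism. Here I would invoke the fundamental-group formalism of Definition \ref{D:FundGp}: any loop in $Y$ based at $n_0$ lifts to a path in $W_{m_0}$, which must be a loop because $\pi_1(Y,n_0)$ is trivial, forcing the fiber over $n_0$ inside $W_{m_0}$ to be a single point and hence injectivity. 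The surjectivity and definable-homeomorphism conclusion then follow from Proposition \ref{P:covisclosed}, which guarantees $\theta\mid_{W_{m_0}}$ is a closed (as well as open) definable map, so its image is clopen in connected $Y$ and equals $Y$.

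The main obstacle I anticipate is the careful definability bookkeeping in part (i): ensuring that the $W$ selected from the infinite sheet decomposition is genuinely a \emph{definable} subspace (a finite union of sheets) rather than merely a compatible one, while simultaneously guaranteeing it is open in $\theta^{-1}(Y)$ and contains the prescribed point $m_0$. This requires using saturation together with the finite-subcover condition of Definition \ref{D:ld-coveringMap}(ii) in an essential way, and then verifying that the finitely many chosen sheets patch together coherently over the overlaps $\mathcal{O}_{l_s}\cap\mathcal{O}_{l_{s'}}\cap Y$ so that $W$ is connected enough to carry the covering structure; the simple-connectivity argument in part (ii) is then comparatively routine once the definable covering $\theta\mid_W$ is in hand.
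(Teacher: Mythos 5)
Your proposal is correct and, for the opening assertion and part (i), is essentially the paper's own proof: intersect a $\theta$-admissible family $\left\{ \mathcal{O}_{l}\right\}$ with $Y$; use definability of $Y$ plus saturation to extract a finite subcover $\mathcal{O}_{1}\cap Y,\ldots,\mathcal{O}_{s}\cap Y$; take $W$ to be a finite union of full sheets over these sets, chosen so that the sheet through $m_0$ is included. (Your anticipated obstacle about choosing sheets ``coherently over the overlaps'' is not actually needed: the paper fixes an \emph{arbitrary} finite nonempty set of sheets over each $\mathcal{O}_{i}\cap Y$, only insisting that the sheet containing $m_0$ appear; surjectivity over each piece is automatic since at least one full sheet per piece lies in $W$.) Where you genuinely diverge is the finish of part (ii). The paper first upgrades $\theta\mid_{W_{m_0}}:W_{m_0}\rightarrow Y$ to a definable covering map (Claim \ref{C:covinW0}: surjectivity from Proposition \ref{P:covisclosed} and connectedness of $Y$; admissibility because each component of $\theta\mid_{W}^{-1}\left(\mathcal{O}_{i}\cap Y\right)$ is either contained in $W_{m_0}$ or disjoint from it), and then concludes by citing \cite[Remark 3.8]{EdPanPre2013}, which produces an ld-covering map $\beta:Y\rightarrow W_{m_0}$ with $\theta\mid_{W_{m_0}}\circ\beta=\textrm{id}$, so $\theta\mid_{W_{m_0}}$ is a homeomorphism. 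You instead run the classical monodromy argument: triviality of $\pi_{1}\left(Y,n_0\right)$ forces the fiber of $\theta\mid_{W_{m_0}}$ over $n_0$ to be a singleton. That finish is sound, but it rests on unique path lifting and homotopy lifting for ld-covering maps, which this paper never proves; in this category those lifting lemmas are exactly what the cited remark of \cite{EdPanPre2013} packages, so you should invoke that remark (or the lifting results of \cite{BarOter}) rather than treat them as free, and you should make the component-containment observation explicit, since your argument, like the paper's, needs $\theta\mid_{W_{m_0}}$ to be a covering map of $Y$ and not merely the restriction of one.
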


\begin{proof}

Since the preimage of a compatible subspace by an ld-map is a compatible
subspace, $\theta^{-1}\left(Y\right)$ is a compatible subset of $M$.
As $\theta\mid_{\theta^{-1}\left(Y\right)}:\theta^{-1}\left(Y\right)\rightarrow Y$ is a continuous surjection,
it only remains to show the existence of a $\theta\mid_{\theta^{-1}\left(Y\right)}$-admissible
family of definable neighborhoods. Let $\left\{ \mathcal{O}_{i}\right\} _{i\in L}$
be a $\theta$-admissible family of definable neighborhoods such that
$\theta^{-1}\left(\mathcal{O}_{i}\right)=\dot{\bigcup}_{j\in S_{i}}\mathcal{O}_{i_{j}}^{\prime}$ and $\mathcal{O}_{i_{j}}^{\prime}$ is ld-homeomorphic to $\mathcal{O}_{i}$ by $\theta$ for any $i\in L$, $j\in S_i$. Then it is easy to see that $\left\{ \mathcal{O}_{i}\cap Y\right\} _{i\in L}$
is a $\theta\mid_{\theta^{-1}\left(Y\right)}$-admissible family of
definable neighborhoods.

Now, assume that $Y$ is also a definable space. Following, we will
prove (i). Let $\left\{ \mathcal{O}_{i}\cap Y\right\} _{i\in L}$
be the above $\theta\mid_{\theta^{-1}\left(Y\right)}$ -admissible
family of definable neighborhoods for $\theta\mid_{\theta^{-1}\left(Y\right)}:\theta^{-1}\left(Y\right)\rightarrow Y$.
Hence, the definability of $Y$ and the saturation of the model imply
that there is $s\in\mathbb{N}$ such that $Y=\bigcup_{1\leq i\leq s}\mathcal{O}_{i}\cap Y$.
For each $i\in\left\{ 1,\ldots,s\right\} $ fix an arbitrary finite
nonempty subset $S_{i}^{\prime}\subseteq S_{i}$ such that if $e_{n_0}\in\mathcal{O}_{i}\cap Y$,
then there is $j\in S_{i}^{\prime}$ such that $e_{m_0}\in\mathcal{O}_{i_{j}}^{\prime}\cap\theta^{-1}\left(Y\right)$.
Let $W=\bigcup \left\{ \mathcal{O}_{i_{j}}^{\prime}\cap\theta^{-1}\left(Y\right):i\in\left\{ 1,\ldots,s\right\} ,\, j\in S_{i}^{\prime}\right\} $,
which is open in $\theta^{-1}\left(Y\right)$. Then $\left\{ \mathcal{O}_{i}\cap Y:i\in\left\{ 1,\ldots,s\right\} \right\} $
is a $\theta\mid_{W}$-admissible family of definable neighborhoods.
So $\theta\mid_{W}:W\rightarrow Y$ is a definable covering map of
ld-spaces.

For (ii), first we will prove that $\theta\mid_{W_{m_0}}:W_{m_0}\rightarrow Y$
is a definable covering map of ld-spaces, this is the next claim.

\begin{claim}\label{C:covinW0}

Let $W_{m_0}$ be the connected component of $m_0$ in $W$. Then

\begin{enumerate}[(i)]

\item $\theta\mid_{W_{m_0}}:W_{m_0}\rightarrow Y$ is surjective.

\item There is a $\theta\mid_{W_{m_0}}$-admissible family of definable
neighborhoods.

\end{enumerate}

Therefore, $\theta\mid_{W_{m_0}}:W_{m_0}\rightarrow Y$ is a definable
covering map of ld-spaces.

\end{claim}

\begin{proof}

(i) By Fact 4.2 of \cite{BarOter}, $W_{m_0}$ is a clopen definable
subset of $W$. By Proposition \ref{P:covisclosed}, $\theta\left(W_{m_0}\right)$
is a definable space clopen in $Y$, but $Y$ is connected, so $\theta\left(W_{m_0}\right)=Y$;
i.e., $\theta$ is surjective.

(ii) The same $\theta\mid_{W}$-admissible family of definable neighborhoods
$\left\{ \mathcal{O}_{i}\cap Y:i\in\left\{ 1,\ldots,s\right\} \right\} $
works for $\theta\mid_{W_{m_0}}$ because if $C$ is a connected component
of $\theta\mid_{W}^{-1}\left(\mathcal{O}_{i}\cap Y\right)$ in $W$,
then $C$ is either entirely contained in $W_{m_0}$ or is disjoint
from $W_{m_0}$. Therefore, $C$ is homeomorphic by $\theta\mid_{W_{m_0}}$
with $\mathcal{O}_{i}\cap Y$.

From (i) and (ii), $\theta\mid_{W_{m_0}}:W_{m_0}\rightarrow Y$ is a definable
covering map of ld-spaces.

\end{proof}

Since $Y$ is simply connected, \cite[Remark 3.8]{EdPanPre2013} implies
that there is an ld-covering map $\beta:Y\rightarrow W_{m_0}$ such that
$\textrm{id}=\theta\mid_{W_{m_0}}\circ\beta$, then $\theta\mid_{W_{m_0}}:W_{m_0}\rightarrow Y$
is a definable homeomorphism.

\end{proof}

\section{The o-minimal universal covering homomorphism of a locally definable group}\label{S:3}

This section is devoted to introduce the notion and properties of locally definable covering homomorphism and o-minimal universal covering homomorphism.

\begin{dfn}\label{D:}
Let $\mathcal{U}$, $\mathcal{V}$ be locally definable groups. An ld-covering map $\theta:\mathcal{U}\rightarrow\mathcal{V}$ that is also a homomorphism is called a \textit{locally definable covering homomorphism}. As before, $\left\{ U_{i}\right\} _{i\in I}$
is called a $\theta$\textit{-admissible family of definable neighborhoods}.

Two locally definable covering homomorphisms $\theta:\mathcal{U}\rightarrow\mathcal{V}$,
$\theta^{\prime}:\mathcal{U}^{\prime}\rightarrow\mathcal{V}$ are
called \textit{equivalent} if there are locally definable covering
homomorphisms $\beta:\mathcal{U}\rightarrow\mathcal{U}^{\prime}$
and $\beta^{\prime}:\mathcal{U}^{\prime}\rightarrow\mathcal{U}$ such
that $\theta=\theta^{\prime}\circ\beta$ and $\theta^{\prime}=\theta\circ\beta^{\prime}$,
so the following diagram commutes.
\[
\xymatrix@C=30pt{\mathcal{U}\ar[d]_{\theta}\ar@{-->}@/^/[r]^{\beta} & \mathcal{U}^{\prime}\ar[dl]^{\theta^{\prime}}\ar@{-->}@/^/[l]^{\beta^{\prime}}\\
\mathcal{V}
}
\]
In general, in our diagrams the regular arrows are maps whose existence is assumed, and the dashed arrows are maps whose existence is asserted. The inclusion map is denoted by $i$.
\end{dfn}

\begin{fact}\cite[Theorem 3.6]{ED05}\label{F:Thm3.6}
Let $\theta:\mathcal{U}\rightarrow\mathcal{V}$ be a surjective locally
definable homomorphism between locally definable groups. If $\ker\left(\theta\right)$
has dimension zero, then $\theta:\mathcal{U}\rightarrow\mathcal{V}$
is a locally definable covering homomorphism.
\end{fact}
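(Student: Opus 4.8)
The plan is to exploit that $\theta$ is a group homomorphism, so that its entire covering structure is forced by its behaviour near the identity together with left translation. Write $K=\ker\left(\theta\right)$, a normal locally definable subgroup of $\mathcal{U}$ with $\dim K=0$. In an o-minimal structure a locally definable set of dimension zero meets every definable set in a finite set, so $K$ is discrete and $e_{\mathcal{U}}$ is isolated in $K$; in particular there is an open definable neighbourhood $O$ of $e_{\mathcal{U}}$, coming from a chart of $\mathcal{U}$, with $O\cap K=\left\{e_{\mathcal{U}}\right\}$.

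The first and main step is to show that $\theta$ is a local homeomorphism at the identity. Using continuity of multiplication I would choose a symmetric open definable neighbourhood $U$ of $e_{\mathcal{U}}$ with $U\cdot U\subseteq O$, so that $U\cdot U\cap K=\left\{e_{\mathcal{U}}\right\}$. This single condition does two things at once. First, $\theta\mid_{U}$ is injective: if $\theta\left(u_{1}\right)=\theta\left(u_{2}\right)$ with $u_{1},u_{2}\in U$, then $u_{1}u_{2}^{-1}\in K\cap U\cdot U=\left\{e_{\mathcal{U}}\right\}$. Second, the $K$-translates of $U$ are pairwise disjoint, since $k_{1}U\cap k_{2}U\neq\emptyset$ forces $k_{1}^{-1}k_{2}\in K\cap U\cdot U$. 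The delicate point is that $\theta\mid_{U}$ is not merely a continuous definable bijection onto $W\coloneqq\theta\left(U\right)$ but a homeomorphism onto an \emph{open} subset of $\mathcal{V}$. Here I would invoke the fibre-dimension formula for surjective locally definable homomorphisms, $\dim\mathcal{U}=\dim\mathcal{V}+\dim K=\dim\mathcal{V}$, together with o-minimal invariance of domain: read in charts, $\theta\mid_{U}$ is a continuous injective definable map between definable sets of equal dimension $\dim\mathcal{V}$, hence is open and a homeomorphism onto its image. This is the heart of the argument and the step I expect to be the main obstacle, since it is exactly where the hypothesis $\dim K=0$ is consumed.

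Granting this, the covering structure over $W$ is immediate from the homomorphism property: $\theta^{-1}\left(W\right)=KU=\dot{\bigcup}_{k\in K}kU$, the union being disjoint by the choice of $U$, and each restriction $\theta\mid_{kU}:kU\to W$ is a definable homeomorphism because $\theta\circ\lambda_{k}=\theta$, where $\lambda_{k}$ denotes left translation by $k$. To cover all of $\mathcal{V}$ I would translate: for $g\in\mathcal{V}$ the set $gW$ is an open definable neighbourhood of $g$, and choosing $\widetilde{g}\in\theta^{-1}\left(g\right)$ gives $\theta^{-1}\left(gW\right)=\dot{\bigcup}_{k\in K}\widetilde{g}kU$, with each piece mapped homeomorphically onto $gW$ by $\theta$. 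Thus the family $\left\{gW:g\in\mathcal{V}\right\}$ realizes the covering condition (iii) of Definition \ref{D:ld-coveringMap} in its equivalent disjoint-union form.

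It remains to cut this down to a family satisfying the local finiteness condition (ii). Using that $\mathcal{V}$ is $\bigvee$-definable, I would write it as a directed union of definable sets exhausting $\mathcal{V}$, each contained in finitely many charts; since $W$ is a fixed open definable neighbourhood of $e_{\mathcal{V}}$ and $g\in gW$ for every $g$, a saturation argument shows that each such definable piece is covered by finitely many translates $g_{1}W,\dots,g_{r}W$. Indexing the resulting translates over the exhaustion produces a $\theta$-admissible family for which $\left\{gW\cap N_{j}\right\}$ admits a finite subcover for every chart $N_{j}$, yielding conditions (i) and (ii). As $\theta$ is by hypothesis a surjective continuous locally definable homomorphism, the existence of this admissible family makes it an ld-covering map, and hence a locally definable covering homomorphism.
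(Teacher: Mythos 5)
First, a point of orientation: the paper does not prove this statement at all; it imports it as a Fact from \cite[Theorem 3.6]{ED05}, so your argument can only be judged on its own terms rather than against an in-paper proof. On those terms, the first two thirds of your proposal are essentially sound: discreteness of $K=\ker\left(\theta\right)$, the choice of a symmetric definable open $U$ with $U\cdot U\cap K=\left\{ e_{\mathcal{U}}\right\} $, injectivity of $\theta\mid_{U}$ and disjointness of the $K$-translates, openness of $\theta\mid_{U}$ via the fibre-dimension formula together with o-minimal invariance of domain, and the translation argument do produce a family $\left\{ gW:g\in\mathcal{V}\right\} $, $W=\theta\left(U\right)$, of open definable sets satisfying conditions (i) and (iii) of Definition \ref{D:ld-coveringMap} (in the disjoint-union form).

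The genuine gap is condition (ii). You assert that ``a saturation argument'' shows each chart $N_{j}$, or each definable piece of an exhaustion of $\mathcal{V}$, is covered by finitely many translates $g_{1}W,\dots,g_{r}W$. This is false in general, and no saturation argument is available: the cover $\left\{ gW:g\in\mathcal{V}\right\} $ is indexed by $\mathcal{V}$ itself, a set whose cardinality is that of the model, so the partial type $\left\{ x\in N_{j}\right\} \cup\left\{ x\notin gW:g\in\mathcal{V}\right\} $ involves far more than $\kappa$ parameters and $\kappa$-saturation says nothing about it. What your claim amounts to is that \emph{every} definable neighbourhood of the identity is generic in $\mathcal{V}$, which fails whenever $\mathcal{V}$ is not definably compact. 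Concretely, take $\theta$ to be the product of the o-minimal universal covering of $S^{1}\left(R\right)=\left\{ x^{2}+y^{2}=1\right\} $ with the identity on $\left(R,+\right)$, so $K\cong\mathbb{Z}\times\left\{ 0\right\} $ is zero-dimensional; choosing $U$ to be (a lifted small arc)$\times\left(-1,1\right)$ satisfies every requirement of your construction, yet each translate $gW$ is bounded in the $R$-direction while $\mathcal{V}=S^{1}\left(R\right)\times R$ has an unbounded chart, so no finite subfamily covers that chart and your family is not $\theta$-admissible (even though $\theta$ \emph{is} a covering homomorphism, witnessed by a different family, e.g.\ finitely many sets of the form arc$\times R$). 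The repair is not cosmetic: an admissible family cannot in general consist of translates of one fixed small neighbourhood; one must allow large evenly-covered sets --- say, finitely many open, definably simply connected definable sets (such as open cells) covering each chart --- and then prove that $\theta$ trivializes over each of them. That trivialization is a monodromy/path-lifting statement, which is precisely the nontrivial content of the o-minimal covering-space theory of \cite{ED05} and \cite{BarOter}; it does not follow from the translation trick, and without it your construction establishes (i) and (iii) but not (ii).
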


\begin{dfn}\label{D:DefUnivCov}
Let $\mathcal{V}$ be a connected locally definable group. A locally
definable covering homomorphism $\theta:\mathcal{U}\rightarrow\mathcal{V}$
with $\mathcal{U}$ connected is called an \textit{o-minimal universal covering homomorphism} of $\mathcal{V}$
if for every locally definable covering homomorphism $\pi:\mathcal{Z}\rightarrow\mathcal{V}$
with $\mathcal{Z}$ connected, there exists a locally definable covering
homomorphism $\beta:\mathcal{U}\rightarrow\mathcal{Z}$ such that $\theta=\pi\circ\beta$. In this case $\mathcal{Z}$ is called an \textit{o-minimal universal covering group} of $\mathcal{V}$.
\end{dfn}

Note that if there are two o-minimal universal covering homomorphisms $\theta:\mathcal{U}\rightarrow\mathcal{V}$ and
$\theta^{\prime}:\mathcal{U}^{\prime}\rightarrow\mathcal{V}$ of a connected locally definable group $\mathcal{V}$, then there exist locally definable covering
homomorphisms $\beta:\mathcal{U}\rightarrow\mathcal{U}^{\prime}$
and $\beta^{\prime}:\mathcal{U}^{\prime}\rightarrow\mathcal{U}$ such
that $\theta=\theta^{\prime}\circ\beta$ and $\theta^{\prime}=\theta\circ\beta^{\prime}$. Therefore, if $\mathcal{V}$ has an o-minimal universal covering homomorphism, then it is unique up to equivalent locally definable covering homomorphisms. Thus, we can say ``the'' o-minimal universal covering homomorphism of $\mathcal{V}$, and sometimes we denote the o-minimal universal covering group of $\mathcal{V}$ by $\widetilde{\mathcal{V}}$.

In \cite{EdPan} Edmundo and Eleftheriou constructed a locally definable covering homomorphism $\theta:\mathcal{U}\rightarrow\mathcal{V}$ for a given connected locally definable group $\mathcal{V}$ that satisfies the definition of an o-minimal universal covering homomorphism of $\mathcal{V}$ (Def. \ref{D:DefUnivCov}) (so the o-minimal universal covering homomorphism of $\mathcal{V}$ exists), and they showed the following.

\begin{fact}\label{F:EdThm3.11}\cite[Thm. 3.11]{EdPan}
For a connected locally definable group $\mathcal{V}$, the kernel of its o-minimal universal covering homomorphism is isomorphic, as abstract groups, to the o-minimal fundamental group $\pi_{1}\left(\mathcal{V}\right)$.
\end{fact}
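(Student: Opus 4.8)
The plan is to reproduce, inside the locally definable category, the classical identification of the deck group of a universal cover with the fundamental group. Write $\theta:\mathcal{U}\to\mathcal{V}$ for the o-minimal universal covering homomorphism, fix the identities $e_{\mathcal{U}}\in\mathcal{U}$, $e_{\mathcal{V}}\in\mathcal{V}$ with $\theta(e_{\mathcal{U}})=e_{\mathcal{V}}$, and set $K=\ker(\theta)=\theta^{-1}(e_{\mathcal{V}})$. The decisive structural input I would record first is that $\mathcal{U}$ is not merely connected but \emph{simply connected}; this is part of the construction of the o-minimal universal cover in \cite{EdPan} and is exactly the feature that makes the argument below run.

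I would then define a map $\Phi:K\to\pi_{1}(\mathcal{V},e_{\mathcal{V}})$ as follows. Given $k\in K$, use that the connected ld-space $\mathcal{U}$ is path connected (Def. \ref{D:ConnLdSpa} and the equivalence recorded after it) to choose an ld-path $\gamma_{k}:[0,1]\to\mathcal{U}$ from $e_{\mathcal{U}}$ to $k$; since $\theta(k)=\theta(e_{\mathcal{U}})=e_{\mathcal{V}}$, the composite $\theta\circ\gamma_{k}$ lies in $\mathbb{L}(\mathcal{V},e_{\mathcal{V}})$, and I set $\Phi(k)=[\theta\circ\gamma_{k}]$.

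Next I would verify the four standard properties. \emph{Well-definedness:} two choices $\gamma_{k},\gamma_{k}^{\prime}$ differ by a loop at $e_{\mathcal{U}}$ (the concatenation of $\gamma_{k}$ with the reverse of $\gamma_{k}^{\prime}$), which is null-homotopic by simple connectedness; pushing that homotopy down through the continuous ld-map $\theta$ gives $\theta\gamma_{k}\sim\theta\gamma_{k}^{\prime}$. \emph{Homomorphism:} for $k_{1},k_{2}\in K$, the concatenation $\gamma_{k_{1}}\cdot(k_{1}\cdot\gamma_{k_{2}})$ of $\gamma_{k_{1}}$ with the left translate of $\gamma_{k_{2}}$ by $k_{1}$ is an ld-path from $e_{\mathcal{U}}$ to $k_{1}k_{2}$ (left translation is a locally definable homeomorphism), and since $\theta$ is a homomorphism with $\theta(k_{1})=e_{\mathcal{V}}$ we get $\theta(k_{1}\cdot\gamma_{k_{2}})=\theta\circ\gamma_{k_{2}}$, whence $\Phi(k_{1}k_{2})=\Phi(k_{1})\Phi(k_{2})$. \emph{Injectivity:} if $\Phi(k)$ is trivial, lift the null-homotopy of $\theta\gamma_{k}$ through $\theta$ to a homotopy in $\mathcal{U}$ fixing endpoints; the terminal point stays at $\gamma_{k}(1)=k$ while the path deforms to the constant loop at $e_{\mathcal{U}}$, forcing $k=e_{\mathcal{U}}$. \emph{Surjectivity:} given $[\sigma]\in\pi_{1}(\mathcal{V},e_{\mathcal{V}})$, lift $\sigma$ through $\theta$ to an ld-path $\widetilde{\sigma}$ starting at $e_{\mathcal{U}}$; then $\widetilde{\sigma}(1)\in\theta^{-1}(e_{\mathcal{V}})=K$ and $\Phi(\widetilde{\sigma}(1))=[\sigma]$. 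Together these exhibit $\Phi$ as a group isomorphism $K\cong\pi_{1}(\mathcal{V})$.

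The main obstacle is not the algebra but the lifting machinery: I need unique path-lifting and homotopy-lifting for ld-covering maps, and — more delicately — I need the lifts to stay inside the locally definable category, so that the lift of an ld-path is again an ld-path and homotopies lift to ld-homotopies. The local definable-homeomorphism structure of Def. \ref{D:ld-coveringMap} supplies local lifting (as already exploited in the proof of Prop. \ref{P:covisclosed}, where a path in $\mathcal{O}$ is lifted through $\theta\mid_{\mathcal{O}_{j}^{\prime}}$); the real work is to patch these local lifts along the finite subcovers guaranteed by condition (ii) of that definition and to check that definability and continuity survive the patching, alongside the simple connectedness of $\mathcal{U}$. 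These are precisely the points established in the o-minimal covering theory of \cite{BarOter,EdPan,EdPanPre2013}, which is why the statement can be quoted from there.
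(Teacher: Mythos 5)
The paper itself contains no proof of this statement: it is recorded as a Fact and deferred entirely to \cite[Thm.~3.11]{EdPan}, so your argument cannot coincide with ``the paper's proof''---it is a genuine reconstruction, and it is essentially correct. It is the classical identification of the fiber of a simply connected cover with the fundamental group, transplanted to the ld-category, and your four verifications (well-definedness, homomorphism via translation of paths by elements of the kernel, injectivity via homotopy lifting, surjectivity via path lifting) are the right ones. The difference from the source is where the work sits: in \cite{EdPan} the identification is close to true by construction, since the universal cover is assembled from homotopy classes of ld-paths based at the identity, so the fiber over $e_{\mathcal{V}}$ \emph{is} $\pi_{1}\left(\mathcal{V}\right)$ essentially by definition, and the effort goes into showing that this object is a locally definable group and the endpoint map a locally definable covering homomorphism; your route instead takes an abstract simply connected cover and proves the identification from lifting, which buys independence from any particular construction but requires unique ld-path lifting and ld-homotopy lifting with fixed endpoints for ld-covering maps---you correctly flag this as the technical heart, and it is indeed available in \cite{BarOter,EdPan,EdPanPre2013} (the paper's own proof of Prop.~\ref{P:covisclosed} uses the local version). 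Two cautions. First, Def.~\ref{D:DefUnivCov} pins down the universal covering homomorphism only up to equivalence, so you are computing the kernel of one representative; to conclude for ``the'' kernel you should note that equivalent covering homomorphisms have isomorphic kernels (a short uniqueness-of-lifts argument), or that every representative is simply connected. Second, be careful where simple connectedness of $\mathcal{U}$ comes from: taking it from Fact~\ref{F:UnvCovIFFisS.C.} would risk circularity, since that characterization is itself part of the covering theory built on the statement being proved; your stated choice---simple connectedness as a byproduct of the construction in \cite{EdPan}---avoids this.
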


\begin{fact}\label{F:UnvCovIFFisS.C.}\cite[Remark 3.8]{EdPanPre2013}
A locally definable covering homomorphism $\pi:\mathcal{U}\rightarrow\mathcal{V}$ between connected locally definable groups $\mathcal{U}$ and $\mathcal{V}$ is the o-minimal universal covering homomorphism of $\mathcal{V}$ if and only if $\pi_{1}\left(\mathcal{U}\right)=\left\{ 0\right\}$.
\end{fact}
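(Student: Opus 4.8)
The plan is to prove the two implications separately, transporting standard covering-space theory to the locally definable category and using the universal property of Definition \ref{D:DefUnivCov} together with Facts \ref{F:Thm3.6} and \ref{F:EdThm3.11} as the structural inputs. Throughout I use that $\mathcal{U}$ and $\mathcal{V}$ are connected, and the recurring reduction that a surjective ld-homomorphism with dimension-zero kernel is automatically a locally definable covering homomorphism (Fact \ref{F:Thm3.6}).

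For the direction ($\Leftarrow$), I would assume $\pi_{1}\left(\mathcal{U}\right)=\left\{0\right\}$ and directly verify the universal property. Given an arbitrary locally definable covering homomorphism $q:\mathcal{Z}\rightarrow\mathcal{V}$ with $\mathcal{Z}$ connected, I must produce a locally definable covering homomorphism $\beta:\mathcal{U}\rightarrow\mathcal{Z}$ with $\pi=q\circ\beta$. Since $\mathcal{U}$ is connected and simply connected, the lifting criterion for ld-covering maps yields a continuous ld-map $\beta$ lifting $\pi$ through $q$ with $\beta\left(e_{\mathcal{U}}\right)=e_{\mathcal{Z}}$. To see $\beta$ is a homomorphism I would compare the two ld-maps $\left(x,y\right)\mapsto\beta\left(xy\right)$ and $\left(x,y\right)\mapsto\beta\left(x\right)\beta\left(y\right)$ on $\mathcal{U}\times\mathcal{U}$: both lift $\left(x,y\right)\mapsto\pi\left(xy\right)$ through $q$ and agree at $\left(e_{\mathcal{U}},e_{\mathcal{U}}\right)$, so by uniqueness of lifts and the connectedness of $\mathcal{U}\times\mathcal{U}$ (Corollary \ref{C:CartProdConn}) they coincide. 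Finally, $q\circ\beta=\pi$ forces $\ker\beta\subseteq\ker\pi$, so $\dim\ker\beta=0$; since $\beta$ is locally the composite of the local homeomorphism $\pi$ with a local inverse of $q$, its image is an open subgroup of the connected group $\mathcal{Z}$, hence all of $\mathcal{Z}$. Fact \ref{F:Thm3.6} then certifies $\beta$ as a locally definable covering homomorphism, establishing the universal property.

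For the direction ($\Rightarrow$), assume $\pi$ is the o-minimal universal covering homomorphism and let $\rho:\widetilde{\mathcal{U}}\rightarrow\mathcal{U}$ be the o-minimal universal covering homomorphism of $\mathcal{U}$. Since $\ker\rho$ and $\ker\pi$ both have dimension zero, so does $\ker\left(\pi\circ\rho\right)$, and Fact \ref{F:Thm3.6} shows $\pi\circ\rho:\widetilde{\mathcal{U}}\rightarrow\mathcal{V}$ is a locally definable covering homomorphism whose source $\widetilde{\mathcal{U}}$ is connected. Applying the universal property of $\pi$ to this covering produces a locally definable covering homomorphism $\beta:\mathcal{U}\rightarrow\widetilde{\mathcal{U}}$ with $\pi=\pi\circ\left(\rho\circ\beta\right)$. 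Then $\rho\circ\beta$ is a homomorphism fixing $e_{\mathcal{U}}$ and lifting $\pi$ through $\pi$, so by uniqueness of lifts on the connected group $\mathcal{U}$ it equals $\mathrm{id}_{\mathcal{U}}$; the identity $\rho\circ\left(\beta\circ\rho\right)=\rho$ together with uniqueness of lifts on $\widetilde{\mathcal{U}}$ gives $\beta\circ\rho=\mathrm{id}_{\widetilde{\mathcal{U}}}$. Hence $\rho$ is an isomorphism, so $\ker\rho=\left\{e\right\}$, and Fact \ref{F:EdThm3.11} identifies $\pi_{1}\left(\mathcal{U}\right)\cong\ker\rho=\left\{0\right\}$.

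The main obstacle is the covering-space machinery feeding both directions: the existence half of the lifting criterion (that a map out of a connected, simply connected ld-group lifts through any ld-covering) and the uniqueness of lifts out of a connected ld-space. These are not formal consequences of the definitions recalled here; they rest on the path- and homotopy-lifting properties for ld-covering maps, which I would invoke from the \cite{EdPan} and \cite{BarOter} development of the o-minimal fundamental group rather than reprove. Everything else --- the homomorphism property of the lift, the passage from ``covering'' to ``dimension-zero kernel'' through Fact \ref{F:Thm3.6}, and the identification of kernels with $\pi_{1}$ through Fact \ref{F:EdThm3.11} --- is then routine bookkeeping.
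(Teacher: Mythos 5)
There is nothing to compare against inside the paper: the statement you were asked to prove is a \emph{Fact} that the paper imports verbatim from the literature (it is cited as Remark 3.8 of \cite{EdPanPre2013}) and never proves. So the only question is whether your reconstruction is sound, and it is. Your ($\Leftarrow$) direction is the standard argument: lift $\pi$ through an arbitrary covering $q$ using the lifting criterion for simply connected sources, prove the lift is a homomorphism by comparing the two lifts $\left(x,y\right)\mapsto\beta\left(xy\right)$ and $\left(x,y\right)\mapsto\beta\left(x\right)\beta\left(y\right)$ on the connected space $\mathcal{U}\times\mathcal{U}$ (a nice use of Corollary \ref{C:CartProdConn}), note $\ker\beta\subseteq\ker\pi$ has dimension zero and that the image is an open --- hence, being a compatible locally definable subgroup of a connected group, full --- subgroup, and close with Fact \ref{F:Thm3.6}. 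Your ($\Rightarrow$) direction correctly plays the universal property of $\pi$ against the universal covering homomorphism $\rho$ of $\mathcal{U}$: composing, invoking Fact \ref{F:Thm3.6} for $\pi\circ\rho$, and using uniqueness of lifts twice to show $\rho$ is an isomorphism, after which Fact \ref{F:EdThm3.11} identifies $\pi_{1}\left(\mathcal{U}\right)\cong\ker\rho=\left\{0\right\}$. The two external inputs you flag --- existence of lifts out of connected simply connected ld-spaces and uniqueness of lifts out of connected ld-spaces agreeing at a point --- are genuinely not consequences of what this paper records, but they are established in the sources you name (the Edmundo--Eleftheriou construction of the universal cover and the Baro--Otero locally definable homotopy theory), and your honesty about where they come from is exactly right; with those citations made precise, your argument is a complete and correct proof of the quoted fact.
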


\begin{rem}\label{R:PropCover}

Let $\mathcal{U}$, $\mathcal{V}$ be connected locally definable
groups, and $\theta:\mathcal{U}\rightarrow\mathcal{V}$ a locally
definable covering homomorphism. Then

\begin{enumerate}[(i)]

\item $\mathcal{U}$ is abelian if and only if $\mathcal{V}$ is
abelian.

\item Assume that $\mathcal{V}$ is abelian. Then $\mathcal{U}$ is divisible
if and only if $\mathcal{V}$ is divisible.

\end{enumerate}

\end{rem}

\begin{proof}

(i) Clearly, if $\mathcal{U}$ is abelian, by the surjectiveness of
$\theta$, $\mathcal{V}$ is abelian. Now, assume that $\mathcal{V}$
is abelian, and let $\pi:\widetilde{\mathcal{V}}\rightarrow\mathcal{V}$
be the o-minimal universal covering homomorphism of $\mathcal{V}$.
Then there is a locally definable covering
homomorphism $\beta:\widetilde{\mathcal{V}}\rightarrow\mathcal{U}$
such that $\pi=\theta\circ\beta$. Since $\mathcal{V}$ is abelian,
so is $\widetilde{\mathcal{V}}$, then, by going through $\beta$, $\mathcal{U}$ is also abelian.

(ii) It is clear that if $\mathcal{U}$ is divisible, by the surjectiveness
of $\theta$, $\mathcal{V}$ is divisible. The another implication needs
the abelianness of the groups, and it is \cite[Proposition 5.13]{BerarEdMamino}.

\end{proof}

\begin{fact}\label{F:UniCovIsTF}\cite[Proposition 5.14]{BerarEdMamino}
The o-minimal universal covering group of a connected abelian divisible locally definable group is divisible and torsion free.
\end{fact}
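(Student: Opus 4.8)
The plan is to treat the two conclusions separately. Write $\pi\colon\widetilde{\mathcal{V}}\rightarrow\mathcal{V}$ for the o-minimal universal covering homomorphism of the given connected abelian divisible locally definable group $\mathcal{V}$. First I would dispose of divisibility: since $\mathcal{V}$ is abelian, Remark~\ref{R:PropCover}(i) forces $\widetilde{\mathcal{V}}$ to be abelian as well, and then Remark~\ref{R:PropCover}(ii), applied to the covering homomorphism $\pi$, transfers divisibility from $\mathcal{V}$ to $\widetilde{\mathcal{V}}$. Thus the whole content of the statement is the assertion that $\widetilde{\mathcal{V}}$ is \emph{torsion free}, and this is where I would spend the effort.

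For torsion-freeness, fix $n\geq 1$ and consider the $n$-th power map $\mu_n\colon\widetilde{\mathcal{V}}\rightarrow\widetilde{\mathcal{V}}$, $\mu_n(x)=x^{n}$. Because $\widetilde{\mathcal{V}}$ is abelian this is a locally definable homomorphism, its kernel is exactly the $n$-torsion subgroup $\widetilde{\mathcal{V}}[n]$, and by the divisibility just established it is surjective. The strategy is to show that $\mu_n$ is in fact a locally definable covering homomorphism of $\widetilde{\mathcal{V}}$ onto itself, and then to read off from the simple connectedness of $\widetilde{\mathcal{V}}$ that this covering has trivial kernel.

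To run this, the crucial step is to check that $\ker(\mu_n)$ has dimension zero; granting this, Fact~\ref{F:Thm3.6} immediately yields that $\mu_n$ is a locally definable covering homomorphism. Now $\widetilde{\mathcal{V}}$, being the o-minimal universal cover of $\mathcal{V}$, is simply connected, i.e. $\pi_{1}(\widetilde{\mathcal{V}})=\{0\}$ by Fact~\ref{F:UnvCovIFFisS.C.}; hence, again by Fact~\ref{F:UnvCovIFFisS.C.}, the covering homomorphism $\mu_n\colon\widetilde{\mathcal{V}}\rightarrow\widetilde{\mathcal{V}}$ is itself an o-minimal universal covering homomorphism of $\widetilde{\mathcal{V}}$. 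Fact~\ref{F:EdThm3.11} then identifies its kernel with $\pi_{1}(\widetilde{\mathcal{V}})=\{0\}$, so $\mu_n$ is injective and $\widetilde{\mathcal{V}}$ has no element of order $n$. Letting $n$ range over all positive integers gives torsion-freeness, and combined with the first paragraph this completes the proof.

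I expect the zero-dimensionality of $\ker(\mu_n)$ to be the main obstacle, as the remainder is a formal consequence of the covering-space machinery already assembled. The natural route is the fibre-dimension formula for the surjective locally definable homomorphism $\mu_n$: each fibre of $\mu_n$ is a coset of $\ker(\mu_n)$, so one expects $\dim\widetilde{\mathcal{V}}=\dim\widetilde{\mathcal{V}}+\dim\ker(\mu_n)$ and hence $\dim\ker(\mu_n)=0$. The delicate point is to justify this identity in the locally definable rather than merely definable setting; I would do so by restricting $\mu_n$ to a definable generic neighbourhood of the identity of $\widetilde{\mathcal{V}}$ (recall $\widetilde{\mathcal{V}}$ has finite dimension) and applying the dimension theory of definable maps there. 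Alternatively, one could argue directly that $\ker(\mu_n)=\widetilde{\mathcal{V}}[n]$, being a torsion locally definable subgroup of a group definable in an o-minimal structure, must have dimension zero.
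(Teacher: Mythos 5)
The paper itself contains no proof of this statement: it is imported verbatim as Fact \ref{F:UniCovIsTF} from \cite[Proposition 5.14]{BerarEdMamino}, so there is no internal argument to compare yours against, and I judge it on its own merits. Your skeleton is correct and stays inside the toolkit the paper does set up: divisibility of $\widetilde{\mathcal{V}}$ follows from Remark \ref{R:PropCover} exactly as you say; and once one knows $\dim\ker(\mu_n)=0$, Fact \ref{F:Thm3.6} makes $\mu_n$ a locally definable covering homomorphism, Fact \ref{F:UnvCovIFFisS.C.} (applied first to $\pi$ to get $\pi_1(\widetilde{\mathcal{V}})=\{0\}$, then to $\mu_n$) makes $\mu_n$ the o-minimal universal covering homomorphism of $\widetilde{\mathcal{V}}$, and Fact \ref{F:EdThm3.11} identifies $\ker(\mu_n)\cong\pi_1(\widetilde{\mathcal{V}})=\{0\}$. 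This is, in effect, the paper's own Claim \ref{C:tf-is-sc} run in the reverse direction with the same power map.

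The genuine gap is the step you yourself flag, $\dim\ker(\mu_n)=0$, and neither of your two suggested routes closes it as written. Your alternative route asserts that $\widetilde{\mathcal{V}}[n]$, ``being a torsion locally definable subgroup of a group definable in an o-minimal structure,'' has dimension zero; but $\widetilde{\mathcal{V}}$ is only locally definable, and the standard argument behind that slogan (the $n$-torsion of an abelian \emph{definable} group is a definable subgroup whose definably connected component is divisible, hence trivial, hence the torsion is finite) genuinely uses definability of the group; for locally definable groups this claim is precisely what must be proved, not a fact one can quote. Your first route invokes ``a definable generic neighbourhood of the identity of $\widetilde{\mathcal{V}}$,'' but such a set need not exist: by Fact \ref{F:EqvU00-I}, a connected abelian definably generated group has a definable generic subset if and only if $\widetilde{\mathcal{V}}^{00}$ exists, equivalently $\widetilde{\mathcal{V}}$ covers a definable group, and this is not part of the hypotheses. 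Fortunately genericity is not needed, only full dimension. Write $\widetilde{\mathcal{V}}=\bigcup_{i}U_{i}$ with the $U_{i}$ definable; since $\mu_n$ is surjective and the (finite) dimension of a locally definable set is the attained supremum of the dimensions of its definable subsets, some definable $U$ satisfies $\dim\mu_n\left(U\right)=\dim\widetilde{\mathcal{V}}=m$. Choose also a definable $Y\subseteq\ker(\mu_n)$ with $\dim Y=\dim\ker(\mu_n)=d$. Then $UY$ is definable, $\mu_n\left(UY\right)=\mu_n\left(U\right)$, and every fibre of $\mu_n\mid_{UY}$ over a point $\mu_n\left(u\right)$ with $u\in U$ contains $uY$ and so has dimension at least $d$; the o-minimal fibre-dimension inequality for definable maps then gives
\begin{equation*}
m\geq\dim\left(UY\right)\geq\dim\mu_n\left(U\right)+d=m+d,
\end{equation*}
so $d=0$. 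With this replacement for the flagged step, your proof is complete.
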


\begin{claim}\label{C:tf-is-sc}

Let $\mathcal{U}$ be a connected locally definable group covering
an abelian connected definable group $G$. If $\mathcal{U}$ is torsion
free, then $\mathcal{U}$ is simply connected.

\end{claim}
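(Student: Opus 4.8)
The plan is to show that $\pi_{1}(\mathcal{U})$ is trivial; since $\mathcal{U}$ is connected and therefore path connected, this yields that $\mathcal{U}$ is simply connected. First I would record the algebraic structure of the groups in play. An abelian definably connected definable group is divisible: the multiplication-by-$n$ map is a definable endomorphism of $G$ with finite kernel, so its image is a clopen full-dimensional subgroup of the definably connected $G$, hence all of $G$. Thus $G$ is abelian and divisible. By Remark \ref{R:PropCover}(i) the covering homomorphism $\theta\colon\mathcal{U}\to G$ forces $\mathcal{U}$ to be abelian, and by Remark \ref{R:PropCover}(ii) it then forces $\mathcal{U}$ to be divisible. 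Consequently, by Fact \ref{F:UniCovIsTF}, the o-minimal universal covering group $\widetilde{\mathcal{U}}$ is divisible and torsion free. Writing $\pi\colon\widetilde{\mathcal{U}}\to\mathcal{U}$ for the universal covering homomorphism, Fact \ref{F:EdThm3.11} identifies $\ker(\pi)$ with $\pi_{1}(\mathcal{U})$, so $\mathcal{U}\cong\widetilde{\mathcal{U}}/\pi_{1}(\mathcal{U})$ as abstract groups, with $\widetilde{\mathcal{U}}$ divisible and torsion free.

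The key step --- and the place where the hypothesis that $\mathcal{U}$ covers a \emph{definable} group is used --- is that $\pi_{1}(\mathcal{U})$ is finitely generated. To see this I would form the composite $\theta\circ\pi\colon\widetilde{\mathcal{U}}\to G$, which is again a locally definable covering homomorphism between connected locally definable groups. Since $\pi$ is the universal covering homomorphism of $\mathcal{U}$, Fact \ref{F:UnvCovIFFisS.C.} gives $\pi_{1}(\widetilde{\mathcal{U}})=\{0\}$, and the same fact then shows that $\theta\circ\pi$ is the o-minimal universal covering homomorphism of $G$; hence $\ker(\theta\circ\pi)\cong\pi_{1}(G)$ by Fact \ref{F:EdThm3.11}. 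As $\ker(\pi)\subseteq\ker(\theta\circ\pi)$, this exhibits $\pi_{1}(\mathcal{U})$ as a subgroup of $\pi_{1}(G)$. Because $\pi_{1}(G)$ is finitely generated for the definable group $G$ (in fact free abelian of finite rank), its subgroup $\pi_{1}(\mathcal{U})$ is finitely generated as well.

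Finally I would derive a contradiction from $\pi_{1}(\mathcal{U})\neq\{0\}$. Set $L=\pi_{1}(\mathcal{U})=\ker(\pi)$, viewed as a subgroup of $\widetilde{\mathcal{U}}$. A nonzero finitely generated abelian group is not divisible, so there is a prime $p$ with $pL\neq L$; choose $\ell\in L\setminus pL$. Using the divisibility of $\widetilde{\mathcal{U}}$, pick $x\in\widetilde{\mathcal{U}}$ with $px=\ell$. Then $x\notin L$, for otherwise $\ell=px\in pL$; hence $\pi(x)$ is a nonzero element of $\mathcal{U}$, while $p\,\pi(x)=\pi(\ell)=e_{\mathcal{U}}$ since $\ell\in\ker(\pi)$. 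Thus $\pi(x)$ is a nontrivial $p$-torsion element of $\mathcal{U}$, contradicting the torsion freeness of $\mathcal{U}$. Therefore $L=\pi_{1}(\mathcal{U})=\{0\}$, and $\mathcal{U}$ is simply connected. The main obstacle is precisely the finite generation step: the whole argument rests on transporting the finiteness of $\pi_{1}(G)$ through the covering $\theta$, which is exactly where definability (rather than mere local definability) of the base group is indispensable.
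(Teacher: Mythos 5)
Your proof is correct, but it takes a genuinely different route through the middle of the argument than the paper does. Both proofs share the same endpoints: they begin exactly as you do ($G$ divisible, hence $\mathcal{U}$ abelian and divisible by Remark \ref{R:PropCover}), and they end by embedding $\pi_{1}(\mathcal{U})$ into $\pi_{1}(G)\cong\mathbb{Z}^{s}$ (\cite[Theorem 2.1]{OtEd04}) and observing that the resulting subgroup must be trivial. The paper, however, never leaves $\mathcal{U}$: divisibility plus torsion-freeness make each power map $p_{k}\colon x\mapsto x^{k}$ a bijective locally definable homomorphism, hence a self-covering of $\mathcal{U}$; functoriality of the o-minimal fundamental group (\cite[Corollary 6.12]{BarOter} or \cite[Proposition 4.6]{ED05}) makes $p_{k,*}$ injective, and the identification of $\mathcal{U}[k]=\{0\}$ with $\pi_{1}(\mathcal{U})/p_{k,*}(\pi_{1}(\mathcal{U}))$ shows $\pi_{1}(\mathcal{U})$ is divisible; the induced map $\theta_{*}$ then embeds this divisible group into $\pi_{1}(G)$. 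You instead pass upstairs to the universal cover: Fact \ref{F:UniCovIsTF} gives divisibility and torsion-freeness of $\widetilde{\mathcal{U}}$, Fact \ref{F:EdThm3.11} identifies $\pi_{1}(\mathcal{U})$ with $\ker(\pi)$, the composite $\theta\circ\pi$ embeds that kernel into $\pi_{1}(G)$, and your final contradiction is pure group theory (in effect it shows $\ker(\pi)$ is divisible, since any $p$-th root in $\widetilde{\mathcal{U}}$ of an element of $\ker(\pi)$ must itself lie in $\ker(\pi)$ by torsion-freeness of $\mathcal{U}$). What your version buys is that it avoids the homotopy-theoretic machinery entirely --- no induced maps on $\pi_{1}$, no relation between the kernel of a covering and the cokernel on fundamental groups --- at the cost of invoking the existence and structure of $\widetilde{\mathcal{U}}$. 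The one assertion you leave unjustified is that $\theta\circ\pi$ is again a locally definable covering homomorphism; this is not automatic for abstract covering maps, but here it follows immediately from Fact \ref{F:Thm3.6}: $\theta\circ\pi$ is a surjective locally definable homomorphism whose kernel $\pi^{-1}(\ker\theta)$ is zero-dimensional, since $\pi$ and $\theta$ are local homeomorphisms with discrete kernels. With that one-line justification added (and a citation for the standard facts that $G$ is divisible and $\pi_{1}(G)$ is free abelian of finite rank, which the paper also takes from the literature), your argument is complete.
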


\begin{proof}

Since $G$ is an abelian (definably) connected definable group, then $G$
is divisible (see, e.g., the proof of \cite[Theorem 2.1]{OtEd04}).
Then $\mathcal{U}$ is also abelian and divisible, by Remark \ref{R:PropCover}.
So the map $p_{k}:\mathcal{U}\rightarrow\mathcal{U}:x\mapsto x^{k}$
is a bijective locally definable homomorphism for any $k\in\mathbb{N}$,
so in particular $p_{k}$ is a locally definable covering homomorphism.
Thus, by \cite[Corollary 6.12]{BarOter} or \cite[Proposition 4.6]{ED05},
the induced map $p_{k,*}:\pi_{1}\left(\mathcal{U}\right)\rightarrow\pi_{1}\left(\mathcal{U}\right):\left[\gamma\right]\mapsto\left[p_{k}\circ\gamma\right]$
is an injective homomorphism; therefore, the $k$-torsion group $\mathcal{U}\left[k\right]$
of $\mathcal{U}$ satisfies that $\left\{ 0\right\} =\mathcal{U}\left[k\right]=\ker\left(p_{k,*}\right)\cong\pi_{1}\left(\mathcal{U}\right)/p_{k,*}\left(\pi_{1}\left(\mathcal{U}\right)\right)$.
Then, $\pi_{1}\left(\mathcal{U}\right)=\left(\pi_{1}\left(\mathcal{U}\right)\right)^{k}$
for every $k\in\mathbb{N}$, thus $\pi_{1}\left(\mathcal{U}\right)$
is a divisible group.

Now, let $\theta:\mathcal{U}\rightarrow G$ be a locally definable
covering homomorphism, and $\theta_{*}:\pi_{1}\left(\mathcal{U}\right)\rightarrow\pi_{1}\left(G\right)$
its induced injective homomorphism, so $\theta_{*}\left(\pi_{1}\left(\mathcal{U}\right)\right)$
is a divisible subgroup of $\pi_{1}\left(G\right)$. By \cite[Theorem 2.1]{OtEd04},
there is $s\in\mathbb{N}$ such that $\pi_{1}\left(G\right)\cong\mathbb{Z}^{s}$,
then the only possible divisible subgroup of $\pi_{1}\left(G\right)$
is the trivial one, so $\pi_{1}\left(\mathcal{U}\right)=\left\{ 0\right\}$.

\end{proof}

From Fact \ref{F:UniCovIsTF} and Claim \ref{C:tf-is-sc}, we have that if $G$ is a connected abelian definable group, $G$ is torsion free if and only if $G$ is simply connected.

\begin{cor}\label{C:tf-is-UniCov}
Let $\mathcal{U}$ be a connected torsion free locally definable group,
$G$ an abelian connected definable group, and $\theta:\mathcal{U}\rightarrow G$
a locally definable covering homomorphism. Then $\theta:\mathcal{U}\rightarrow G$
is the o-minimal universal covering homomorphism of $G$.
\end{cor}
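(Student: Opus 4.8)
The plan is to chain together the two results that immediately precede this corollary, since between them they reduce the statement to a matter of matching hypotheses. First I would note that $\mathcal{U}$ is a connected, torsion free locally definable group that covers the abelian connected definable group $G$ via the locally definable covering homomorphism $\theta$. These are precisely the hypotheses of Claim \ref{C:tf-is-sc}, so applying it directly gives that $\mathcal{U}$ is simply connected; in particular $\pi_{1}\left(\mathcal{U}\right)=\left\{ 0\right\}$.

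Next I would invoke Fact \ref{F:UnvCovIFFisS.C.}, which characterizes the o-minimal universal covering homomorphism among locally definable covering homomorphisms between connected locally definable groups solely by the vanishing of the fundamental group of the source. Since $G$ is a connected definable group, it is in particular a connected locally definable group; $\mathcal{U}$ is connected by hypothesis; and $\theta$ is by assumption a locally definable covering homomorphism between them. Having established $\pi_{1}\left(\mathcal{U}\right)=\left\{ 0\right\}$ in the first step, Fact \ref{F:UnvCovIFFisS.C.} then yields that $\theta:\mathcal{U}\rightarrow G$ is the o-minimal universal covering homomorphism of $G$, as desired.

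Because the two cited results carry all of the genuine content, I do not expect any substantive obstacle; the corollary is essentially their juxtaposition. The only points needing care are bookkeeping of the hypotheses: confirming that a connected definable group qualifies as a connected locally definable group so that Fact \ref{F:UnvCovIFFisS.C.} is applicable, and verifying that the same covering homomorphism $\theta$ serves as the input both to Claim \ref{C:tf-is-sc} (to get simple connectedness of $\mathcal{U}$) and to Fact \ref{F:UnvCovIFFisS.C.} (to get the universality conclusion).
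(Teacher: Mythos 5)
Your proposal is correct and is essentially identical to the paper's own proof: the paper likewise applies Claim \ref{C:tf-is-sc} to conclude that $\mathcal{U}$ is simply connected and then invokes Fact \ref{F:UnvCovIFFisS.C.} (i.e., \cite[Remark 3.8]{EdPanPre2013}) to deduce that $\theta$ is the o-minimal universal covering homomorphism of $G$. Your hypothesis bookkeeping, including noting that $G$ is in particular a connected locally definable group and that $\pi_{1}\left(\mathcal{U}\right)=\left\{0\right\}$, matches the intended argument.
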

\begin{proof}
By \cite[Remark 3.8]{EdPanPre2013} and Claim \ref{C:tf-is-sc}, $\mathcal{U}$ is simply connected. So, by Fact \ref{F:UnvCovIFFisS.C.}, $\theta:\mathcal{U}\rightarrow G$
is the o-minimal universal covering homomorphism of $G$.
\end{proof}

\section{Abelian definably generated groups, convex sets, and covers of definable groups}\label{S:4}

In this section we present some properties of the abelian $\bigvee$-definable groups in relation to their smallest type-definable subgroup of index smaller than $\kappa$, if it exists, and to some generic subsets and convex sets.

Note that if $\mathcal{U}$ is a connected $\bigvee$-definable group with $\mathcal{U}^{00}$, then $\mathcal{U}$ has a definable left-generic set, thus, by Fact 2.3 in \cite{PPant12}, $\mathcal{U}$ is definably generated, and hence locally definable.

In the first part of this section, we point out some central facts about the existence of $\mathcal{U}^{00}$
for an abelian definably generated group $\mathcal{U}$ as well as
necessary and sufficient conditions for being a cover of a definable
group. The first of these facts gathers Proposition 3.5 and Theorem 3.9 of Peterzil and Eleftheriou's work in \cite{PPant12}.

\begin{fact}\label{F:EqvU00-I}\cite{PPant12}
Let $\mathcal{U}$ be a connected abelian definably generated group
of dimension $d$. Then:

\begin{enumerate}[(i)]

\item $\mathcal{U}$ covers a definable group if and only if the
subgroup $\mathcal{U}^{00}$ exists if and only if $\mathcal{U}$
contains a definable generic set.

\item If $\mathcal{U}^{00}$ exists, then $\mathcal{U}^{00}$ is
torsion free, $\mathcal{U}$ and $\mathcal{U}^{00}$ are divisible,
and $\mathcal{U}/\mathcal{U}^{00}$ is a Lie group isomorphic, as
a topological group, to $\mathbb{R}^{k}\times\mathbb{T}^{r}$ for
some $k,r\in\mathbb{N}$ with $k+r\leq d$, where $\mathbb{T}$ is
the circle group.

\end{enumerate}

\end{fact}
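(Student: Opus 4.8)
The plan is to establish the three-fold equivalence in (i) by a short cycle of implications and then to analyse the logic-topology quotient $\mathcal{U}/\mathcal{U}^{00}$ to obtain (ii). Throughout I would use the standard o-minimal facts that every definable group $G$ carries a definable generic set and that $G^{00}$ exists, together with the fact already invoked in the excerpt that an abelian definably connected definable group is divisible. I read ``$X$ is generic in $\mathcal{U}$'' in the $\bigvee$-definable sense: $X\subseteq\mathcal{U}$ is definable and for every definable $Y\subseteq\mathcal{U}$ there are finitely many $g_{1},\dots,g_{n}\in\mathcal{U}$ with $Y\subseteq\bigcup_{i}g_{i}X$.

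For (i), I would run the cycle $(a)\Rightarrow(b)\Rightarrow(c)\Rightarrow(a)$. For $(a)\Rightarrow(b)$: given a locally definable covering homomorphism $\pi:\mathcal{U}\to G$ onto a definable group $G$, the kernel has dimension zero, hence is bounded (of size $<\kappa$); since $G^{00}$ is a bounded-index type-definable subgroup of $G$, the preimage $\pi^{-1}(G^{00})$ is a bounded-index type-definable subgroup of $\mathcal{U}$, and this suffices to force the smallest such subgroup $\mathcal{U}^{00}$ to exist. For $(b)\Rightarrow(c)$: once $\mathcal{U}^{00}$ exists, the logic topology makes the quotient map $q:\mathcal{U}\to\mathcal{U}/\mathcal{U}^{00}$ a continuous homomorphism into a locally compact group sending each definable subset to a relatively compact set; choosing a definable $X$ with $q(X)$ a neighbourhood of the identity, any definable $Y$ has $q(Y)$ covered by finitely many translates of $q(X)$, and lifting through $q$ shows $X$ is generic. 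The substantial implication is $(c)\Rightarrow(a)$: from a definable generic set I would construct a dimension-zero locally definable subgroup $\Lambda\le\mathcal{U}$ whose quotient $\mathcal{U}/\Lambda$ is definable, following the fundamental-domain construction of Peterzil and Eleftheriou; the quotient map $\mathcal{U}\to\mathcal{U}/\Lambda$ then has discrete kernel, hence is a covering by Fact \ref{F:Thm3.6}.

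For (ii), assume $\mathcal{U}^{00}$ exists. I would first record that $\mathcal{U}^{00}$ is divisible and torsion free, which is the o-minimal behaviour of the infinitesimal subgroup in the abelian case. Then I would equip $\mathcal{U}/\mathcal{U}^{00}$ with the logic topology and argue that it is a connected (since $\mathcal{U}$ is connected), abelian, locally compact topological group which, by the o-minimal analysis of $\mathcal{U}^{00}$, is a Lie group of dimension at most $d=\dim\mathcal{U}$; the classification of connected abelian Lie groups then yields $\mathcal{U}/\mathcal{U}^{00}\cong\mathbb{R}^{k}\times\mathbb{T}^{r}$ with $k+r\le d$. Finally, divisibility of $\mathcal{U}$ follows formally: $\mathcal{U}$ is an abelian extension of the divisible group $\mathbb{R}^{k}\times\mathbb{T}^{r}$ by the divisible group $\mathcal{U}^{00}$, and such an extension is divisible.

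The main obstacle is the implication $(c)\Rightarrow(a)$ together with the identification of the Lie structure and the dimension bound in (ii): turning a single definable generic set into a genuine definable group quotient requires producing the right discrete subgroup $\Lambda$ and checking that the quotient is definable rather than merely locally definable, while pinning down $\mathcal{U}/\mathcal{U}^{00}$ as $\mathbb{R}^{k}\times\mathbb{T}^{r}$ rests on the o-minimal version of the Pillay-conjecture machinery controlling $\dim_{\mathrm{Lie}}(\mathcal{U}/\mathcal{U}^{00})\le\dim\mathcal{U}$. These two points are precisely the content of Proposition 3.5 and Theorem 3.9 of \cite{PPant12}, and I would lean on that development for the hard estimates.
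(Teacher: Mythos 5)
First, some context: the paper itself gives no proof of this statement; it is imported verbatim as a Fact, and the surrounding text says it ``gathers Proposition 3.5 and Theorem 3.9'' of \cite{PPant12}. So your decision to lean on that reference for the two hard points --- the implication from a definable generic set to covering a definable group, and the identification of $\mathcal{U}/\mathcal{U}^{00}$ with $\mathbb{R}^{k}\times\mathbb{T}^{r}$ together with the dimension bound --- is consistent with the paper's treatment, and your sketches of (b)$\Rightarrow$(c) and of the divisibility of $\mathcal{U}$ in (ii) are essentially sound.

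However, the one implication you argue in detail, (a)$\Rightarrow$(b), fails as written. You claim that $\pi^{-1}(G^{00})$ is a type-definable subgroup of $\mathcal{U}$ of bounded index. Bounded index is fine, but type-definability breaks down in exactly the interesting case. A type-definable set is an intersection of fewer than $\kappa$ definable sets, hence is contained in a single definable subset of $\mathcal{U}$. On the other hand, $\pi^{-1}(G^{00})\supseteq\ker(\pi)$, and $\ker(\pi)$ is a compatible subgroup of dimension zero, so its intersection with every definable subset of $\mathcal{U}$ is finite. Thus if $\ker(\pi)$ is infinite --- the only interesting case, since a locally definable group covering a definable group with finite kernel is itself definable --- then $\pi^{-1}(G^{00})$ is contained in no definable set and is therefore not type-definable. (Concretely, when $\mathcal{U}$ is the o-minimal universal cover of a definably compact abelian definable group $G$, one has $\ker(\pi)\cong\mathbb{Z}^{\dim G}$ and $\pi^{-1}(G^{00})$ is a disjoint union of $\ker(\pi)$-many ``sheets''.) What actually witnesses the existence of $\mathcal{U}^{00}$ is a single sheet of this preimage, not the whole preimage: one restricts $\pi$ to a definable set $W\ni e_{\mathcal{U}}$ mapped homeomorphically onto a (simply connected, using \cite[Thm. 2.2]{Berar09}) definable neighborhood of $G^{00}$ so that $\pi\mid_{W}$ is a local homomorphism in both directions --- this is what Proposition \ref{P:CovPties2} provides --- and then the image of $G^{00}$ under $(\pi\mid_{W})^{-1}$ is a type-definable subgroup of bounded index, exactly as in the argument of Proposition \ref{P:R1}(ii). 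Alternatively, one can route the cycle as (a)$\Rightarrow$(c)$\Rightarrow$(b) by pulling back a definable generic set of $G$ through finitely many sheets of the covering. Either repair requires a genuine idea beyond your shortcut, which is why this implication, too, ultimately belongs to the machinery of \cite{PPant12} rather than to a two-line preimage argument.
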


\begin{dfn}\label{D:convexHull} \cite[Def. 5.3]{BerarEdMamino}
Let $G$ be an abelian group and $X\subseteq G$.

\begin{enumerate}[(i)]

\item $X$ is called \textit{convex} if for every $a,b\in X$ and
$n,m\in\mathbb{N}$, not both null, $X$ contains every solution $x\in X$
of the equation $x^{m+n}=a^{m}b^{n}$.

\item The \textit{convex hull} $ch\left(X\right)$ of $X$ is the set
of all $x\in G$ such that $x^{n}=a_{1}\cdots a_{n}$ for some $n\in\mathbb{N}$
and some $a_{1},\ldots,a_{n}\in X$ not necessarily distinct.

\item A locally definable abelian group $\mathcal{U}$ has \textit{definably bounded convex hulls}
if for all definable $X\subseteq\mathcal{U}$, there is a definable
$Y\subseteq\mathcal{U}$ such that $ch\left(X\right)\subseteq Y$.

\end{enumerate}

\end{dfn}

If $\mathcal{U}$ is a divisible torsion free abelian group, then it is easy to prove that $X\subseteq\mathcal{U}$ is convex if and only if $\prod_{n}X=X^{n}$ for every $n\in\mathbb{N}$.

\begin{fact}\label{F:EqvU00-II}\cite[Theorem 5.6]{BerarEdMamino}
Let $\mathcal{U}$ be a connected abelian definably generated group.
The following are equivalent:

\begin{enumerate}[(i)]

\item $\mathcal{U}$ covers a definable group.

\item For every definable $X\subseteq\mathcal{U}$, there is a definable
$Y\subseteq\mathcal{U}$ such that $\prod_{n}X\subseteq Y^{n}$ for all $n\in\mathbb{N}$.

\item $\mathcal{U}$ is divisible and has definably bounded convex
hulls.

\end{enumerate}
\end{fact}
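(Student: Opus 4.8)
The plan is to prove the cycle of implications $(i)\Rightarrow(iii)\Rightarrow(ii)\Rightarrow(i)$. Throughout I would use Fact~\ref{F:EqvU00-I} to identify $(i)$ with the existence of $\mathcal{U}^{00}$ and with the existence of a definable generic subset of $\mathcal{U}$, and recall that whenever these hold $\mathcal{U}$ is divisible; I would keep the definition of $ch(X)$ from Definition~\ref{D:convexHull} and the observation preceding the statement (convexity as $\prod_{n}X=X^{n}$) on hand as auxiliary tools. Running a single cycle is what lets me avoid ever having to extract divisibility from $(ii)$ in isolation, which is awkward: in $(iii)$ it is assumed, and a group satisfying $(ii)$ inherits it transitively through $(ii)\Rightarrow(i)\Rightarrow(iii)$.

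The implication $(iii)\Rightarrow(ii)$ is the quick one. Given a definable $X\subseteq\mathcal{U}$, use definably bounded convex hulls to pick a definable $Y$ with $ch(X)\subseteq Y$. Fix $n$ and $x\in\prod_{n}X$, say $x=a_{1}\cdots a_{n}$ with $a_{i}\in X$. Since $\mathcal{U}$ is divisible there is $y\in\mathcal{U}$ with $y^{n}=x=a_{1}\cdots a_{n}$; but this very equation places $y$ in $ch(X)\subseteq Y$, whence $x=y^{n}\in Y^{n}$. As $ch(X)$ already witnesses all $n$ simultaneously, the single set $Y$ works for every $n$, giving $(ii)$.

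For $(i)\Rightarrow(iii)$, divisibility is immediate from Fact~\ref{F:EqvU00-I}(ii), so the content is definably bounded convex hulls. I would fix the covering homomorphism $\theta\colon\mathcal{U}\rightarrow G$ onto a definable group $G$, with discrete kernel $\Lambda\cong\mathbb{Z}^{s}$, and a definable $D$ with $\mathcal{U}=\Lambda\cdot D$ and $\theta|_{D}$ injective. A definable $X$ lies in finitely many translates $\lambda_{1}D,\dots,\lambda_{m}D$ by saturation, so along the products $\prod_{n}X$ the $\Lambda$-labels grow at most linearly in $n$. If $x\in ch(X)$, say $x^{n}=a_{1}\cdots a_{n}\in\prod_{n}X$, then in the divisible group the $\Lambda$-label of $x^{n}$ is essentially $n$ times that of $x$; matching this against the linear growth forces the label of $x$ into a fixed finite range independent of $n$. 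Hence $ch(X)$ meets only finitely many translates of $D$ and is contained in a definable set. Making this ``$\Lambda$-label'' bookkeeping precise, via the structure $\mathcal{U}/\mathcal{U}^{00}\cong\mathbb{R}^{k}\times\mathbb{T}^{r}$ from Fact~\ref{F:EqvU00-I}(ii) (the torus factor being compact and convex hulls of bounded sets in $\mathbb{R}^{k}$ bounded), is the technical part of this step.

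The remaining implication $(ii)\Rightarrow(i)$ is where I expect the main obstacle. Writing $\mathcal{U}=\langle X_{0}\rangle$ for a definable symmetric $X_{0}\ni e_{\mathcal{U}}$, so that $\mathcal{U}=\bigcup_{n}\prod_{n}X_{0}$ is an increasing union, I would apply $(ii)$ to $X_{0}$ to get a definable $Y$ (which we may take symmetric with $X_{0}\subseteq Y$) satisfying $\prod_{n}X_{0}\subseteq Y^{n}$ for all $n$, whence $\mathcal{U}=\bigcup_{n}Y^{n}$. By Fact~\ref{F:EqvU00-I} it then suffices to exhibit a definable generic subset of $\mathcal{U}$, and the candidate is $Y$ (or $Y^{2}$): any definable $W$ lies in some $\prod_{N}X_{0}\subseteq Y^{N}$ by saturation, so genericity reduces to covering $Y^{N}$ by finitely many translates of $Y$. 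The heart of the matter is that the growth condition $\prod_{n}X_{0}\subseteq Y^{n}$ is exactly calibrated to force this finite-translate covering; verifying it rigorously — equivalently, building $\mathcal{U}^{00}$ directly out of the bounded growth rather than assuming it — is the crux, and is where I would expect to spend most of the effort.
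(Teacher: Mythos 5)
First, a framing point: the paper does not prove this statement at all --- it is quoted as a Fact from \cite[Theorem 5.6]{BerarEdMamino} (indeed, Proposition \ref{P:ImpProp}(iii) in this paper is itself proved by citing it, so that proposition is off-limits to you on pain of circularity). Your proposal must therefore stand on its own, and judged that way it is incomplete. What does hold up: your (iii)$\Rightarrow$(ii) is correct and complete --- given $x=a_{1}\cdots a_{n}\in\prod_{n}X$, divisibility yields $y$ with $y^{n}=a_{1}\cdots a_{n}$, this very equation places $y\in ch\left(X\right)\subseteq Y$ by Definition \ref{D:convexHull}(ii), so $x\in Y^{n}$. Your (i)$\Rightarrow$(iii) is a plausible sketch: divisibility is Fact \ref{F:EqvU00-I}(ii), and the bookkeeping can be made rigorous either through a definable set $D$ with $\mathcal{U}=\Lambda\cdot D$ or through the quotient $\mathcal{U}/\mathcal{U}^{00}\cong\mathbb{R}^{k}\times\mathbb{T}^{r}$; but you still owe the nontrivial lemma that a subset of $\mathcal{U}$ whose image in $\mathcal{U}/\mathcal{U}^{00}$ is relatively compact is contained in a definable set (this can be extracted from arguments like those in Proposition \ref{P:ImpProp}(i)--(ii), which do not depend on the present Fact).

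The genuine gap is (ii)$\Rightarrow$(i), and it is not a small one. You reduce genericity of $Y$ to the claim that each set of $N$-th powers $Y^{N}$ is covered by finitely many translates of $Y$, and then assert that the growth condition $\prod_{n}X_{0}\subseteq Y^{n}$ is ``exactly calibrated'' to force this --- with no mechanism offered. This unverified claim is the entire mathematical content of the hard direction of the theorem: hypothesis (ii) controls $n$-th \emph{powers}, while genericity (hence $\mathcal{U}^{00}$, hence (i) via Fact \ref{F:EqvU00-I}) requires covering by group \emph{translates}, and bridging powers and translates is precisely what the proof in \cite{BerarEdMamino} has to accomplish (it goes through their structure theory of discrete compatible subgroups, not through your reduction). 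As written, your cycle establishes only (iii)$\Rightarrow$(ii) and a sketch of (i)$\Rightarrow$(iii). A further remark: your stated reason for running a single cycle --- that divisibility is ``awkward'' to extract from (ii) --- is mistaken. Since $e_{\mathcal{U}}\in X_{0}$, any $u\in\prod_{m}X_{0}\subseteq\prod_{mn}X_{0}\subseteq Y^{mn}$ is an $mn$-th power, hence an $n$-th power, so (ii) gives divisibility of $\mathcal{U}$ immediately. The obstruction in (ii)$\Rightarrow$(i) was never divisibility; it is the construction of a definable generic set (equivalently of $\mathcal{U}^{00}$) from the power-growth condition, and that construction is missing from your proposal.
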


The second part of this section is devoted to prove Proposition \ref{P:ImpProp}.

\begin{claim}\label{C:TK-C3}
Let $L$ be a topological group isomorphic, as a topological group,
to $\mathbb{R}^{k}\times\mathbb{T}^{r}$ for some $k,r\in\mathbb{N}$, where $\mathbb{T}$ is the circle group. Let $C\subseteq L$
be a compact neighbourhood of the identity element $e_{L}$ of $L$.
Then there is an increasing sequence $\left\{ n_{i}\right\} _{i}\subseteq\mathbb{N}$ such
that $C^{n_{i}}\subseteq C^{n_{i+1}}$ for every $i\in\mathbb{N}$,
and $L=\bigcup_{i\in\mathbb{N}}C^{n_{i}}$.
\end{claim}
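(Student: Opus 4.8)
The plan is to transport everything through the given topological isomorphism $L\cong\mathbb{R}^k\times\mathbb{T}^r$, writing $\mathbb{T}^r=(\mathbb{R}/\mathbb{Z})^r$ and $e_L=(0,0)$, and to exploit that the power map $x\mapsto x^{n}$ acts coordinatewise as multiplication by $n$. First I would fix a product metric (Euclidean on $\mathbb{R}^k$, the standard quotient metric on each circle) and use compactness of $C$ to get $R>0$ with $\pi(C)\subseteq\overline{B(0,R)}$, where $\pi\colon L\to\mathbb{R}^k$ is the projection; and use that $C$ is a neighbourhood of $e_L$ to fix $\rho>0$ and an open $U\ni 0$ in $\mathbb{T}^r$ with the product box $B(0,\rho)\times U\subseteq C$.

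The key elementary fact I would isolate is a root-approximation statement in $\mathbb{T}^r$: for every $\theta\in\mathbb{T}^r$ and every $n\in\mathbb{N}$ there is $\theta'\in\mathbb{T}^r$ with $n\theta'=\theta$ and $\lVert\theta'\rVert\le\sqrt{r}/(2n)$, obtained by taking the representative of $\theta$ in $[-1/2,1/2)^r$ and dividing it by $n$. In particular, once $n$ is large enough, all such closest $n$-th roots lie inside $U$.

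Then I would construct $\{n_i\}$ recursively. Start with $n_1=1$; given $n_i$, choose $n_{i+1}>n_i$ so large that $n_{i+1}\ge(R/\rho)\,n_i$ and $\sqrt{r}/(2n_{i+1})$ is smaller than the radius of $U$. To verify $C^{n_i}\subseteq C^{n_{i+1}}$, take $c\in C$ and define $d\in L$ whose $\mathbb{R}^k$-coordinate is $(n_i/n_{i+1})\,\pi(c)$ and whose $\mathbb{T}^r$-coordinate is a closest $n_{i+1}$-th root of $n_i$ times the $\mathbb{T}^r$-coordinate of $c$. By the growth condition the $\mathbb{R}^k$-coordinate has norm at most $(n_i/n_{i+1})R\le\rho$, so it lies in $B(0,\rho)$; by the root-approximation the $\mathbb{T}^r$-coordinate lies in $U$. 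Hence $d\in B(0,\rho)\times U\subseteq C$ and $d^{n_{i+1}}=c^{n_i}$, giving $c^{n_i}\in C^{n_{i+1}}$. For the union, given any $x=(v,\theta)\in L$, for all large $n$ the point $(v/n,\ \text{closest }n\text{-th root of }\theta)$ lies in $B(0,\rho)\times U\subseteq C$, so $x\in C^{n}$; since the $n_i$ strictly increase to infinity, $x\in C^{n_i}$ for large $i$, whence $L=\bigcup_{i}C^{n_i}$.

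The one genuine subtlety to flag is that $C$ is an \emph{arbitrary} compact neighbourhood, not a box, so when solving $d^{n_{i+1}}=c^{n_i}$ one cannot in general choose the two coordinates of $d$ independently while staying in $C$ itself. The remedy is precisely to force $d$ into the product sub-box $B(0,\rho)\times U\subseteq C$: the growth condition $n_{i+1}\ge(R/\rho)\,n_i$ controls the $\mathbb{R}^k$-coordinate and the root-approximation controls the $\mathbb{T}^r$-coordinate, so both constraints can be met simultaneously inside $C$. This is the point on which the nesting $C^{n_i}\subseteq C^{n_{i+1}}$ genuinely depends, and it is why a sufficiently fast-growing subsequence $\{n_i\}$, rather than $n_i=i$, is needed.
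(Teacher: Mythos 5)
Your proof is correct (up to one trivially fixable inequality, noted below), and while it shares the paper's overall skeleton---absorb a power of $C$ into a higher power of a small inner neighbourhood sitting inside $C$---the mechanism for the absorption step is genuinely different. The paper's proof asserts, as an unproved preliminary note about $\mathbb{R}^{k}\times\mathbb{T}^{r}$, that inside every compact neighbourhood of the identity there is a neighbourhood $\mathcal{O}$ whose powers are already nested ($\mathcal{O}^{n}\subseteq\mathcal{O}^{n+1}$) and exhaust the group; it then builds the sequence $\{n_{i}\}$ by a soft compactness argument: at stage $i$ the compact set $C^{n_{i-1}}\cup C^{i}$ is covered by finitely many of the nested sets $\mathcal{O}^{n}$, hence lies in a single $\mathcal{O}^{n_{i}}\subseteq C^{n_{i}}$, which yields both the nesting and (since $C^{i}\subseteq C^{n_{i}}$) the exhaustion of $L$. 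You instead work entirely in coordinates and solve $d^{n_{i+1}}=c^{n_{i}}$ explicitly, controlling the $\mathbb{R}^{k}$-coordinate by the growth condition $n_{i+1}\geq(R/\rho)n_{i}$ and the $\mathbb{T}^{r}$-coordinate by the closest-root estimate $\sqrt{r}/(2n)$. What your route buys is self-containedness: you prove the coordinate facts the paper only asserts, and you correctly diagnose why a fast-growing subsequence is genuinely needed (consecutive powers of an arbitrary compact neighbourhood need not be nested). What the paper's route buys is modularity: once the existence of one nested exhausting neighbourhood $\mathcal{O}\subseteq C$ is granted, its interleaving step is pure compactness and never touches the coordinates again, whereas your nesting step uses the product-box structure at every stage. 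The one slip: from $n_{i+1}\geq(R/\rho)n_{i}$ you only get $\|(n_{i}/n_{i+1})\pi(c)\|\leq\rho$, which places the point in the closed ball rather than in $B(0,\rho)$; either demand $n_{i+1}>(R/\rho)n_{i}$, or choose $\rho$ and $U$ at the outset so that $\overline{B(0,\rho)}\times U\subseteq C$.
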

\begin{proof}
First, note that in $\mathbb{R}^{k}\times\mathbb{T}^{r}$ every compact neighbourhood
$Y\subseteq\mathbb{R}^{k}\times\mathbb{T}^{r}$ of the identity element $e$ of $\mathbb{R}^{k}\times\mathbb{T}^{r}$, there is a neighbourhood
$X\subseteq Y$ of $e$ such that $\mathbb{R}^{k}\times\mathbb{T}^{r}=\bigcup_{n\in\mathbb{N}}X^{n}$
and $X^{n}\subseteq X^{n+1}$. Therefore, as $L$ and $\mathbb{R}^{k}\times\mathbb{T}^{r}$ are isomorphic as a topological groups, then there is a neighbourhood $\mathcal{O}\subseteq C$ of $e_{L}$ such that $L=\bigcup_{n\in\mathbb{N}}\mathcal{O}^{n}$,
and $\mathcal{O}^{n}\subseteq\mathcal{O}^{n+1}$ for every $n\in\mathbb{N}$.

Let us define the sequence $\left\{ n_{i}\right\} _{i}\subseteq\mathbb{N}$
inductively as follows.

Let $n_{1}=1$. Let us assume that $n_{i-1}$ is defined for $i\geq2$.
Since $C$ is compact, $C^{n_{i-1}}\cup C^{i}$ is compact, so $C^{n_{i-1}}\cup C^{i}\subseteq\bigcup_{n\in\mathbb{N}}\mathcal{O}^{n}$
yields the existence of finitely many natural numbers $i_{1},\ldots,i_{s}$
such that $C^{n_{i-1}}\cup C^{i}\subseteq\mathcal{O}^{i_{1}}\cup\ldots\cup\mathcal{O}^{i_{s}}$.
As $\mathcal{O}^{n}\subseteq\mathcal{O}^{n+1}$ for every $n\in\mathbb{N}$,
then $C^{n_{i-1}}\cup C^{i}\subseteq\mathcal{O}^{n_{i}}$ where $n_{i}=\max\left\{ i_{1},\ldots,i_{s},n_{i-1}\right\} $.

Finally, by the definition of the $n_{i}$'s and $\mathcal{O}\subseteq C$,
it follows directly that $C^{n_{i}}\subseteq C^{n_{i+1}}$ for every
$i\in\mathbb{N}$, and $L=\bigcup_{i\in\mathbb{N}}C^{n_{i}}$.
\end{proof}

\begin{prop}\label{P:ImpProp}
Let $\mathcal{U}$ be a connected abelian $\bigvee$-definable group
such that   $\mathcal{U}^{00}$ exists. Let $X\subseteq\mathcal{U}$
be a definable set such that $\mathcal{U}^{00}\subseteq X$ and $Z\subseteq\mathcal{U}$ a definable set. Then
\begin{enumerate}[(i)]
\item $\mathcal{U}=\bigcup_{n\in\mathbb{N}}X^{n}$.

\item There is $k\in\mathbb{N}$ such that $Z\subseteq X^{k}$.

\item There is $k\in\mathbb{N}$ such that the convex hull $\textrm{ch}\left(Z\right)$
of $Z$ is contained in $X^{k}$. If, moreover, $\mathcal{U}$ is
torsion free, then $\textrm{ch}\left(Z^{\frac{1}{k}}\right)\subseteq X$.

\end{enumerate}

\end{prop}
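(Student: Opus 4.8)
The plan is to reduce everything to the topological-group structure of the quotient $\mathcal{U}/\mathcal{U}^{00}$, which by Fact \ref{F:EqvU00-I}(ii) is isomorphic as a topological group to $\mathbb{R}^{k}\times\mathbb{T}^{r}$, and to exploit Claim \ref{C:TK-C3} applied to that quotient. Write $\pi:\mathcal{U}\rightarrow\mathcal{U}/\mathcal{U}^{00}=:L$ for the quotient map. Since $\mathcal{U}^{00}\subseteq X$, the set $C:=\pi(X)$ is a neighbourhood of $e_{L}$ in $L$ (because $X$ contains a definable generic set, its image has nonempty interior, and the hypothesis $\mathcal{U}^{00}\subseteq X$ forces $e_{L}\in\operatorname{int}C$); one should check that $C$ can be taken to be a compact neighbourhood of $e_{L}$, so that Claim \ref{C:TK-C3} applies and gives an increasing sequence $\{n_i\}$ with $L=\bigcup_{i}C^{n_i}$ and $C^{n_i}\subseteq C^{n_{i+1}}$. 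Note $\pi(X^{n})=C^{n}$ since $\pi$ is a homomorphism.

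For part (i), I would argue as follows. Given any $u\in\mathcal{U}$, its image $\pi(u)$ lies in some $C^{n}=\pi(X^{n})$ by Claim \ref{C:TK-C3}, so there is $x\in X^{n}$ with $\pi(u)=\pi(x)$, i.e.\ $u x^{-1}\in\mathcal{U}^{00}\subseteq X$. Hence $u\in X\cdot X^{n}=X^{n+1}$, giving $\mathcal{U}=\bigcup_{n}X^{n}$. For part (ii), $Z$ is definable, hence $\pi(Z)$ is a definable (in particular bounded, using the $\mathbb{R}^{k}\times\mathbb{T}^{r}$ structure and definable boundedness) subset of $L$, so by compactness-type reasoning $\pi(Z)\subseteq C^{m}$ for some single $m$; then the same lifting trick as in (i)—every $z\in Z$ satisfies $\pi(z)\in C^{m}=\pi(X^{m})$, so $z\in X^{m}\cdot\mathcal{U}^{00}\subseteq X^{m+1}$—yields $Z\subseteq X^{k}$ with $k=m+1$.

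For part (iii), I would first bound the convex hull. By Definition \ref{D:convexHull}(ii), each element of $\operatorname{ch}(Z)$ is an $n$-th root of a product $a_1\cdots a_n$ with $a_j\in Z$; applying $\pi$ and using that $L\cong\mathbb{R}^{k}\times\mathbb{T}^{r}$ has convex hulls of bounded sets again bounded (the convex hull of $\pi(Z)$ stays inside some $C^{k}$), one lifts exactly as before to get $\operatorname{ch}(Z)\subseteq X^{k}$. For the torsion-free refinement, I would use the remark following Definition \ref{D:convexHull}: when $\mathcal{U}$ is torsion free and divisible (divisibility comes from Fact \ref{F:EqvU00-I}(ii)), convexity is equivalent to $\prod_{n}W=W^{n}$, so taking $k$-th roots commutes suitably with forming convex hulls, and the bound $\operatorname{ch}(Z)\subseteq X^{k}$ translates into $\operatorname{ch}(Z^{1/k})\subseteq X$ after extracting roots. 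The main obstacle I anticipate is the passage between $\mathcal{U}$ and the Lie quotient $L$: one must justify that $\pi(X)$ is genuinely a \emph{compact} neighbourhood of the identity so that Claim \ref{C:TK-C3} is applicable, and that definable subsets of $\mathcal{U}$ project to bounded subsets of $L$ whose convex hulls remain bounded—this is where Fact \ref{F:EqvU00-II}(iii) (definably bounded convex hulls) and the explicit topological structure of $\mathbb{R}^{k}\times\mathbb{T}^{r}$ must be combined carefully, and where the torsion-free root-extraction in (iii) requires the most delicate bookkeeping.
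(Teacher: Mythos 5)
Your proof reads $X^{n}$ as the $n$-fold product set, but in this paper that set is denoted $\prod_{n}X$; by the Notation section, $X^{n}=\{x^{n}:x\in X\}$ is the set of $n$-th \emph{powers}, and the proposition must be read that way --- both because the torsion-free refinement in (iii) is precisely extraction of $k$-th roots from a power set ($\mathrm{ch}(Z)\subseteq X^{k}$ gives $\mathrm{ch}(Z^{1/k})=\mathrm{ch}(Z)^{1/k}\subseteq(X^{k})^{1/k}=X$), and because the later application in Theorem \ref{T:Z_t-f}(iv) uses identities like $(W_{1}^{l})^{1/l}=W_{1}$ that only hold for power sets. Under this reading your key lifting step collapses: ``$ux^{-1}\in\mathcal{U}^{00}\subseteq X$, hence $u\in X\cdot X^{n}=X^{n+1}$'' is false for power sets. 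The correct lift from $\pi(u)\in\pi(X)^{n}$ is $u=x^{n}w$ with $w\in\mathcal{U}^{00}$; one must then use that $\mathcal{U}^{00}$ is divisible (Fact \ref{F:EqvU00-I}(ii)) to write $w=v^{n}$ with $v\in\mathcal{U}^{00}$, so $u=(xv)^{n}$ where $xv\in X\cdot\mathcal{U}^{00}$ --- and $X\cdot\mathcal{U}^{00}$ need not be contained in $X$. This is exactly why the paper does not apply Claim \ref{C:TK-C3} to $C=\pi(X)$ directly: it first uses saturation to shrink $X$ to a definable $Y$ with $\mathcal{U}^{00}\subseteq Y\subseteq Y\cdot Y\subseteq X$, so that $Y\cdot\mathcal{U}^{00}\subseteq X$ and hence $Y^{n}\cdot\mathcal{U}^{00}=(Y\cdot\mathcal{U}^{00})^{n}\subseteq X^{n}$ as power sets, and only then runs the quotient/compactness argument with $C=\pi(Y)$. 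Your proposal is missing both ingredients (the intermediate $Y$ and the divisibility of $\mathcal{U}^{00}$).

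The gap is not merely notational. Even if one adopts your product reading, under which your arguments for (i), (ii) and the first half of (iii) are essentially sound, the final assertion of (iii) then genuinely fails to follow: from $\mathrm{ch}(Z)\subseteq\prod_{k}X$ one only gets $\mathrm{ch}(Z^{1/k})=\mathrm{ch}(Z)^{1/k}\subseteq\bigl(\prod_{k}X\bigr)^{1/k}$, and this last set lies in $\mathrm{ch}(X)$ but not in $X$ in general: in $(R,+)$ take $X=(-1,1)\cup\{10\}$, so that $5.25=\frac{1}{2}(10+0.5)\in\bigl(\prod_{2}X\bigr)^{1/2}\setminus X$. Indeed, ``there is $k$ with $\mathrm{ch}(Z^{1/k})\subseteq X$'' is literally equivalent to the power-set reading of the first half of (iii), so your closing sentence (``the bound translates after extracting roots'') is the exact point where the two readings diverge and where the argument cannot be repaired without switching to power sets. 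A secondary, fixable point: your justification that $C=\pi(X)$ is a compact neighbourhood of $e_{L}$ is hand-waved; the standard argument (used by the paper) is that $\{l\in L:\pi^{-1}(l)\subseteq X\}$ is open in the logic topology, contains $e_{L}$ since $\pi^{-1}(e_{L})=\mathcal{U}^{00}\subseteq X$, and is contained in $\pi(X)$, which is compact as the image of a definable set; genericity of $X$ alone does not immediately give interior at the identity.
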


\begin{proof}

Let $L$ denote the group $\mathcal{U}/\mathcal{U}^{00}$, let $\pi:\mathcal{U}\rightarrow L$
be the quotient homomorphism, and consider $L$ as the locally compact topological space given by the logic topology (see \cite[Lemma 7.5]{HPePiNIP}). By \cite[Thm. 3.9]{PPant12}, $L$ is isomorphic, as a topological group, to $\mathbb{R}^{r_{1}}\times\mathbb{T}^{r_{2}}$ for some $r_{1},r_{2}\in\mathbb{N}$.

As $\mathcal{U}^{00}\subseteq X$, saturation yields the existence of a definable $Y\subseteq X$ such
that $\mathcal{U}^{00}\subseteq Y\subseteq Y\cdot Y\subseteq X$. Thus, by \cite[Fact 2.3(2)]{PPant12}, $Y$ generates $\mathcal{U}$. Furthermore, $\pi^{\prime}\left(Y\right)=\left\{ l\in L:\pi^{-1}\left(l\right)\subseteq Y\right\}$ is an open neighbourhood of the identity element $e_{L}$ of $L$.
Therefore, $\pi\left(Y\right)$ is a compact connected neighbourhood
of $e_{L}$ in $L$ and generates $L$.

Claim \ref{C:TK-C3} yields the existence of an increasing
sequence $\left\{ n_{i}\right\} _{i\in\mathbb{N}}\subseteq\mathbb{N}$
such that $\pi\left(Y\right)^{n_{i}}\subseteq\pi\left(Y\right)^{n_{i+1}}$
for every $i\in\mathbb{N}$ and $L=\bigcup_{i\in\mathbb{N}}\pi\left(Y\right)^{n_{i}}=\pi\left(\bigcup_{i\in\mathbb{N}}Y^{n_{i}}\right)$.
Hence, \[ \mathcal{U}=\bigcup_{i\in\mathbb{N}}Y^{n_{i}}\cdot\mathcal{U}^{00}=\bigcup_{i\in\mathbb{N}}\left(Y\cdot\mathcal{U}^{00}\right)^{n_{i}}\subseteq\bigcup_{i\in\mathbb{N}}X^{n_{i}}\subseteq\bigcup_{n\in\mathbb{N}}X^{n}.\]
This gives us (i).

Since $\pi\left(X\right)$ is a compact set in $L$ and $L=\bigcup_{i\in\mathbb{N}}\pi\left(Y\right)^{n_{i}}$,
there are $k_{1},\ldots,k_{s}\in\left\{ n_{i}\right\} _{i\in\mathbb{N}}$
such that $\pi\left(X\right)\subseteq\pi\left(Y\right)^{k_{1}}\cup\ldots\cup\pi\left(Y\right)^{k_{s}}$.
As $\pi\left(Y\right)^{n_{i}}\subseteq\pi\left(Y\right)^{n_{i+1}}$,
then $\pi\left(X\right)\subseteq\pi\left(Y\right)^{k^{\star}}$ where
$k^{\star}=\max\left\{ k_{1},\ldots,k_{s}\right\} $. Therefore, \[
X\subseteq X\cdot\mathcal{U}^{00}\subseteq Y^{k^{\star}}\cdot\mathcal{U}^{00}\subseteq\left(Y\cdot Y\right)^{k^{\star}}\subseteq X^{k^{\star}}.\]
By (i) and saturation, if $Z\subseteq\mathcal{U}$ is a definable
set, then there are $l_{1},\ldots,l_{m}\in\mathbb{N}$ such that $Z\subseteq X^{l_{1}}\cup\ldots\cup X^{l_{m}}$,
then $Z\subseteq X^{k^{\star\star}}$ where $k^{\star\star}=\max\left\{ l_{1}k^{\star},\ldots,l_{m}k^{\star}\right\} $,
which yields (ii).

Finally, let us prove (iii). By Lemma 3.7 in \cite{PPant12}, $\mathcal{U}^{00}$
exists if and only if $\mathcal{U}$ covers a definable group, and
by Theorem 5.6 in \cite{BerarEdMamino}, if and only if $\mathcal{U}$
has definably bounded convex hulls; i.e., for every definable $Z^{\prime}\subseteq\mathcal{U}$
there is a definable $W\subseteq\mathcal{U}$ containing the convex
hull $\textrm{ch}\left(Z^{\prime}\right)$ of $Z^{\prime}$. Then, there is a definable set $W$ such that
$\textrm{ch}\left(Z\right)\subseteq W\subseteq\mathcal{U}$, and (ii)
yields the existence of $k\in\mathbb{N}$ such that $W\subseteq X{}^{k}$,
then $\textrm{ch}\left(Z\right)\subseteq X^{k}$. Since $\mathcal{U}^{00}$
exists, Proposition 3.5 in \cite{PPant12} implies that $\mathcal{U}$
is divisible. If in addition $\mathcal{U}$ is torsion free, then
the map $x\mapsto x^{k}:\mathcal{U}\rightarrow\mathcal{U}$ is a group
isomorphism for every $k\in\mathbb{N}$, so if $\textrm{ch}\left(Z\right)\subseteq X^{k}$,
then $\textrm{ch}\left(Z^{\frac{1}{k}}\right)\subseteq X$.
\end{proof}

\section{Local homomorphisms and generic sets: some technical propositions}\label{S:5}

Below we prove some technical results that will be applied in the proofs of Theorems \ref{T:Z_t-f}, and \ref{T:UnivCovAbLD}.

\begin{prop}\label{P:R1}
Let $\mathcal{Z}$ and $\mathcal{V}$ be locally definable groups
such that $\mathcal{Z}^{00}$ exists. Let $W\subseteq\mathcal{Z}$
be a definable set such that $\mathcal{Z}^{00}\subseteq W$, and $\theta:W\rightarrow\mathcal{V}$
be a definable local homomorphism. Then
\begin{enumerate}[(i)]

\item there is a definable symmetric set $W^{\prime}\subseteq W$
such that $\mathcal{Z}^{00}\subseteq W^{\prime}\subseteq\prod_{4}W^{\prime}\subseteq W$
and $\theta\left(W^{\prime}\right)$ is generic in $\left\langle \theta\left(W^{\prime}\right)\right\rangle$.

\item $\theta\left(\mathcal{Z}^{00}\right)$ is
a type-definable subgroup of $\left\langle \theta\left(W^{\prime}\right)\right\rangle $
of index less than $\kappa$, and hence $\left\langle \theta\left(W^{\prime}\right)\right\rangle ^{00}\subseteq\theta\left(\mathcal{Z}^{00}\right)\subseteq\theta\left(W^{\prime}\right)$.
\end{enumerate}
\end{prop}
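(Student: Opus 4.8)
The plan is to extract a well-behaved definable set $W'$ by saturation, verify that $\theta$ is genuinely multiplicative on fourfold products of $W'$, and then transport genericity and bounded index from $\mathcal{Z}$ to $\left\langle \theta\left(W'\right)\right\rangle$ through $\theta$. For the saturation step, since $\mathcal{Z}^{00}$ is a type-definable subgroup with $\prod_{4}\mathcal{Z}^{00}=\mathcal{Z}^{00}\subseteq W$, the partial type asserting $x_{1},\dots,x_{4}\in\mathcal{Z}^{00}$ and $x_{1}x_{2}x_{3}x_{4}\notin W$ is inconsistent, so compactness yields a definable $W'$ with $\mathcal{Z}^{00}\subseteq W'$ and $\prod_{4}W'\subseteq W$; replacing $W'$ by $W'\cap\left(W'\right)^{-1}$ and using $e\in\mathcal{Z}^{00}$, we may take $W'$ symmetric with $e\in W'$, so $W'\subseteq\prod_{4}W'\subseteq W$. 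A routine induction on the local homomorphism property—each partial product of at most four elements of $W'$ lies in $W$—shows $\theta\left(w_{1}\cdots w_{j}\right)=\theta\left(w_{1}\right)\cdots\theta\left(w_{j}\right)$ for $w_{i}\in W'$ and $j\leq 4$; in particular $\theta\left(\prod_{j}W'\right)=\prod_{j}\theta\left(W'\right)$ for $j\leq 4$, and $\theta$ sends the identity to the identity and inverses to inverses, so $V\coloneqq\theta\left(W'\right)$ is symmetric with $\left\langle V\right\rangle=\bigcup_{n}\prod_{n}V$.

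For part (i), since $\mathcal{Z}^{00}$ is a type-definable subgroup of index $<\kappa$ contained in the definable set $W'$, the set $W'$ is generic in $\mathcal{Z}$ (this follows from the logic topology on $\mathcal{Z}/\mathcal{Z}^{00}$: the fiberwise-interior $\pi'\left(W'\right)$ is an open neighbourhood of $e_{L}$, and a definable set has compact image, so finitely many translates suffice). In particular the definable set $\prod_{2}W'$ is covered by finitely many translates $g_{i}W'$, and since each such translate meets $\prod_{2}W'$ we may take $g_{i}\in\prod_{3}W'$. Applying $\theta$ and using multiplicativity on $\prod_{4}W'$ gives $\prod_{2}V=\theta\left(\prod_{2}W'\right)\subseteq\bigcup_{i}\theta\left(g_{i}\right)V$, a finite union of translates inside $\left\langle V\right\rangle$. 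An induction carried out purely group-theoretically in $\left\langle V\right\rangle$ then covers every $\prod_{n}V$ by finitely many translates of $V$; since any definable subset of $\left\langle V\right\rangle$ lies in some $\prod_{n}V$ by saturation, $V=\theta\left(W'\right)$ is generic in $\left\langle V\right\rangle$, which is (i).

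For part (ii), that $\theta\left(\mathcal{Z}^{00}\right)$ is a subgroup follows from $\mathcal{Z}^{00}$ being a subgroup inside $\prod_{4}W'$ together with the multiplicativity of $\theta$ there. Writing $\mathcal{Z}^{00}=\bigcap_{s}C_{s}$ as a decreasing intersection of definable symmetric sets $C_{s}\subseteq W'$, each $\theta\left(C_{s}\right)$ is definable, and a compactness argument (using that the $C_{s}$ decrease and $\theta$ is defined on $W'$) gives $\theta\left(\mathcal{Z}^{00}\right)=\bigcap_{s}\theta\left(C_{s}\right)$, so $\theta\left(\mathcal{Z}^{00}\right)$ is type-definable. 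For the index, $W'$ meets $<\kappa$ cosets of $\mathcal{Z}^{00}$, hence via $\theta\left(z\mathcal{Z}^{00}\right)=\theta\left(z\right)\theta\left(\mathcal{Z}^{00}\right)$ for $z\in W'$ the set $V$ meets $<\kappa$ cosets of $\theta\left(\mathcal{Z}^{00}\right)$; as $\left\langle V\right\rangle$ is covered by $<\kappa$ translates of $V$ by part (i) and left translation permutes cosets, $\left[\left\langle V\right\rangle:\theta\left(\mathcal{Z}^{00}\right)\right]<\kappa$. Minimality of $\left\langle V\right\rangle^{00}$ among type-definable subgroups of index $<\kappa$ then yields $\left\langle V\right\rangle^{00}\subseteq\theta\left(\mathcal{Z}^{00}\right)\subseteq V$.

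The step I expect to be the main obstacle is the transport of genericity through $\theta$ in part (i): because $\theta$ is only a local homomorphism defined on $W$, one cannot simply push forward a covering of $\prod_{n}W'$, and the argument must be arranged so that every product involved stays inside $W$. This is precisely what the fourfold-product room $\prod_{4}W'\subseteq W$ buys for the single base step $\prod_{2}V\subseteq\bigcup_{i}\theta\left(g_{i}\right)V$, after which the passage to higher powers must be carried out by induction inside $\left\langle V\right\rangle$ rather than by reapplying $\theta$. A secondary point requiring care is the compactness argument establishing $\theta\left(\bigcap_{s}C_{s}\right)=\bigcap_{s}\theta\left(C_{s}\right)$, which is what makes $\theta\left(\mathcal{Z}^{00}\right)$ type-definable rather than merely $\bigvee$-definable.
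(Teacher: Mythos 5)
Your proposal is correct and takes essentially the same route as the paper: saturation produces the symmetric $W^{\prime}$ with $\prod_{4}W^{\prime}\subseteq W$, genericity is transported by covering $\prod_{2}W^{\prime}$ with translates whose representatives land in $\prod_{3}W^{\prime}$ (so everything stays inside $W$ and $\theta$ can be applied), and part (ii) follows the identical coset-counting and compactness arguments. The only point you gloss over is the \emph{existence} of $\left\langle \theta\left(W^{\prime}\right)\right\rangle ^{00}$, which your appeal to its minimality presupposes; this is not automatic and is exactly what the paper justifies by citing \cite[Prop. 7.4]{HPePiNIP}, namely that a type-definable subgroup of bounded index guarantees the existence of the smallest one.
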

\begin{proof}
(i) As $\mathcal{Z}^{00}\subseteq W$, saturation implies that  there is a definable
symmetric $W^{\prime}\subseteq W$ such that $\mathcal{Z}^{00}\subseteq W^{\prime}\subseteq\prod_{4}W^{\prime}\subseteq W$.
Since $W^{\prime}$ is generic in $\mathcal{Z}$ and the structure is $\kappa$-saturated (with $\kappa\geq\aleph_{1}$), then $W^{\prime}W^{\prime}\subseteq\bigcup_{i<\aleph_{1}}w_{i}W^{\prime}$
for some $\left\{ w_{i}\right\} _{i<\aleph_{1}}\subseteq\mathcal{Z}$.

Let $I=\left\{ i<\aleph_{1}:W^{\prime}W^{\prime}\cap w_{i}W^{\prime}\neq\emptyset\right\} $, and $i\in I$. If $xy=w_{i}z$ with $x,y,z\in W^{\prime}$, then $w_{i}=xyz^{-1}\in\prod_{3}W^{\prime}$,
thus $w_{i}W^{\prime}\subseteq\prod_{4}W^{\prime}\subseteq W$. Therefore,
\[ \theta\left(W^{\prime}W^{\prime}\right)=\theta\left(W^{\prime}\right)\theta\left(W^{\prime}\right)\subseteq\bigcup_{i\in I}\theta\left(w_{i}\right)\theta\left(W^{\prime}\right)\subseteq\left\langle \theta\left(W^{\prime}\right)\right\rangle ,\]
and $\theta\left(w_{i}\right)\in\left\langle \theta\left(W^{\prime}\right)\right\rangle $
for $i\in I$. Hence, $\theta\left(W^{\prime}\right)\theta\left(W^{\prime}\right)$ is covered by $\theta\left(W^{\prime}\right)$ by $<\aleph_{1}$ group translates. It implies that $\theta\left(W^{\prime}\right)$ is a definable generic subset in $\left\langle \theta\left(W^{\prime}\right)\right\rangle $.

(ii) We will see that $\theta\left(\mathcal{Z}^{00}\right)$ is a
type-definable subgroup of $\left\langle \theta\left(W^{\prime}\right)\right\rangle $
of index less than $\kappa$. By saturation, $\theta\left(\mathcal{Z}^{00}\right)$
is a type-definable set. Now, as $\left[\mathcal{Z}:\mathcal{Z}^{00}\right]<\kappa$, $W^{\prime}\subseteq\bigcup_{j<\kappa}b_{j}\mathcal{Z}^{00}$
with $\left\{ b_{j}\right\} _{j<\kappa}\subseteq\mathcal{Z}$. Let
$J=\left\{ j<\kappa:W^{\prime}\cap b_{j}\mathcal{Z}^{00}\neq\emptyset\right\} $.
Then, if $j\in J$ and $x=b_{j}z$ with $x\in W^{\prime}$, $z\in\mathcal{Z}^{00}$,
then $b_{j}=xz^{-1}\in W^{\prime}\mathcal{Z}^{00}\subseteq\prod_{2}W^{\prime}$,
so $\theta\left(b_{j}\right)\in\prod_{2}\theta\left(W^{\prime}\right)$.
Thus, \[ \theta\left(W^{\prime}\right)\subseteq\bigcup_{j\in J}\theta\left(b_{j}\right)\theta\left(\mathcal{Z}^{00}\right),\:\textrm{and}\: \theta\left(b_{j}\right)\in\left\langle \theta\left(W^{\prime}\right)\right\rangle .\]

In addition, by (i), $\left\langle \theta\left(W^{\prime}\right)\right\rangle \subseteq\bigcup_{i<\kappa}v_{i}\theta\left(W^{\prime}\right)$
for some $\left\{ v_{i}\right\} _{i<\kappa}\subseteq\left\langle \theta\left(W^{\prime}\right)\right\rangle $.
Then,   \[ \left\langle \theta\left(W^{\prime}\right)\right\rangle \subseteq\bigcup_{i<\kappa}v_{i}\bigcup_{j\in J}\theta\left(b_{j}\right)\theta\left(\mathcal{Z}^{00}\right)=\bigcup_{i<\kappa,j\in J}v_{i}\theta\left(b_{j}\right)\theta\left(\mathcal{Z}^{00}\right).\]
Hence, $\left[\left\langle \theta\left(W^{\prime}\right)\right\rangle :\theta\left(\mathcal{Z}^{00}\right)\right]<\kappa$.

Note that since $\theta\left(\mathcal{Z}^{00}\right)$ is a type-definable subgroup of $\left\langle \theta\left(W^{\prime}\right)\right\rangle$ of index $<\kappa$, then $\left\langle \theta\left(W^{\prime}\right)\right\rangle ^{00}$ exists (see \cite[Prop. 7.4]{HPePiNIP}), and thus $\left\langle \theta\left(W^{\prime}\right)\right\rangle ^{00}\subseteq\theta\left(\mathcal{Z}^{00}\right)\subseteq\theta\left(W^{\prime}\right)$.
\end{proof}

\begin{prop}\label{P:CovPties2}
Let $\mathcal{U}$, $\mathcal{V}$ be locally definable groups with
identities $e_{\mathcal{U}}$ and $e_{\mathcal{V}}$, respectively,
and $\theta:\mathcal{U}\rightarrow\mathcal{V}$ a locally definable
covering homomorphism. Let $Y\subseteq\mathcal{V}$ be a definable simply connected set, and $Y^{\prime}\subseteq Y$
a connected definable set such that $e_{\mathcal{V}}\in Y^{\prime}$
and $Y^{\prime}Y^{\prime}\subseteq Y$, then there is a definable
set $W^{\prime}\subseteq \mathcal{U}$ such that $e_{\mathcal{U}}\in W^{\prime}$, $\theta\mid_{W^{\prime}}:W^{\prime}\rightarrow Y^{\prime}$
is a definable homeomorphism and a local homomorphism in both directions.

\[
\xymatrix@R=30pt{W^{\prime}\ar@{<-->}[d]_-{\theta\mid_{W^{\prime}}}\ar@{^{(}-->}[rr]^-{i} &  & \mathcal{U}\ar[d]^-{\theta}\\
Y^{\prime}\ar@{^{(}->}[r]^-{i} & Y^{\prime}Y^{\prime}\subseteq Y\ar@{^{(}->}[r]^-{i} & \mathcal{V}
}
\]
\end{prop}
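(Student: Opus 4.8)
The plan is to use the simple connectedness of $Y$ together with the covering-map machinery from Proposition \ref{P:CovPties1} to produce a definable section of $\theta$ over $Y^\prime$, and then to verify that this section is a local homomorphism in both directions by exploiting the fact that $\theta$ is a group homomorphism. First I would apply Proposition \ref{P:CovPties1} to the compatible (indeed definable) simply connected subspace $Y\subseteq\mathcal{V}$, with base point $n_0=e_{\mathcal{V}}\in Y$ and $m_0=e_{\mathcal{U}}\in\theta^{-1}(e_{\mathcal{V}})$. This yields an open definable subspace $W\subseteq\mathcal{U}$ containing $e_{\mathcal{U}}$ with $\theta\mid_W:W\rightarrow Y$ a definable covering map, and, by part (ii) of that proposition, a definable homeomorphism $\theta\mid_{W_{e_{\mathcal{U}}}}:W_{e_{\mathcal{U}}}\rightarrow Y$, where $W_{e_{\mathcal{U}}}$ is the connected component of $e_{\mathcal{U}}$. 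Let $\sigma=(\theta\mid_{W_{e_{\mathcal{U}}}})^{-1}:Y\rightarrow W_{e_{\mathcal{U}}}$ be the inverse homeomorphism, a definable section of $\theta$ over $Y$ sending $e_{\mathcal{V}}$ to $e_{\mathcal{U}}$.

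Next I would set $W^\prime=\sigma(Y^\prime)$, which is a definable subset of $\mathcal{U}$ containing $e_{\mathcal{U}}$, and note that $\theta\mid_{W^\prime}=\sigma^{-1}\mid_{W^\prime}:W^\prime\rightarrow Y^\prime$ is a definable homeomorphism by restriction. It remains to check that $\sigma$ restricted to $Y^\prime$ is a local homomorphism in both directions. The key idea is that for $a,b\in Y^\prime$ with $ab\in Y$ (which holds since $Y^\prime Y^\prime\subseteq Y$), both $\sigma(a)\sigma(b)$ and $\sigma(ab)$ lie in $\theta^{-1}(ab)$, because $\theta$ is a homomorphism: $\theta(\sigma(a)\sigma(b))=\theta(\sigma(a))\theta(\sigma(b))=ab=\theta(\sigma(ab))$. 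Thus $\sigma(a)\sigma(b)$ and $\sigma(ab)$ differ by an element of the discrete kernel $\ker\theta$. To conclude they are equal, I would use a connectedness/continuity argument: the maps $(a,b)\mapsto\sigma(a)\sigma(b)$ and $(a,b)\mapsto\sigma(ab)$ are continuous on the connected set $Y^\prime\times Y^\prime$ (connected by Corollary \ref{C:CartProdConn}, since $Y^\prime$ is connected), they agree at $(e_{\mathcal{V}},e_{\mathcal{V}})$ because $\sigma(e_{\mathcal{V}})=e_{\mathcal{U}}$, and their ``difference'' $(a,b)\mapsto\sigma(a)\sigma(b)\sigma(ab)^{-1}$ is a continuous map into the discrete kernel $\ker\theta$, hence constant equal to $e_{\mathcal{U}}$. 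This gives $\sigma(ab)=\sigma(a)\sigma(b)$, i.e. $(\theta\mid_{W^\prime})^{-1}$ is a local homomorphism.

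For the other direction, that $\theta\mid_{W^\prime}$ itself is a local homomorphism, I would observe that if $x,y,xy\in W^\prime$, then since $\theta$ is a genuine homomorphism on all of $\mathcal{U}$ we automatically have $\theta(xy)=\theta(x)\theta(y)$, so this direction is immediate from $\theta$ being a homomorphism; the content is really in the inverse direction handled above.

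The main obstacle I anticipate is ensuring that the two continuous maps into the cover can legitimately be compared via the discrete kernel, and in particular justifying that $(a,b)\mapsto\sigma(a)\sigma(b)\sigma(ab)^{-1}$ is a well-defined continuous ld-map landing in the kernel $\ker\theta$, which has dimension zero and is therefore a discrete compatible subspace. One must be careful that the domain $\{(a,b)\in Y^\prime\times Y^\prime\}$ is a connected definable space (guaranteed by Corollary \ref{C:CartProdConn}) and that the relevant products $\sigma(a)\sigma(b)$ stay within a region where $\theta$ restricts to a homeomorphism, so that the lifted section is continuous there; the hypothesis $Y^\prime Y^\prime\subseteq Y$ and the simple connectedness of $Y$ are exactly what make the section globally well-defined over the product, allowing the continuity-to-a-discrete-set argument to force the constant value $e_{\mathcal{U}}$.
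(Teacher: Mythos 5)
Your proposal is correct and follows essentially the same route as the paper: both apply Proposition \ref{P:CovPties1} to the simply connected definable set $Y$ (with base points $e_{\mathcal{V}}$, $e_{\mathcal{U}}$) to obtain a definable section of $\theta$ over $Y$, set $W^{\prime}$ to be the image of $Y^{\prime}$ under that section, and then use connectedness together with discreteness of $\ker\left(\theta\right)$ to obtain the local homomorphism property in both directions. The only difference is cosmetic and lies in the last step: the paper shows that the connected set $W^{\prime}W^{\prime}$ is contained in the single sheet $W_{1}^{0}$ of the decomposition $\theta^{-1}\left(Y\right)=W_{1}^{0}\ker\left(\theta\right)$ and then invokes injectivity of $\theta$ there (via Remark 2.12 of \cite{BADefComp-I}), whereas you show that the discrepancy map $\left(a,b\right)\mapsto\sigma\left(a\right)\sigma\left(b\right)\sigma\left(ab\right)^{-1}$ from the connected set $Y^{\prime}\times Y^{\prime}$ into the discrete kernel is constant equal to $e_{\mathcal{U}}$ --- two phrasings of the same monodromy argument.
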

\begin{proof}
By Proposition \ref{P:CovPties1},
there is a definable $W_{1}\subseteq\mathcal{U}$ open in $\theta^{-1}\left(Y\right)$
such that $e_{\mathcal{U}}\in W_{1}$ and $\theta\mid_{W_{1}^{0}}:W_{1}^{0}\rightarrow Y$
is a definable homeomorphism, where $W_{1}^{0}$ is the identity component
of $W_{1}$. Let $W^{\prime}=\theta\mid_{W_{1}^{0}}^{-1}\left(Y^{\prime}\right)$,
then $\theta\left(W^{\prime}W^{\prime}\right)=\theta\left(W^{\prime}\right)\theta\left(W^{\prime}\right)\subseteq Y^{\prime}Y^{\prime}\subseteq Y$,
then $W^{\prime}W^{\prime}\subseteq\theta^{-1}\left(Y\right)\subseteq W_{1}^{0}\ker\left(\theta\right)$.

In addition, $W_{1}^{0}k_{1}\cap W_{1}^{0}k_{2}=\emptyset$ if $k_{1}\neq k_{2}$
and $k_{1},k_{2}\in\ker\left(\theta\right)$; otherwise, if there are $y_{1},y_{2}\in W_{1}^{0}$ such that $y_{1}k_{1}=y_{2}k_{2}$, then
$\theta\left(y_{1}k_{1}\right)=\theta\left(y_{1}\right)=\theta\left(y_{2}k_{2}\right)=\theta\left(y_{2}\right)$,
but $\theta$ is injective in $W_{1}^{0}$, then $y_{1}=y_{2}$, so
$k_{1}=k_{2}$, which is a contradiction since $k_{1}\neq k_{2}$. Then $\theta\mid_{W_{2}}:W_{2}\rightarrow Y$
is a definable covering map of ld-spaces and $e_{\mathcal{U}}\in W_{2}$.

Therefore, from the connectedness of $W^{\prime}W^{\prime}$ and $W^{\prime}W^{\prime}\subseteq W_{1}^{0}\ker\left(\theta\right)$, we get $W^{\prime}W^{\prime}\subseteq W_{1}^{0}$. Thus, \cite[Remark 2.12]{BADefComp-I} implies that the homeomorphism $\theta\mid_{W^{\prime}}:W^{\prime}\rightarrow Y^{\prime}$ is
a local homomorphism in both directions.
\end{proof}

\section{Extension of a definable local homomorphism from a torsion free abelian locally definable group}\label{S:6}

\begin{thm}\label{T:Z_t-f}
Let $\mathcal{Z}$ be a connected abelian torsion free locally definable
group such that $\mathcal{Z}^{00}$ exists, and let $\mathcal{V}$
be an abelian locally definable group. Let $W\subseteq\mathcal{Z}$
be a definable set such that $\mathcal{Z}^{00}\subseteq W$. Assume
that $\theta:W\subseteq\mathcal{Z}\rightarrow\mathcal{V}$ is a definable local
homomorphism.

Then there exists a unique locally definable homomorphism $\overline{\theta}:\mathcal{Z}\rightarrow\mathcal{V}$ extending $\overline{\theta}$.

If in addition $\mathcal{V}$ is connected, $\mathcal{V}^{00}$
exists, $\mathcal{V}^{00}\subseteq\theta\left(W\right)$, $\theta$
is injective and $\theta^{-1}:\theta\left(W\right)\rightarrow W$
is a local homomorphism, then $\overline{\theta}:\mathcal{Z}\rightarrow\left\langle \theta\left(W\right)\right\rangle =\mathcal{V}$
is the o-minimal universal covering homomorphism of $\mathcal{V}$.

\[
\xymatrix{ &  & \mathcal{Z}\ar@{-->}@/_/[lld]_-{\overline{\theta}}\\
\mathcal{V} & \theta\left(W\right)\ar@{_{(}->}[l]_-{i} & W\ar[l]_-{\theta}\ar@{}[u]|*[@]{\subseteq}
}
\]
\end{thm}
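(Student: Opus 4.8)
The plan is to produce $\overline{\theta}$ by an $n$-th–root formula and to make the local–homomorphism identity telescope by trapping all intermediate powers inside $W$ via convexity. Since $\mathcal{Z}^{00}$ exists and $\mathcal{Z}$ is torsion free, $\mathcal{Z}$ is divisible (Fact~\ref{F:EqvU00-I}), so each $x\mapsto x^{n}$ is a locally definable automorphism of $\mathcal{Z}$ and every $z$ has a unique root $z^{1/n}$. Using Proposition~\ref{P:ImpProp}(iii) I would first fix $k$ with $\mathrm{ch}\!\left(W^{1/k}\right)\subseteq W$, set $W'=W^{1/k}$ and $C=\mathrm{ch}(W')$; then $\mathcal{Z}^{00}\subseteq W'\subseteq C\subseteq W$ with $C$ convex, and $\mathcal{Z}=\bigcup_{n}(W')^{n}$ by Proposition~\ref{P:ImpProp}(i). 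For $z\in\mathcal{Z}$ I would choose $n$ with $z^{1/n}\in W'$ (such $n$ exists) and declare $\overline{\theta}(z)=\theta\!\left(z^{1/n}\right)^{n}$.

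The heart of the construction, and the step I expect to be the main obstacle, is well–definedness. Given $z^{1/n},z^{1/m}\in W'$, I would set $w=z^{1/(nm)}$, so that $w^{n}=z^{1/m}$ and $w^{m}=z^{1/n}$ both lie in $W'\subseteq C$. Since $C$ is convex and contains $e$, Definition~\ref{D:convexHull}(i) applied to the two points $w^{n}$ and $e$ forces the whole chain $w,w^{2},\dots,w^{\max(m,n)}$ to lie in $C\subseteq W$; the local–homomorphism relation of Definition~\ref{D:homomoloc} then telescopes along this chain to give $\theta(w^{j})=\theta(w)^{j}$, whence $\theta(z^{1/n})^{n}=\theta(w^{m})^{n}=\theta(w)^{mn}=\theta(w^{n})^{m}=\theta(z^{1/m})^{m}$. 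This is exactly the point where Proposition~\ref{P:ImpProp}(iii) is indispensable: the convex set $C$ squeezed between $\mathcal{Z}^{00}$ and $W$ is what keeps every intermediate power inside the domain of $\theta$, and without it the telescoping fails.

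With well–definedness in hand the rest is bookkeeping. Taking $n=1$ on $W'$ shows $\overline{\theta}$ agrees with $\theta$, and the same convexity control of chains gives agreement on $W$. For multiplicativity, given $z_{1},z_{2}$ I would take a common large $n$ so that $a=z_{1}^{1/n}$, $b=z_{2}^{1/n}$ and $ab=(z_{1}z_{2})^{1/n}$ all lie in $C\subseteq W$; then $\theta(ab)=\theta(a)\theta(b)$, and raising to the $n$-th power with commutativity yields $\overline{\theta}(z_{1}z_{2})=\overline{\theta}(z_{1})\overline{\theta}(z_{2})$. Local definability is immediate, since on each definable piece $(W')^{n}$ the map $z\mapsto\theta(z^{1/n})^{n}$ is a composition of definable maps and these pieces cover $\mathcal{Z}$. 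Uniqueness holds because $W'$ is generic, hence generates $\mathcal{Z}$ (Proposition~\ref{P:R1}(i) together with \cite[Fact~2.3]{PPant12}), so any homomorphic extension is pinned down by its values on $W'$.

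For the final assertion I would add the hypotheses on $\mathcal{V}$. As $\mathcal{V}^{00}\subseteq\theta(W)$, the definable set $\theta(W)$ is generic and hence generates the connected group $\mathcal{V}$, giving $\langle\theta(W)\rangle=\mathcal{V}$ and the surjectivity of $\overline{\theta}$ (the genericity of $\theta(W')$ in its hull from Proposition~\ref{P:R1} identifies the image with $\mathcal{V}$). Since $\theta$ is injective on $W$, $\overline{\theta}$ is injective on the generic set $W'$, so $\ker\overline{\theta}$ meets an open neighbourhood of $e_{\mathcal{Z}}$ only in $e_{\mathcal{Z}}$ and therefore has dimension zero; by Fact~\ref{F:Thm3.6}, $\overline{\theta}$ is a locally definable covering homomorphism. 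Finally $\mathcal{Z}$ is connected, torsion free, and—because $\mathcal{Z}^{00}$ exists—covers an abelian connected definable group (Fact~\ref{F:EqvU00-I}(i)), so $\mathcal{Z}$ is simply connected by Claim~\ref{C:tf-is-sc}. By Fact~\ref{F:UnvCovIFFisS.C.}, a covering homomorphism from a simply connected group is the o-minimal universal covering homomorphism, and therefore $\overline{\theta}:\mathcal{Z}\to\mathcal{V}$ is the o-minimal universal covering homomorphism of $\mathcal{V}$.
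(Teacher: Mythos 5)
Your construction is the paper's construction in lightly different clothing: the paper also extends $\theta$ by a root formula (it writes $z=y_{1}\cdots y_{n}$ with the $y_{i}$ in a symmetric $W_{1}\subseteq W$ supplied by Proposition \ref{P:R1}, and sets $\overline{\theta}(z)=\bigl(\theta(y_{1}^{\frac{1}{k}})\cdots\theta(y_{n}^{\frac{1}{k}})\bigr)^{k}$), with the same exponent $k$ coming from Proposition \ref{P:ImpProp}(iii); your telescoping proof of well-definedness, your multiplicativity, local definability, and uniqueness-via-generation arguments are all sound (the paper does uniqueness through connectedness instead, which is immaterial). The first genuine gap is your claim that ``the same convexity control of chains gives agreement on $W$''. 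The chain $z^{\frac{j}{n}}$, $1\leq j\leq n$, is trapped in $C=ch(W^{\frac{1}{k}})$ only when $z$ itself lies in the convex set $C$, and $W\not\subseteq C$; for $z\in W\setminus C$ nothing keeps the intermediate powers inside $W$, so the telescoping collapses. Worse, agreement on all of $W$ cannot be proved from the hypotheses: take $\mathcal{Z}=\mathcal{V}=\bigcup_{n}(-n,n)\leq(R,+)$, so that $\mathcal{Z}^{00}$ is the subgroup of infinitesimals, let $W=(-1,1)\cup(5,6)$, and let $\theta$ be the identity on $(-1,1)$ and $x\mapsto x+100$ on $(5,6)$. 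This is a definable local homomorphism (even injective, with $\theta^{-1}$ a local homomorphism), but any homomorphism of $\mathcal{Z}$ agreeing with $\theta$ on $(-1,1)$ must be the identity, so no locally definable homomorphism extends $\theta$ on all of $W$. You are in good company here --- the paper proves $\theta=\overline{\theta}$ only on $ch(W_{1}^{\frac{1}{k}})$ and then asserts $\overline{\theta}\mid_{W}=\theta$ as a consequence of uniqueness, which is circular --- but the honest conclusion (and all that the application in Theorem \ref{T:UnivCovAbLD} needs) is agreement on a definable set containing $\mathcal{Z}^{00}$, not on all of $W$.

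The second gap is surjectivity in the covering part, and the symptom is that you never use the hypothesis that $\theta^{-1}$ is a local homomorphism. What your construction gives is $\overline{\theta}(\mathcal{Z})=\langle\theta(W^{\prime})\rangle$ with $W^{\prime}=W^{\frac{1}{k}}$, because $\overline{\theta}$ is only known to agree with $\theta$ on $W^{\prime}$ (first gap); knowing $\langle\theta(W)\rangle=\mathcal{V}$ does not identify these two groups, since you cannot conclude $\theta(W)\subseteq\overline{\theta}(\mathcal{Z})$. The paper closes exactly this hole with a pull-back that uses both the injectivity of $\theta$ and the local homomorphism property of $\theta^{-1}$: these make $\theta^{-1}(\mathcal{V}^{00})$ a divisible torsion-free subgroup of $\mathcal{Z}$ contained in $W$, hence equal to its own set of $k$-th roots, hence contained in $W^{\frac{1}{k}}=W^{\prime}$; therefore $\mathcal{V}^{00}\subseteq\theta(W^{\prime})\subseteq\overline{\theta}(\mathcal{Z})$, and genericity of any set containing $\mathcal{V}^{00}$ plus connectedness of $\mathcal{V}$ then give $\overline{\theta}(\mathcal{Z})=\mathcal{V}$. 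Without this step, Fact \ref{F:Thm3.6} (which requires a \emph{surjective} homomorphism) cannot be invoked and the covering claim does not go through. A minor further elision: your assertion that $\ker\overline{\theta}$ meets an open neighbourhood of $e_{\mathcal{Z}}$ only in $e_{\mathcal{Z}}$ tacitly uses that a definable set containing $\mathcal{Z}^{00}$ is a neighbourhood of the identity; this is true (full dimension plus a translation argument) but needs saying, whereas the paper avoids it by comparing dimensions of the generic sets $W_{1}$ and $\theta(W_{1})$.
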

\begin{proof}
By Proposition \ref{P:R1}, there is a definable symmetric $W_{1}\subseteq W$
such that $\mathcal{Z}^{00}\subseteq W_{1}\subseteq\prod_{4}W_{1}\subseteq W$, and $\theta\left(W_{1}\right)$ is generic in $\left\langle \theta\left(W_{1}\right)\right\rangle $.
Now, note that since $\mathcal{Z}^{00}$ exists, by \cite[Proposition 3.5]{PPant12}, $\mathcal{Z}$ is divisible, then the map $z\mapsto z^{k}:\mathcal{Z}\rightarrow\mathcal{Z}$
is a group isomorphism. By Proposition \ref{P:ImpProp}(iii), there
is $k\in\mathbb{N}$ such that the convex hull $\textrm{ch}\left(W_{1}^{\frac{1}{k}}\right)$
of $W_{1}^{\frac{1}{k}}$ is contained in $W_{1}$.

Let $y\in\mathcal{Z}=\left\langle W_{1}\right\rangle =\bigcup_{n\in\mathbb{N}}\prod_{n}W_{1}$,
so there are $n\in\mathbb{N}$ and $y_{1},\ldots,y_{n}\in W_{1}$
such that $y=y_{1}\cdots y_{n}$. Let \[
\overline{\theta}\left(y\right)=\left(\theta\left(y_{1}^{\frac{1}{k}}\right)\cdots\theta\left(y_{n}^{\frac{1}{k}}\right)\right)^{k}.
\]
\begin{claim}\label{C:TK-C4}
The map $\overline{\theta}:\mathcal{Z}\rightarrow\mathcal{V}$ defined
as above satisfies the following.
\begin{enumerate}[(i)]

\item $\overline{\theta}$ is a well defined map.

\item $\overline{\theta}$ is a locally definable homomorphism.

\item $\overline{\theta}$ is the unique extension of $\theta:W\rightarrow\mathcal{V}$ that is a locally definable homomorphism from $\mathcal{Z}$ to $\mathcal{V}$.

\item If, moreover, $\mathcal{V}$ is connected, $\mathcal{V}^{00}$ exists, $\mathcal{V}^{00}\subseteq\theta\left(W\right)$, $\theta$ is injective and $\theta^{-1}:\theta\left(W\right)\rightarrow W$ is a local homomorphism, then $\overline{\theta}\left(\mathcal{Z}\right)=\mathcal{V}$ and $\overline{\theta}$ is the o-minimal universal covering homomorphism of $\mathcal{V}$.
\end{enumerate}
\end{claim}
\begin{proof}
(i) As $ch\left(W_{1}^{\frac{1}{k}}\right)\subseteq W_{1}$, then
for every $i,j\in\mathbb{N}$ with $j\leq i$ and $i>0$ we have that: \[
\prod_{j}\left(W_{1}^{\frac{1}{k}}\right)^{\frac{1}{i}}\subseteq W_{1}.\]
And since $\theta$ is a locally homomorphism, then for every $y_{1},\ldots,y_{j}\in W_{1}^{\frac{1}{k}}$

\begin{equation}
\theta\left(\underbrace{y_{1}^{\frac{1}{i}}\cdots y_{j}^{\frac{1}{i}}}_{j\textrm{-times}}\right)=\theta\left(y_{1}^{\frac{1}{i}}\right)\cdots\theta\left(y_{j}^{\frac{1}{i}}\right).\label{Ec:Ast2}
\end{equation}

Now, we will see that $\overline{\theta}$ is well defined.

Let $y\in\mathcal{Z}=\left\langle W_{1}\right\rangle =\bigcup_{n\in\mathbb{N}}\prod_{n}W_{1}$,
and suppose that
\begin{equation}
y=y_{1}\cdots y_{n}=x_{1}\cdots x_{m}\label{Ec:Ast3}
\end{equation}
for some $y_{1},\ldots,y_{n},x_{1},\ldots,x_{m}\in W_{1}$, and $m,n\in\mathbb{N}$.
Additionally, assume, without loss of generality, that $m\leq n$.
\begin{align*}
\overline{\theta}\left(y_{1}\cdots y_{n}\right) & =  \left(\theta\left(y_{1}^{\frac{1}{k}}\right)\cdots\theta\left(y_{n}^{\frac{1}{k}}\right)\right)^{k}\\
 & =  \left(\theta\left(\underset{n\textrm{-times}}{\underbrace{y_{1}^{\frac{1}{kn}}\cdots y_{1}^{\frac{1}{kn}}}}\right)\cdots\theta\left(\underset{n\textrm{-times}}{\underbrace{y_{n}^{\frac{1}{kn}}\cdots y_{n}^{\frac{1}{kn}}}}\right)\right)^{k}\textrm{, by Eq. }(\ref{Ec:Ast2})\\
 & =  \left(\left(\theta\left(y_{1}^{\frac{1}{kn}}\right)\right)^{n}\cdots\left(\theta\left(y_{n}^{\frac{1}{kn}}\right)\right)^{n}\right)^{k}\textrm{, by Eq. }(\ref{Ec:Ast2})\\
 & =  \left(\left(\theta\left(y_{1}^{\frac{1}{kn}}\cdots y_{n}^{\frac{1}{kn}}\right)\right)^{n}\right)^{k}\textrm{, by Eq. }(\ref{Ec:Ast3})\\
 & =  \left(\left(\theta\left(x_{1}^{\frac{1}{kn}}\cdots x_{m}^{\frac{1}{kn}}\right)\right)^{n}\right)^{k}\textrm{, by Eq. }(\ref{Ec:Ast2})\\
 & =  \left(\left(\theta\left(x_{1}^{\frac{1}{kn}}\right)\right)^{n}\cdots\left(\theta\left(x_{m}^{\frac{1}{kn}}\right)\right)^{n}\right)^{k}\textrm{, by Eq. }(\ref{Ec:Ast2})\\
 & =  \left(\theta\left(x_{1}^{\frac{1}{k}}\right)\cdots\theta\left(x_{m}^{\frac{1}{k}}\right)\right)^{k}\\
 & =  \overline{\theta}\left(x_{1}\cdots x_{m}\right).
\end{align*}
Therefore, $\overline{\theta}$ is well defined.

(ii) Since $\overline{\theta}\mid_{\prod_{n}W_{1}}$ is a definable
map for every $n\in\mathbb{N}$, then the restriction of $\overline{\theta}$
to a definable subset of $\mathcal{Z}=\left\langle W_{1}\right\rangle $
is a definable map. And by definition of $\overline{\theta}$, $\overline{\theta}$
is clearly a group homomorphism.

(iii) First, we will see that $\theta\mid_{ch\left(W_{1}^{\frac{1}{k}}\right)}=\overline{\theta}\mid_{ch\left(W_{1}^{\frac{1}{k}}\right)}$.

By definition of the convex hull (Def. \ref{D:convexHull}) and the divisibility
of $\mathcal{Z}$, an element in $ch\left(W_{1}^{\frac{1}{k}}\right)\subseteq W_{1}$
is of the form $y_{1}^{\frac{1}{n}}\cdots y_{n}^{\frac{1}{n}}$ for
some $y_{1},\ldots,y_{n}\in W_{1}^{\frac{1}{k}}$ and $n\in\mathbb{N}$. Thus,

\begin{align*}
\overline{\theta}\left(y_{1}^{\frac{1}{n}}\cdots y_{n}^{\frac{1}{n}}\right) & =  \left(\theta\left(y_{1}^{\frac{1}{nk}}\cdots y_{n}^{\frac{1}{nk}}\right)\right)^{k}\textrm{, by Eq. }(\ref{Ec:Ast2})\\
 & =  \left(\theta\left(y_{1}^{\frac{1}{nk}}\right)\cdots\theta\left(y_{n}^{\frac{1}{nk}}\right)\right)^{k}\\
 & =  \left(\theta\left(y_{1}^{\frac{1}{nk}}\right)\right)^{k}\cdots\left(\theta\left(y_{n}^{\frac{1}{nk}}\right)\right)^{k}\textrm{, by Eq. }(\ref{Ec:Ast2})\\
 & = \theta\left(y_{1}^{\frac{1}{n}}\right)\cdots\theta\left(y_{n}^{\frac{1}{n}}\right)\textrm{, by Eq. }(\ref{Ec:Ast2})\\
 & =  \theta\left(y_{1}^{\frac{1}{n}}\cdots y_{n}^{\frac{1}{n}}\right).
\end{align*}

Then $\theta$ and $\overline{\theta}$ agree on $ch\left(W_{1}^{\frac{1}{k}}\right)$.

Now, we will verify uniqueness. Let $\beta:\mathcal{Z}\rightarrow\mathcal{V}$ be a locally definable homomorphism that is an extension of $\theta:W\rightarrow\mathcal{V}$, then in particular $\beta$ and $\overline{\theta}$ agree on $W_{1}^{\frac{1}{k}}$. Let $\mathcal{Z}^{\prime}=\left\{ z\in\mathcal{Z}:\beta\left(z\right)=\overline{\theta}\left(z\right)\right\} $. Then $G^{00}\subseteq W_{1}^{\frac{1}{k}}\subseteq\mathcal{Z}^{\prime}$ and $\mathcal{Z}^{\prime}$ is an open locally definable subgroup of $\mathcal{Z}$. By \cite[Lemma 2.12]{ED05}, $\mathcal{Z}^{\prime}$ is a compatible subset of $\mathcal{Z}$. But $\mathcal{Z}$ is connected, then $\mathcal{Z}=\mathcal{Z}^{\prime}$; i.e., $\beta=\overline{\theta}$.

Then, $\beta=\overline{\theta}$. And hence, $\overline{\theta}\mid_{W}=\theta$; i.e., $\overline{\theta}$ is also an extension of $\theta:W\rightarrow\mathcal{V}$.

(iv) First, note that $\overline{\theta}\left(\mathcal{Z}\right)=\bigcup_{n\in\mathbb{N}}\prod_{n}\left(\theta\left(W_{1}^{\frac{1}{k}}\right)\right)^{k}=\left\langle \left(\theta\left(W_{1}^{\frac{1}{k}}\right)\right)^{k}\right\rangle $.
Now, by Proposition \ref{P:ImpProp}(ii), there is $l\in\mathbb{N}$
such that $W\subseteq W_{1}^{l}$, then $\theta^{-1}\left(\mathcal{V}^{00}\right)\subseteq W\subseteq W_{1}^{l}$.
By the hypothesis on $\theta^{-1}$,
$\theta^{-1}\left(\mathcal{V}^{00}\right)$ is a type-definable subgroup
of $\mathcal{Z}$; moreover, by \cite[Proposition 3.5]{PPant12}, $\mathcal{V}^{00}$ is divisible, so $\theta^{-1}\left(\mathcal{V}^{00}\right)$
is an abelian torsion free divisible subgroup of $\mathcal{Z}$. Henceforth,
$\theta^{-1}\left(\mathcal{V}^{00}\right)\subseteq W_{1}^{l}$ implies
that $\theta^{-1}\left(\mathcal{V}^{00}\right)=\theta^{-1}\left(\mathcal{V}^{00}\right)^{\frac{1}{kl}}\subseteq W_{1}^{\frac{1}{k}}$,
thus $\mathcal{V}^{00}\subseteq\left(\theta\left(W_{1}^{\frac{1}{k}}\right)\right)^{k}$,
it follows that $\mathcal{V}=\left\langle \left(\theta\left(W_{1}^{\frac{1}{k}}\right)\right)^{k}\right\rangle =\left\langle \theta\left(W\right)\right\rangle $. Then, $\overline{\theta}:\mathcal{Z}\rightarrow\mathcal{V}$ is surjective.

On the other hand, notice that $\dim\left(\ker\left(\overline{\theta}\right)\right)=0$
if and only if $\dim\left(\mathcal{Z}\right)=\dim\left(\overline{\theta}\left(\mathcal{Z}\right)\right)=\dim\left(\left\langle \theta\left(W_{1}\right)\right\rangle \right)$.
Since $W_{1}$ and $\theta\left(W_{1}\right)$ are generic in $\mathcal{Z}$
and $\left\langle \theta\left(W_{1}\right)\right\rangle $, respectively,
then $\dim\left(\mathcal{Z}\right)=\dim\left(W_{1}\right)$ and $\dim\left(\left\langle \theta\left(W_{1}\right)\right\rangle \right)=\dim\left(\theta\left(W_{1}\right)\right)$;
finally, $\dim\left(\mathcal{Z}\right)=\dim\left(\left\langle \theta\left(W_{1}\right)\right\rangle \right)$
follows from the injectivity of $\theta$, so $\overline{\theta}:\mathcal{Z}\rightarrow\mathcal{V}$
is a locally definable covering homomorphism by \cite[Theorem 3.6]{ED05}.

Since $\mathcal{Z}$ is abelian, connected, and $\mathcal{Z}^{00}$ exists, then $\mathcal{Z}$ covers an abelian definable group (\cite[Thm. 3.9]{PPant12}). Thus Claim \ref{C:tf-is-sc} yields $\mathcal{Z}$ is simply connected. Therefore, by \cite[Remark 3.8]{EdPanPre2013}, $\overline{\theta}:\mathcal{Z}\rightarrow\mathcal{V}$ is the o-minimal universal covering homomorphism of $\mathcal{V}$.
\end{proof}
This concludes the proof of Theorem \ref{T:Z_t-f}.
\end{proof}

\section{Universal covers of locally homomorphic abelian locally definable groups}\label{S:7}

\begin{thm}\label{T:UnivCovAbLD}
Let $\mathcal{U}$, $\mathcal{V}$ be abelian connected locally definable
groups such that $\mathcal{U}^{00}$ exists and $\mathcal{U}^{00}$ is an intersection of $\omega$-many simply
connected definable subsets of $\mathcal{U}$. Let $X\subseteq\mathcal{U}$ be a definable set with $\mathcal{U}^{00}\subseteq X$, and $\phi:X\subseteq\mathcal{U}\rightarrow \phi\left(X\right)\subseteq\mathcal{V}$
a definable homeomorphism and a local homomorphism. Assume that $\pi:\widetilde{\mathcal{V}}\rightarrow\mathcal{V}$ is the o-minimal universal covering homomorphism of $\mathcal{V}$.

Then,
\begin{enumerate}[(i)]

\item there are a connected locally definable subgroup $\mathcal{W}$ of $\widetilde{\mathcal{V}}$ and a locally definable homomorphism $\overline{\theta}:\mathcal{W}\rightarrow\mathcal{U}$ that is the o-minimal universal covering homomorphism of $\mathcal{U}$, and
\item there is a connected symmetric definable $X^{\prime}\subseteq X$ with $\mathcal{U}^{00}\subseteq X^{\prime}$ such that $\pi\mid_{\mathcal{W}}:\mathcal{W}\rightarrow\left\langle \phi\left(X^{\prime}\right)\right\rangle \leq\mathcal{V}$ is the o-minimal universal covering homomorphism of $\left\langle \phi\left(X^{\prime}\right)\right\rangle $.
\end{enumerate}

If in addition $X$ is simply connected, then $\mathcal{W}$ is a subgroup of the o-minimal universal covering group $\widetilde{\left\langle \phi\left(X\right)\right\rangle }$
of $\left\langle \phi\left(X\right)\right\rangle $.

\[
\xymatrix{ &  & \mathcal{W}\ar@{-->}@/_/[lld]_-{\overline{\theta}}\ar@{^{(}-->}[r]^-{i} & \widetilde{\mathcal{V}}\ar[d]^-{\pi}\\
\mathcal{U} & X\ar[r]^-{\phi}\ar@{_{(}->}[l]_-{i} & \phi\left(X\right)\ar@{^{(}->}[r]^-{i} & \mathcal{V}
}
\]

\end{thm}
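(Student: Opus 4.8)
The plan is to manufacture a torsion free locally definable group sitting inside $\widetilde{\mathcal{V}}$ that carries a definable local homomorphism onto a generic piece of $\mathcal{U}$, and then to feed this data into Theorem \ref{T:Z_t-f}. First I would replace $X$ by a better set. Writing $\mathcal{U}^{00}=\bigcap_{n<\omega}S_n$ with each $S_n$ simply connected and definable, $\kappa$-saturation (with $\kappa>\aleph_0$) produces a simply connected definable $Y_0$ with $\mathcal{U}^{00}\subseteq Y_0\subseteq X$, and then a symmetric connected definable $X'$ with $\mathcal{U}^{00}\subseteq X'\subseteq X'X'\subseteq Y_0$. Because $X'X'\subseteq X$, the hypotheses on $\phi$ give that $\phi\mid_{X'}$ is a homeomorphism and a local homomorphism \emph{in both directions}; as $\phi$ is a homeomorphism, $\phi(X')$ and $\phi(Y_0)$ are simply connected and $\phi(X')\phi(X')=\phi(X'X')\subseteq\phi(Y_0)$. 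Applying Proposition \ref{P:R1} to $\phi\colon X\to\mathcal{V}$ I may also assume $\phi(X')$ is generic in $V':=\langle\phi(X')\rangle$, that $V'^{00}$ exists, and that $V'^{00}\subseteq\phi(\mathcal{U}^{00})$; by Fact \ref{F:EqvU00-I} this forces $V'$ to be divisible.

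Next, applying Proposition \ref{P:CovPties2} to the covering $\pi\colon\widetilde{\mathcal{V}}\to\mathcal{V}$ with $Y=\phi(Y_0)$ and $Y'=\phi(X')$ yields a definable $W'\subseteq\widetilde{\mathcal{V}}$ containing $e_{\widetilde{\mathcal{V}}}$ such that $\pi\mid_{W'}\colon W'\to\phi(X')$ is a definable homeomorphism and a local homomorphism in both directions. I set $\mathcal{W}:=\langle W'\rangle$, a connected locally definable subgroup of $\widetilde{\mathcal{V}}$ by Proposition \ref{P:DefGenIsConn}, and $\psi:=\phi^{-1}\circ(\pi\mid_{W'})\colon W'\to X'\subseteq\mathcal{U}$. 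Composing two maps that are local homomorphisms in both directions shows $\psi$ is a definable local homomorphism, injective, with $\psi^{-1}$ a local homomorphism and $\psi(W')=X'$. Since $\ker\pi$ has dimension $0$, the restriction $\pi\mid_{\mathcal{W}}\colon\mathcal{W}\to V'$ is a surjective locally definable homomorphism with zero-dimensional kernel, hence a locally definable covering homomorphism by Fact \ref{F:Thm3.6}. Thus $\mathcal{W}$ covers a definable group, so $\mathcal{W}^{00}$ exists (Fact \ref{F:EqvU00-I}) and $\mathcal{W}$ is divisible (Remark \ref{R:PropCover}); moreover $\mathcal{W}^{00}$ maps isomorphically onto $V'^{00}\subseteq\phi(X')$ and lies in the identity sheet, so $\mathcal{W}^{00}\subseteq W'$.

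The heart of the argument, and the step I expect to be the main obstacle, is showing that $\mathcal{W}$ is torsion free. Since $\mathcal{W}$ is abelian, connected, divisible and $\mathcal{W}^{00}$ exists, $\mathcal{W}/\mathcal{W}^{00}\cong\mathbb{R}^k\times\mathbb{T}^r$ with $\mathcal{W}^{00}$ torsion free (Fact \ref{F:EqvU00-I}); a short extension argument then shows $\mathcal{W}$ is torsion free exactly when $r=0$. Comparing $k$-torsion through $\pi\mid_{\mathcal{W}}$ and using injectivity of the induced map on $\pi_1$ (as in the proof of Claim \ref{C:tf-is-sc}) identifies $r$ with $\mathrm{rank}\,\pi_1(\mathcal{W})$, and one computes $\pi_1(\mathcal{W})\cong\ker\bigl(\pi_1(V')\to\pi_1(\mathcal{V})\bigr)$. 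So everything reduces to showing that the inclusion $V'\hookrightarrow\mathcal{V}$ induces an injection on o-minimal fundamental groups. Here the hypothesis on $\mathcal{U}^{00}$ is indispensable: lifting each simply connected $\phi(S_n)$ through $\pi$ by Proposition \ref{P:CovPties1}(ii) exhibits $\mathcal{W}^{00}$ as an intersection of $\omega$-many simply connected definable subsets of $\mathcal{W}$, and I would use this simply connected filtration of the infinitesimal part to rule out any torus factor, i.e. to force $r=0$. I expect this injectivity to be the delicate point, since for a lower-dimensional subgroup it is false without such an input.

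Finally, with $\mathcal{W}$ a connected abelian torsion free locally definable group, $\mathcal{W}^{00}$ existing, $W'\subseteq\mathcal{W}$ definable with $\mathcal{W}^{00}\subseteq W'$, and $\psi\colon W'\to\mathcal{U}$ a definable local homomorphism, Theorem \ref{T:Z_t-f} produces a unique locally definable homomorphism $\overline{\theta}\colon\mathcal{W}\to\mathcal{U}$ extending $\psi$. As $\mathcal{U}$ is connected, $\mathcal{U}^{00}$ exists and $\mathcal{U}^{00}\subseteq X'=\psi(W')$, $\psi$ is injective and $\psi^{-1}$ is a local homomorphism, the ``in addition'' clause of Theorem \ref{T:Z_t-f} gives that $\overline{\theta}\colon\mathcal{W}\to\langle\psi(W')\rangle=\langle X'\rangle=\mathcal{U}$ (the last equality by Proposition \ref{P:ImpProp}) is the o-minimal universal covering homomorphism of $\mathcal{U}$, which is (i). For (ii), since $\overline{\theta}$ is an o-minimal universal covering homomorphism, $\mathcal{W}$ is simply connected by Fact \ref{F:UnvCovIFFisS.C.}, so the covering $\pi\mid_{\mathcal{W}}\colon\mathcal{W}\to\langle\phi(X')\rangle$ with simply connected source is, again by Fact \ref{F:UnvCovIFFisS.C.}, the o-minimal universal covering homomorphism of $\langle\phi(X')\rangle$. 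If $X$ is itself simply connected I would take $Y_0=X$ and run the same lift over $\phi(X)$, realizing $\mathcal{W}$ as a connected subgroup of the o-minimal universal covering group $\widetilde{\langle\phi(X)\rangle}$.
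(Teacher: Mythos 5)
Your construction tracks the paper's own proof step for step: shrink $X$ to simply connected sets $X_1,X_2$ with $X_2X_2\subseteq X_1$ using the filtration of $\mathcal{U}^{00}$ and saturation, lift $\phi(X_2)$ through $\pi$ via Proposition \ref{P:CovPties2}, let $\mathcal{W}$ be generated by the lift (connected by Proposition \ref{P:DefGenIsConn}), arrange genericity and the existence of $\mathcal{W}^{00}$, and feed the resulting local homomorphism into Theorem \ref{T:Z_t-f}. Your handling of part (ii) --- simple connectedness of $\mathcal{W}$ from Fact \ref{F:UnvCovIFFisS.C.} plus Fact \ref{F:Thm3.6} --- differs only cosmetically from the paper's (a second application of Theorem \ref{T:Z_t-f} together with uniqueness of extensions), and is fine. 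One small repair: Fact \ref{F:EqvU00-I} is about covering a \emph{definable} group, while your $V'=\left\langle \phi\left(X'\right)\right\rangle$ is only locally definable, and your claim that $\mathcal{W}^{00}$ ``lies in the identity sheet'' is asserted, not proved; the paper avoids both issues by applying Proposition \ref{P:R1}(ii) to the local homomorphism $\left(\pi\mid_{W'}\right)^{-1}\circ\phi$ into $\widetilde{\mathcal{V}}$, which directly gives that $\mathcal{W}^{00}$ exists and $\mathcal{W}^{00}\subseteq W'$.

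The genuine gap is the step you yourself flag as the heart of the matter: Theorem \ref{T:Z_t-f} requires $\mathcal{W}$ to be \emph{torsion free}, and your proposal does not prove this; moreover, the mechanism you propose cannot work. The property ``$\mathcal{W}^{00}$ is an intersection of $\omega$-many simply connected definable sets'' does not rule out torus factors of $\mathcal{W}/\mathcal{W}^{00}$: by \cite[Thm. 2.2]{Berar09} --- the very fact invoked in the proof of Theorem \ref{T:Goal1} --- every abelian definably compact definably connected definable group $G$ has $G^{00}$ equal to such an intersection, while $G/G^{00}$ is a full torus and $G$ has abundant torsion. So a simply connected filtration of the infinitesimal part is compatible with $r=\dim G>0$, and it cannot ``force $r=0$.'' Whatever yields torsion freeness must instead exploit that $\mathcal{W}$ sits inside the simply connected group $\widetilde{\mathcal{V}}$; your reduction to injectivity of $\pi_1\left(V'\right)\rightarrow\pi_1\left(\mathcal{V}\right)$ is a correct restatement of that problem, but it is exactly as hard as the torsion freeness itself and you give no argument for it. The economical fix, within the paper's toolkit, is available when $\mathcal{V}$ is divisible --- as it is in the intended application, Theorem \ref{T:Goal1}, where $\mathcal{V}=H\left(R\right)^{0}$ is an abelian definably connected definable group: Fact \ref{F:UniCovIsTF} then makes $\widetilde{\mathcal{V}}$ torsion free, and torsion freeness is inherited by the subgroup $\mathcal{W}$. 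To be fair, the paper's own write-up also invokes Theorem \ref{T:Z_t-f} without verifying this hypothesis, so you have correctly located a soft spot; but your proposal replaces the missing verification with a heuristic that is provably insufficient, so the proof as written does not close.
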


\begin{proof}

Since $\mathcal{U}^{00}$ is an intersection of $\omega$-many simply
connected definable subsets of $\mathcal{U}$, then $\mathcal{U}^{00}\subseteq X$ and saturation imply that there are simply connected definable sets $X_{1}$ and $X_{2}$ such
that $\mathcal{U}^{00}\subseteq X_{2}\subseteq X_{2}X_{2}\subseteq X_{1}\subseteq X$. Thus, by \cite[Remark 2.12]{BADefComp-I}, $\phi\mid_{X_{2}}$ is a local homomorphism in
both directions.

Moreover, the connected definable set $Y_{2}=\phi\left(X_{2}\right)$
is such that $Y_{2}Y_{2}\subseteq Y_{1}$ where $Y_{1}=\phi\left(X_{1}\right)$ and the identity of $\mathcal{V}$ belongs to $Y_{2}$. So Proposition \ref{P:CovPties2} yields the existence of a connected definable $W_{2}\subseteq\widetilde{\mathcal{V}}$
such that the identity of $\widetilde{\mathcal{V}}$ is in $W_2$ and $\pi\mid_{W_{2}}:W_{2}\rightarrow Y_{2}$ is a definable
homeomorphism and a local homomorphism in both directions, hence so is $\theta=\phi\mid_{X_{2}}^{-1}\circ\pi\mid_{W_{2}}$.

By saturation, $\mathcal{U}^{00}\subseteq X_{2}$, and Proposition \ref{P:R1}, then there is a connected
symmetric definable $X^{\prime}\subseteq X_{2}$ such that (i) $\mathcal{U}^{00}\subseteq X^{\prime}\subseteq\prod_{4}X^{\prime}\subseteq X_{2}$,
(ii) $W_{3}=\theta^{-1}\left(X^{\prime}\right)$ is generic in $\mathcal{W}=\left\langle W_{3}\right\rangle \leq\widetilde{\mathcal{V}}$,
$\mathcal{W}^{00}$ exists, and $\mathcal{W}^{00}\subseteq W_{3}$,
and (iii) $Y_{3}=\phi\left(X^{\prime}\right)$ is generic in $\left\langle Y_{3}\right\rangle \leq\mathcal{V}$,
$\left\langle Y_{3}\right\rangle ^{00}$ exists, and $\left\langle Y_{3}\right\rangle ^{00}\subseteq Y_{3}$. Note that $\mathcal{W}$ is connected by Proposition \ref{P:DefGenIsConn}.

By Theorem \ref{T:Z_t-f}, there are locally definable homomorphisms
$\overline{\theta}:\mathcal{W}\rightarrow\mathcal{U}$ and $\overline{\pi}:\mathcal{W}\rightarrow\left\langle Y_{3}\right\rangle \leq\mathcal{V}$
that are the o-minimal universal covering homomorphisms of $\mathcal{U}$ and $\left\langle Y_{3}\right\rangle$, respectively. Moreover, $\overline{\theta}:\mathcal{W}\rightarrow\mathcal{U}$ and $\overline{\pi}:\mathcal{W}\rightarrow\left\langle Y_{3}\right\rangle \leq\mathcal{V}$ are the unique extensions of $\theta\mid_{W_{3}}:W_{3}\rightarrow\mathcal{U}$ and $\pi\mid_{W_{3}}:W_{3}\rightarrow Y_{3}\subseteq\mathcal{V}$,
respectively, that are locally definable homomorphisms. Since $\overline{\pi}\mid_{W_{3}}=\pi\mid_{W_{3}}$,
then $\overline{\pi}=\pi\mid_{\mathcal{W}}$. If in addition
$X$ is simply connected, then  the o-minimal universal covering group $\widetilde{\left\langle \phi\left(X\right)\right\rangle }$
of $\left\langle \phi\left(X\right)\right\rangle$ exists, and $\mathcal{W}$
is a closed subgroup of $\widetilde{\left\langle \phi\left(X\right)\right\rangle}$.

\end{proof}

\subsection{The o-minimal universal covering group of an abelian connected definably compact semialgebraic group}

Applying the main results obtained so far, we present below one of the key results of this work.

\begin{thm}\label{T:Goal1}
Let $G$ be an abelian connected definably compact group definable
in a sufficiently saturated real closed field $R$. Then there are a connected $R$-algebraic group $H$,
an open connected locally definable subgroup $\mathcal{W}$ of the
 o-minimal universal covering group $\widetilde{H\left(R\right)^{0}}$ of $H\left(R\right)^{0}$, and
a locally definable homomorphism $\overline{\theta}:\mathcal{W}\subseteq\widetilde{H\left(R\right)^{0}}\rightarrow G$
that is the o-minimal universal covering homomorphism of $G$.
\end{thm}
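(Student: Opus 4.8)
The plan is to obtain Theorem \ref{T:Goal1} as a direct application of Theorem \ref{T:UnivCovAbLD}, feeding into it the algebraic group and the local homomorphism produced by \cite[Theorem 5.1]{BADefComp-I}. Concretely, I would first invoke \cite[Theorem 5.1]{BADefComp-I} to get a connected $R$-algebraic group $H$, a generic definable neighborhood $X$ of the identity of $G$, and an injective definable local homomorphism $\phi$ whose inverse is again a local homomorphism; after shrinking $X$ we may assume $\phi(X)\subseteq H(R)^0$ and that $\phi:X\to\phi(X)$ is a definable homeomorphism realizing $\phi(X)$ as a neighborhood of the identity of $H(R)^0$. Setting $\mathcal{U}=G$ and $\mathcal{V}=H(R)^0$, the quadruple $(\mathcal{U},\mathcal{V},X,\phi)$ is exactly the input of Theorem \ref{T:UnivCovAbLD}, so the entire proof reduces to checking that theorem's standing hypotheses on $\mathcal{U}$, $\mathcal{V}$, and $\mathcal{U}^{00}=G^{00}$.

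I would then verify those hypotheses. First, $H(R)^0$ must be abelian: since $\phi$ is a local homomorphism in both directions and $G$ is abelian, for $a,b$ in a subneighborhood $X_2$ with $X_2X_2\subseteq X$ one has $\phi(a)\phi(b)=\phi(ab)=\phi(ba)=\phi(b)\phi(a)$, so $\phi(X_2)$ is a pairwise-commuting open neighborhood of the identity of $H(R)^0$; as $H(R)^0$ is semialgebraically connected it is generated by this neighborhood and is therefore abelian, and $\pi:\widetilde{H(R)^0}\to H(R)^0$ exists by Section \ref{S:3}. Second, I must check that $G^{00}$ exists and is an intersection of $\omega$-many simply connected definable subsets of $G$. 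Existence of $G^{00}$ follows from definable compactness, and by the structure theorem $G/G^{00}$ is a compact connected abelian Lie group, hence a torus $\mathbb{T}^r$. Because $G$ is semialgebraic (definable in a real closed field, so in the countable language of ordered rings), $G^{00}$ is type-definable over finitely many parameters and is thus a countable intersection of definable supersets; using a countable neighborhood basis of the identity in $\mathbb{T}^r\cong G/G^{00}$ together with a definably contractible (hence simply connected) neighborhood basis of the identity in $G$, this family can be refined to a decreasing chain of $\omega$-many simply connected definable sets containing $G^{00}$ with intersection exactly $G^{00}$.

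With the hypotheses in place, Theorem \ref{T:UnivCovAbLD}(i) delivers a connected locally definable subgroup $\mathcal{W}\leq\widetilde{H(R)^0}$ and a locally definable homomorphism $\overline{\theta}:\mathcal{W}\to G$ that is the o-minimal universal covering homomorphism of $G$, which is the assertion. To obtain the word \emph{open} in the statement, I would recall that in the construction $\mathcal{W}=\langle W_3\rangle$ is generated by a generic definable $W_3$ lifting, through the homeomorphism $\pi\mid_{W_2}$ of Proposition \ref{P:CovPties2}, a full-dimensional definable neighborhood of the identity of $G$; since $\dim G=\dim H(R)^0$ (so that $\phi(X)$ is a neighborhood of the identity and $\langle\phi(X)\rangle=H(R)^0$), the set $W_3$ has nonempty interior in $\widetilde{H(R)^0}$, and a subgroup containing an open set is open, while connectedness of $\mathcal{W}$ is Proposition \ref{P:DefGenIsConn}.

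The step I expect to be the main obstacle is the verification that $G^{00}$ is an intersection of $\omega$-many \emph{simply connected} definable sets. The countability of the intersection is essentially automatic in the semialgebraic setting, but producing simply connected approximants requires a definably contractible neighborhood basis of the identity and control of how these neighborhoods interact with the pullbacks of a neighborhood basis of $G/G^{00}\cong\mathbb{T}^r$; by contrast, the abelianness of $H(R)^0$ and the openness of $\mathcal{W}$ are comparatively routine once the local isomorphism of \cite[Theorem 5.1]{BADefComp-I} and the connectedness of $H(R)^0$ are in hand.
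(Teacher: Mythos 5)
Your overall route is the same as the paper's: invoke \cite[Theorem 5.1]{BADefComp-I} to get $H$, $X$, $\phi$, feed the data into Theorem \ref{T:UnivCovAbLD} with $\mathcal{U}=G$ and $\mathcal{V}=H(R)^0$, and recover openness of $\mathcal{W}$ from $\dim(G)=\dim(H(R))=\dim(\mathcal{W})$ (full-dimensional locally definable subgroups are open), which is exactly how the paper concludes. Your explicit check that $H(R)^0$ is abelian (pairwise commutation of $\phi(X_2)$ plus the fact that a connected group is generated by, and hence abelian on account of, any commuting neighborhood of the identity) is a reasonable supplement that the paper leaves implicit.

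However, there is a genuine gap exactly where you anticipated one: the verification that $G^{00}$ is an intersection of $\omega$-many \emph{simply connected} definable subsets of $G$. The paper does not prove this; it quotes it as a known theorem, namely \cite[Thm.~2.2]{Berar09}. Your replacement argument establishes only the easy half (countability of the intersection, via type-definability of $G^{00}$ over the parameters of $G$ in a countable language), and the proposed refinement to simply connected approximants does not work as described. First, a ``definably contractible neighborhood basis of the identity in $G$'' consists of sets shrinking to $\{e\}$; its members need not contain $G^{00}$ at all (over non-archimedean parameters there are definable neighborhoods of $e$ of infinitesimal radius, which miss most of $G^{00}$), so intersecting or ``refining'' the definable supersets $D_n\supseteq G^{00}$ with such neighborhoods would cut into $G^{00}$ rather than approximate it. Second, the preimages in $G$ of a neighborhood basis of the identity of $G/G^{00}\cong\mathbb{T}^r$ are not definable --- they are only type-definable or open in the logic topology --- so they cannot serve as the definable terms of the intersection either. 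What is actually needed is a squeeze: for every definable $D\supseteq G^{00}$, a definable \emph{simply connected} $D'$ with $G^{00}\subseteq D'\subseteq D$. That statement is precisely the nontrivial content of Berarducci's theorem, and nothing in your sketch produces it; as written, this step would fail, so you should either cite \cite[Thm.~2.2]{Berar09} (as the paper does) or supply a genuine proof of the squeeze, which is substantially harder than the rest of the argument.
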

\begin{proof}
By \cite[Theorem 5.1]{BADefComp-I}, there are a connected $R$-algebraic group $H$ such that $\dim\left(G\right)=\dim\left(H\left(R\right)\right)$, a definable set $X\subseteq G$ such that $G^{00}\subseteq X$, and a definable homeomorphism
$\phi:X\subseteq G\rightarrow\phi\left(X\right)\subseteq H\left(R\right)$
that is also a local homomorphism.

By \cite[Thm. 2.2]{Berar09}, $G^{00}$ is an intersection of $\omega$-many simply
connected definable subsets of $G$. Thus, by Theorem \ref{T:UnivCovAbLD}, there are a connected locally definable
subgroup $\mathcal{W}\leq\widetilde{H\left(R\right)^{0}}$ and a locally definable homomorphism
$\overline{\theta}:\mathcal{W}\subseteq\widetilde{H\left(R\right)^{0}}\rightarrow G$
that is the o-minimal universal covering homomorphism of $G$, and
since $\dim\left(G\right)=\dim\left(\mathcal{W}\right)$, $\mathcal{W}$
is also open in $\widetilde{H\left(R\right)^{0}}$.
\end{proof}

\section*{Acknowledgements}
I would like to express my gratitude to the Universidad de los Andes,
Colombia and the University of Haifa, Israel for supporting and funding my research as well as for their stimulating hospitality. I would also like to thank warmly to my advisors: Alf Onshuus and Kobi
Peterzil for their support, generous ideas, and kindness during this work.

I want to express my gratitude to Anand Pillay for suggesting to Kobi Peterzil the problem discussed in this work: the study of semialgebraic groups over a real closed field. Also, thanks to the Israel-US Binational Science Foundation for their support.

The main results of this paper have been presented on the winter of 2016 at the Logic Seminar of the Institut Camille Jordan, Université Claude Bernard - Lyon 1 (Lyon) and at the Oberseminar Modelltheorie of the Universit{\"a}t Konstanz (Konstanz).

\nocite{ErraEdCov,BaEdErrata,PetStein99,HP2,TentZie,Hodges}
\bibliographystyle{plain}
\bibliography{IP}

\end{document}